\documentclass[12pt]{article}
\textwidth= 7.5in
\textheight= 10.in
\topmargin = -1.in
\evensidemargin= -0.5in
\oddsidemargin= -0.5in
\headsep=17pt
\parskip=.5pt
\parindent=12pt

\usepackage{amssymb}
\usepackage{graphicx}
\usepackage{float}
\usepackage{enumerate}
\usepackage{amsmath,amssymb,amsbsy,amsfonts,amsthm,latexsym}
\usepackage{amsopn,amstext,amsxtra,euscript,amscd}

\newtheorem{theorem}{Theorem}
\newtheorem*{theorem*}{Theorem}
\newtheorem{corollary}{Corollary}
\newtheorem{lema}{Lemma}
\newtheorem{definition}{Definition}
\newtheorem{proposition}{Proposition}

%\newenvironment{proof}[1][Proof]{\noindent\textbf{#1.} }{\ \rule{0.5em}{0.5em}}

%%%%%%%%%%%%%%%%%%%%%%%
% Alphabet blackboard %
%%%%%%%%%%%%%%%%%%%%%%%

%%%%%%%%%%%%%%%%%%%%%%%%%%%%%%%%%%%%%%%%%%%
%%%%%%%%%%%%%%%%%%%%%%%%%%%%%%%%%%%%%%%%%%

\def\ln#1{\langle\langle #1\rangle\rangle}
\def\ls#1{\langle #1 \rangle}

\def\ignor#1{}
%%%%%%%%%%%%%%%%%%

\begin{document}

\title{Almost Congruence Extension Property for subgroups of free groups}
\author{Lev Glebsky; Nevarez Nieto Saul}
\date{}
\maketitle

\begin{abstract}
Let $H$ be a subgroup of $F$ and $\left\langle \left\langle H\right\rangle \right\rangle _F$ denote 
the normal closure of $H$ in $F$. We say that $H$ has Almost Congruence Extension Property (ACEP) in $F$ if 
there is a finite set of nontrivial elements $\digamma \subset H$ such that for any normal subgroup $N$ of $H$ 
one has $H\cap \left\langle \left\langle N\right\rangle \right\rangle _F=N$ whenever $N\cap \digamma =\emptyset$.
In this paper we provide a sufficient condition for a subgroup of a free
group not possess ACEP. We also show
that any finitely generated subgroup of a free group satisfies some 
generalization of ACEP.\footnote{Part of this work was done at 
the Erwin Schr\"odinger Institute in Vienna, January-March 2016, 
during the Measured Group 
Theory program and was partially supported by 
the European Research Council (ERC) grant no. 259527 of
G. Arzhantseva. Part of this work was done at the Nizhny Nivgorod University 
and supported by the RSF (Russia) grant 14-41-00044. The authors are grateful to
the referee for useful suggestion and bringing \cite{13} to our attention.} 
\end{abstract}

\section{Introduction}
 Let $X$ be a group. We use the following standard notations:  $Y<X$ for ``Y is a subgroup of X'',
$Y\triangleleft X$ for ``$Y$ is a normal subgroup of $X$'', $\ls{Y}$ for ``the subgroup generated by $Y$, 
$\ln{Y}_X$ for ``the normal closure of $Y$ in $X$''.
(In the last two cases $Y\subseteq X$.)

\begin{definition}\label{def1}
Let $F$ be a group. A subgroup $H$ of $F$ has Congruence Extension Property (CEP) if, for any normal subgroup $N$ 
of $H$, one has 
$H\cap\left\langle \left\langle N\right\rangle \right\rangle _F=N$. 
\end{definition}

The CEP is also known by different names. Ol'shanskiĭ \cite{4} calls  it property $F(n)$, 
B. H. Newmann \cite{11} names subgroups with the CEP as E-subgroups. Stailings \cite{12} calls 
them normal convex subgroups. Finally, Osin \cite{1} introduces the self-explaining name CEP. It is worth 
mentioning that the term CEP  was used before for subalgebras and subsemigroups. 
The natural question is: when does a subgroup $H$ of a group $F$ possess CEP?
The question seems difficult, even when $H$ is a finitely generated subgroup of a free group $F$.
Particularly, its algorithmic decidability is, as far as we know, an open question.  
An obvious example of a subgroup with the CEP is a free factor of $F$, and a nontrivial example 
is given in \cite{5}. Osin \cite{1} introduced the following definition.

\begin{definition}\label{def2}
Let $F$ be a group. A subgroup $H$ of $F$ has almost CEP (ACEP) if there is a finite set of nontrivial 
elements $\digamma \subset H$ such that for any $N\vartriangleleft H$ one has 
$H\cap \left\langle \left\langle N\right\rangle \right\rangle _F=N$ whenever $N\cap \digamma =\emptyset$.
\end{definition}
Henceforth we will use $1$ to denote the identity element of the group.
An equivalent definition of ACEP is:

\begin{definition}\label{def3}
A subgroup $H$ of a group $F$ has ACEP if there is a finite set of nontrivial elements $\digamma \subset H$ such that any epimorphism $\theta$ from $H$ to 
any group $G$ can be extended to an epimorphism $\theta ^{\ast}:F\longrightarrow G^{\ast }>G$, 
whenever $\theta (\alpha )\neq 1$, $\forall\;\alpha \in \digamma$.
\end{definition}

ACEP is a natural and interesting property; moreover, it is easier to find some criteria for determining when a subgroup has ACEP. 
Our starting point is the following sufficient condition given by D. Osin in \cite{1}. 

\begin{definition} \label{def_maln}
Let $H$ be a subgroup of a group $F$, $H^{a}=\{a^{-1}ha\mid h\in H\}$ with $a\in F$. We say that $H$ is a malnormal subgroup of $F$ if $\forall\; a\in F\backslash H$
$$
H\cap H^{a}=\{1\}.
$$
We say that $H$ is cyclonormal if $\forall\; a\in F\backslash H$,
$$
H\cap H^{a}
$$
is a cyclic subgroup.
\end{definition}

\begin{theorem}[See \cite{1}]\label{1}
Any finitely generated malnormal subgroup of a free group possesses ACEP.
\end{theorem}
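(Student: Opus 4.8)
The plan is to translate everything into the language of Stallings core graphs and to extract from malnormality a bounded-overlap (small cancellation) condition that holds automatically once the normal subgroup $N$ avoids all ``short'' elements of $H$. Write $F$ as the free group on a basis $X$. Since $F$ is free and $H$ is finitely generated, $H$ is the fundamental group of a finite folded core graph $\Gamma$ with a basepoint $*$, so that a reduced word $w$ represents an element of $H$ precisely when it labels a reduced loop at $*$ in $\Gamma$. First I would set $\digamma$ to be the (finite) set of all nontrivial elements of $H$ whose reduced length is at most a constant $M$, with $M$ chosen large compared with the size of $\Gamma$; finiteness of $\digamma$ is immediate because a free group has only finitely many reduced words of any bounded length. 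The hypothesis $N\cap\digamma=\emptyset$ then forces every nontrivial element of $N$ to have length greater than $M$.

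The geometric heart of the argument is the passage from malnormality to control on overlaps in $\Gamma$. Using the fiber product $\Gamma\times_X\Gamma$ (pairs of edges with a common label), malnormality is equivalent to saying that every connected component off the diagonal is a finite tree; since $\Gamma$ is finite these trees have bounded diameter, so there is a constant $D$, depending only on $\Gamma$, with the following property: if a reduced word $u$ of length at least $D$ labels a path in $\Gamma$ starting at $p_1$ and also a path starting at $p_2$, then $p_1=p_2$ and the two paths coincide. Consequently, a long common subword $u$ of two elements of $N\subseteq H$ has its two occurrences forced to align inside $\Gamma$, and reading from $*$ to the start of such an overlap traces a path that stays in $\Gamma$, hence is realized by an element of $H$. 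Combined with the fact that all nontrivial elements of $N$ are longer than $M\gg D$, this shows that $N$, viewed as a symmetrized set of relators, has pieces of length at most $D$ and therefore satisfies a metric small cancellation condition $C'(1/6)$ as soon as $M>6D$.

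With this in place I would prove the nontrivial inclusion $H\cap\langle\langle N\rangle\rangle_F\subseteq N$ (the reverse inclusion being automatic). Take $1\neq h\in H\cap\langle\langle N\rangle\rangle_F$ and a reduced van Kampen diagram over $\ls{X\mid N}$ with boundary label $h$ and fewest $2$-cells. Greendlinger's lemma then supplies a $2$-cell whose boundary shares with $\partial D$ an arc carrying more than half of a relator $r\in N$. Because both $h$ and $r$ label loops at $*$ in $\Gamma$ and their common arc is long, the alignment from the previous step shows that the element $c\in F$ read along $h$ from $*$ to the start of the arc actually lies in $H$. Hence the conjugate $c\,r^{\pm1}c^{-1}$ that shortens $h$ lies in $N$, because $N\triangleleft H$; replacing $h$ by $h'=c\,r^{\mp1}c^{-1}\,h$ keeps us inside $H$ and alters $h$ only by an element of $N$. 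Since $h'$ is strictly shorter, iterating reduces $h$ to the identity within a single coset of $N$, whence $h\in N$.

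The step I expect to be the main obstacle is exactly the claim that the conjugator $c$ may be taken inside $H$, so that each shortening multiplies $h$ by an element of $N$ and not merely by an element of $\langle\langle N\rangle\rangle_F$. This is where malnormality is indispensable: without the bounded-overlap property a long common arc between $h$ and a relator could leave $\Gamma$, the conjugator $c$ would be only an element of $F$, and the reduction would yield $h\in\langle\langle N\rangle\rangle_F$ with no mechanism keeping it in $N$. Making the overlap alignment precise, and verifying that the small cancellation hypothesis survives cyclic reduction of the relators in $N$, are the technical points that will require the most care.
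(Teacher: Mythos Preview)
Your overall strategy---Stallings graphs, the fibre product $\Gamma\times_X\Gamma$ to extract a uniform overlap bound $D$ from malnormality, van Kampen diagrams over $\langle X\mid N\rangle$, and a Greendlinger-type shortening---is essentially the route the paper takes to prove Theorem~\ref{th_acep21} and then specialize to malnormal $H$. Two steps, however, do not go through as you have written them.

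First, the assertion that $N$ satisfies $C'(1/6)$ as a symmetrized relator set is false. Your bound $D$ controls only words that label two \emph{distinct} paths in $\Gamma$; it says nothing when the common subword of two relators traces the \emph{same} path in $\Gamma$. For instance, if $r\in N$ is cyclically reduced then so is $r^{2}\in N$, and they share the prefix $r$, so pieces can be as long as an entire relator regardless of how large $M$ is. The paper handles this by working in a diagram with the fewest regions and distinguishing \emph{essential} pieces (those projecting to two different paths in $\Gamma(H)$, hence of bounded length by Corollary~\ref{cor_length}) from \emph{inessential} ones; Lemma~\ref{lm_ess_diff} shows that an inessential piece between two distinct regions cannot occur in an optimal diagram, and inessential self-boundary pieces are eliminated by a separate surgery. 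Only after these reductions does the diagram become a $(3,6)$ diagram to which the Greendlinger-type theorem applies.

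Second, the claim that the conjugator $c$ lies in $H$ is not what alignment gives you. The prefix $c$ of $h$ labels a path in $\Gamma$ from $*$ to some vertex $v$; alignment forces $v$ to coincide with the basepoint of the \emph{region's} cyclically reduced relator, which need not be $*$, so in general $c\notin H$ and the appeal to $N\triangleleft H$ is unavailable. What one actually needs is that $c\,r'\,c^{-1}\in N$ for the relevant cyclic conjugate $r'$, and the paper obtains this not from $c\in H$ but by introducing the covering $f:\Gamma(N)\to\Gamma(H)$ of Proposition~\ref{8}, lifting the $c$-path to $\Gamma(N)$ so that it terminates at the basepoint of the cycle labelled $r'$, and observing that the concatenated cycle then sits over $1_H$. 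With these two repairs your outline becomes the paper's proof.
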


The theorem is Corollary 1.5 of \cite{1} and it is, indeed, a corollary of the main result 
of \cite{1}. The purpose of our paper is twofold: 
\begin{itemize} 
\item We provide a sufficient condition for a subgroup of a free group 
not possess ACEP. This condition and the theorem above allow us to "almost decide" when 
a finitely generated subgroup (f.g.s.) of a free group has ACEP. Still, we are not able to
provide an algorithm to decide when an f.g.s. of a free group has ACEP. 
\item We give further proof of Theorem~\ref{1}, different from the one provided by \cite{1}. 
Our proof uses the small cancellation theory and also leads to a more general result,
Theorem~\ref{th_acep21}. Theorem~\ref{th_acep21} is not a direct consequence of the results of
\cite{1} but may be considered as a particular case of Theorem~7.15 in \cite{13}.
Our proof gives explicit estimates of some parameters, 
see Subsection~\ref{proof_sub3}. It is not clear how to deduce these estimates from (proof of) 
Theorem~7.15 of \cite{13}.
Ultimately we show how Theorem~\ref{th_acep21} follows from results of \cite{13}.         
\end{itemize}
In the following subsections of the Introduction we describe the above mentioned items 
in more detail. Henceforth $F$ is a free group and $H$ is a finitely generated subgroup of $F$.
\subsection{Subgroups with and without ACEP.}\label{intr_sub1}
$H$ falls into one of the following categories:
\begin{enumerate}
\item $H$ is malnormal.
\item $H$ is not cyclonormal, i.e. $\exists\; a\in F\backslash H$ such that 
rank($H\cap H^{a}$)$\geq 2$.
\item $H$ is cyclonormal and $\exists\; a\in F\backslash H$ such that $H\cap H^{a}$ 
is generated by $u,$ where $u$ is not a proper power in $F$.
\item $H$ is cyclonormal but not malnormal, and $\forall\; a\in F\backslash H$ 
$H\cap H^{a}$ is either trivial or generated by a proper power.
\end{enumerate}

\begin{theorem}\label{13}
In the first case $H$, has ACEP; in the second and the third cases, $H$ does not have ACEP. 
In the fourth case, there are subgroups with ACEP and subgroups without ACEP.
\end{theorem}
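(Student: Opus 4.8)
The plan is to handle the four cases by two mechanisms: Case~1 is immediate, Cases~2 and~3 share a single non-ACEP construction, and Case~4 is settled by two explicit examples. For Case~1, since $H$ is finitely generated and malnormal, Theorem~\ref{1} applies verbatim and $H$ has ACEP, so nothing further is needed.

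For Cases~2 and~3 the first step is to isolate a single element $u\in H\cap H^{a}$ that is \emph{not} a proper power in $F$. In Case~3 this is handed to us. In Case~2, where $\rank(H\cap H^{a})\ge 2$, I would invoke the standard fact that a non-cyclic subgroup of a free group cannot consist entirely of proper powers, producing the desired $u$. In either situation $u\in H$ and, since $u\in H^{a}=a^{-1}Ha$, the element $h:=aua^{-1}$ also lies in $H$; thus $u$ and $h$ are two honest elements of $H$ that are conjugate in $F$ via $a$. The key algebraic observation is that $h$ is \emph{not} conjugate in $H$ to any power $u^{j}$: comparing cyclically reduced lengths forces $|j|=1$; the case $j=-1$ is impossible because in a free group no nontrivial element is conjugate to its inverse (free groups are bi-orderable); and $j=1$ would give $a^{-1}g\in C_{F}(u)=\ls{u}\subseteq H$ for some $g\in H$, whence $a\in H$, contradicting $a\notin H$.

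The heart of the proof is then to defeat an arbitrary finite set $\digamma\subset H\setminus\{1\}$. I would set $N=\ln{u^{k}}_H$ for $k$ large and study $\overline{H}=H/N$. Regarding $H$ as free and $u$ as not a proper power in $H$ (a consequence of it not being one in $F$), the relator $u^{k}$ satisfies $C'(1/6)$ once $k$ is large, so $\overline{H}$ is a small cancellation one-relator group with torsion. Two small cancellation facts then finish the argument. First, a Greendlinger/spelling lower bound shows every nontrivial element of $N$ has word length growing with $k$, so for $k$ large one has $N\cap\digamma=\emptyset$. Second, $\bar u$ has order exactly $k$, while I claim $\bar h$ has infinite order; granting this, $u^{k}\in N$ gives $h^{k}=au^{k}a^{-1}\in\ln{N}_F$ and hence $h^{k}\in H\cap\ln{N}_F$, yet $\bar h^{k}\neq 1$ gives $h^{k}\notin N$, so $H\cap\ln{N}_F\neq N$ and ACEP fails. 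Equivalently, in the language of Definition~\ref{def3}, the quotient map $H\to\overline{H}$ cannot be extended over $F$, because $u,h$ are $F$-conjugate while $\bar u,\bar h$ have different orders and orders are preserved under the inclusion $\overline{H}\hookrightarrow G^{*}$. The main obstacle is precisely the claim that $\bar h$ has infinite order: I would derive it from the torsion theorem for $C'(1/6)$ quotients, which says every torsion element of $\overline{H}$ is a conjugate of a power of $\bar u$, combined with the non-conjugacy of $h$ to powers of $u$ established above.

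Finally, for Case~4 the mechanism above is genuinely obstructed: here $H\cap H^{a}=\ls{w}$ with $w=u_{0}^{m}$ a proper power and $u_{0}\notin H$, so $C_{F}(w)=\ls{u_{0}}\not\subseteq H$ and the step forcing $a\in H$ breaks down. Whether $H$ has ACEP is then governed by whether the conjugator can be absorbed, i.e.\ whether $a\in H\ls{u_{0}}$. I would exhibit two explicit finitely generated subgroups in this class: one in which $a\in H\ls{u_{0}}$, so that $h$ becomes $H$-conjugate to $w$, the obstruction disappears, and $H$ can be shown to have ACEP; and one in which $a\notin H\ls{u_{0}}$, so that the Case~2/3 construction still applies and ACEP fails. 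Verifying these two examples by direct word combinatorics (via Stallings foldings) is the computational, and least conceptual, part of the argument.
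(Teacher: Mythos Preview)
Your overall architecture for Cases~2 and~3 is exactly the one the paper uses: produce $u,h\in H$ that are $F$-conjugate but not $H$-conjugate to each other's $\pm 1$ power (the paper calls this the $S$-subgroup property), and then defeat any finite $\digamma$ by passing to normal closures of large powers. Your derivation that $h$ is not $H$-conjugate to $u^{\pm 1}$ is the same as the paper's Lemma~\ref{4} and Corollary~\ref{12}.

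There is, however, a genuine gap in the step where you conclude $h^{k}\notin N=\ln{u^{k}}_H$. The torsion theorem only tells you that a torsion element of $\overline H$ is conjugate \emph{in $\overline H$} to a power of $\bar u$. Conjugacy in $\overline H$ does not lift to conjugacy in $H$, so the non-conjugacy of $h$ to powers of $u$ in $H$ that you established earlier gives no contradiction. One can rescue this with an additional small-cancellation conjugacy lemma (bounding the annular diagram when both words are short relative to the relator), but as written the inference is incomplete.

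The paper sidesteps the issue neatly by \emph{not} trying to show $h^{k}\notin\ln{u^{k}}_H$. Instead it compares the two normal closures $N_u=\ln{u^{n}}_H$ and $N_h=\ln{h^{n}}_H$: Magnus's theorem (Theorem~\ref{11}(a)) gives $N_u\neq N_h$ because $u^{n}$ is not $H$-conjugate to $h^{\pm n}$, while $\ln{N_u}_F=\ln{N_h}_F$ since $u$ and $h$ are $F$-conjugate. Hence at least one of $N_u,N_h$ satisfies $N\neq H\cap\ln{N}_F$. Both avoid $\digamma$ for large $n$ by the Newman spelling bound (Lemma~\ref{2}), so ACEP fails. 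This symmetric argument is shorter and avoids any appeal to torsion or conjugacy in the quotient.

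For Case~4, your plan for the non-ACEP example is fine in outline, but the ACEP example needs more than ``the obstruction disappears'': the absence of the $S$-property does \emph{not} imply ACEP (the paper later exhibits a non-$S$ subgroup without ACEP, Proposition~\ref{example}). The paper's ACEP example is $H_2=\ls{x^{n},y^{n}}$, and the proof that it even has CEP is not combinatorial bookkeeping but an appeal to Levin's theorem that equations $z^{n}=c$ are solvable over any group; you should expect to need a comparable ingredient.
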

Our proof is not difficult and uses that in the second and the third cases
$H$ satisfies the property:
\begin{description}
\item[(S)] 
there exists $w\in H$ such that $w^H\neq w^F\cap H$. (Here $w^X$ denotes the conjugacy
class of $w$ in $X$.)
\end{description}
We see that subgroups of free groups with the property {\bf (S)} do not have ACEP. 
In the forth case there are subgroups with and without the property {\bf (S)}. 
Then we continue the study of the property {\bf (S)} in Subsection~\ref{case4sub2}, 
where
a subgroup not having neither ACEP nor property {\bf (S)} is constructed (see Proposition \ref{example}).  
We also show the  algorithmic decidability of the property {\bf (S)}. 
   
\subsection{Theorem~\ref{1} and its generalization}\label{intr_sub2}

For $N\triangleleft H$, let $\gamma(N)=\min\{|w|,\;|\;w\in N\setminus\{1\}\}$.
Here $|w|$ denotes the length of the reduced word, representing $w\in F$. It is clear that
the following are equivalent:
\begin{itemize}
\item $H$ has ACEP. 
\item There exists $C_H>0$ such that $N=\ln{N}_F\cap H$ whenever $\gamma(N)>C_H$.
\end{itemize} 
Now, we are going to define $|\cdot |_H$, another length function on a free group $F$, 
to define $\gamma_H(N)$, and to formulate a generalization of Theorem~\ref{1}.

\begin{definition}\label{def_length}
Let $F=F(X)$ be a free group on a set $X$ and $\Omega =\{H_i\mid H_i<F,i=1,...,n\}$. 
Let $w\in F$.
We define 
$\left\vert w\right\vert _{\Omega }=\min \{k\mid w=w_1w_2\cdots w_k$, $w_i\in H_j$ or 
$w_i\in X \cup X^{-1}\}$,
i.e., $\left\vert w\right\vert _{\Omega }$ is the word metric with respect to a generating set 
$\left(\bigcup\Omega\right)\cup X\cup X^{-1}$. 
\end{definition}

We associate a special length function with a finitely 
generated $H<F$. To this end we define    
$\Omega(H)\subseteq \{H^a\cap H^b\;|\; Ha\neq Hb\}$, put $|w|_H=|w|_{\Omega(H)}$ and
$\gamma_H(N)=\min\{|w|_H\;\;|\;\;w\in N\setminus \{1\}\}$. 
We postpone the exact definition of $\Omega(H)$ to Subsection~\ref{sub_length} but mention
here two of its important properties: 
\begin{enumerate}
\item $\Omega(H)$ is finite;
\item for each nontrivial $X\in\{H^a\cap H^b\;|\; Ha\neq Hb\}$ there is $Y\in\Omega(H)$ conjugate
to $X$ in $F$.
\end{enumerate}  

\begin{theorem}\label{th_acep21}
Let $H$ be an f.g.s. of $F$. There exists 
$C_H$ such that $N=\langle\langle N\rangle\rangle_F\cap H$ whenever 
$N\triangleleft H$ and $\gamma_H(N)>C_H$.
\end{theorem}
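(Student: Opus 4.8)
The plan is to prove the equivalent assertion that the natural homomorphism $H/N\to F/\ln{N}_F$ induced by the inclusion $H\hookrightarrow F$ is injective; this is exactly the inclusion $H\cap\ln{N}_F\subseteq N$, the reverse inclusion being trivial. I would set this up as a small cancellation problem over $F$ for the symmetrised relator set $N\setminus\{1\}$, but with the crucial feature that \emph{length is measured in $|\cdot|_H$} rather than in the word metric. Throughout I work in the folded Stallings core graph $\Gamma=\Gamma_H$ with basepoint $*$, so that $w\in F$ lies in $H$ iff $w$ reads a loop at $*$, and every $w\in N$ is a cyclic loop in $\Gamma$.

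The decisive ingredient is a \textbf{bounded piece lemma}: there is a constant $c_0=c_0(H)$ such that whenever a reduced word $u$ labels two \emph{distinct} paths of $\Gamma$, one has $|u|_H\le c_0$. Indeed, a double reading of $u$ is precisely a path in the fibre product $\Gamma\times_R\Gamma$ over the rose $R$ that leaves the diagonal, and the loops of the core of this (finite, since $H$ is finitely generated) fibre product are exactly the intersection subgroups $H^a\cap H^b$ with $Ha\neq Hb$. Using that $\Omega(H)$ is finite and that every nontrivial such intersection is $F$-conjugate into $\Omega(H)$ by a uniformly bounded conjugator, one decomposes $u$ into boundedly many factors each lying in some $Y\in\Omega(H)$ up to a bounded letter-conjugator, whence $|u|_H\le c_0$. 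I then set $C_H:=6c_0$.

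Now take $1\neq h\in H\cap\ln{N}_F$ and a reduced van Kampen diagram $D$ over $\ls{X\mid N}$ with $\partial D$ labelled by $h$, and argue by induction on the number of cells. A Greendlinger-type lemma, applied in the $|\cdot|_H$ metric (legitimate because $\gamma_H(N)>C_H=6c_0$ makes every piece shorter than $\tfrac16$ of every relator, i.e.\ $C'(1/6)$ holds), yields an outer cell whose boundary $r=uv\in N$ has its subword $u$, with $|u|_H>\tfrac12|r|_H>c_0$, lying on $\partial D$; thus $u$ is a subword of $h$, read in $\Gamma$ starting from the vertex $w_0=*\!\cdot\alpha$, where $\alpha$ is the preceding prefix of $h$. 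Were $w_0\neq *$, the prefix of the relator (which reads $u$ from $*$) and the subword of $h$ would give two \emph{distinct} $u$-labelled paths of $\Gamma$, so $|u|_H\le c_0$ by the bounded piece lemma, contradicting $|u|_H>c_0$. Hence $w_0=*$, i.e.\ $\alpha\in H$, so $\alpha r\alpha^{-1}\in N$ as $N\triangleleft H$. Replacing $u$ by $v^{-1}$ turns $h=\alpha u\beta$ into $h'=\alpha v^{-1}\beta=(\alpha r\alpha^{-1})^{-1}h$, which lies in $H$, is still in $\ln{N}_F$, satisfies $h'\equiv h\pmod N$, and bounds a diagram with fewer cells. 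By induction $h'\in N$, hence $h\in N$; the base case with no cells is $h=1\in N$.

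The step I expect to be the main obstacle is making the Greendlinger machinery run in the $|\cdot|_H$ metric with fully uniform bookkeeping: one must derive the metric condition $C'(1/6)$ from the bounded piece lemma, track how attaching vertices and the bounded conjugators behave under the fibre product, and keep the constants explicit so that $c_0$, and hence $C_H$, can be computed as promised in Subsection~\ref{proof_sub3}. Establishing the bounded piece lemma in the sharp form $|u|_H\le c_0$ — in particular bounding the conjugators relating an arbitrary $H^a\cap H^b$ to a member of $\Omega(H)$ — is where the real work, and the dependence on the precise definition of $\Omega(H)$, will concentrate.
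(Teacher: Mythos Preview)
Your overall strategy coincides with the paper's: a bounded-piece lemma (the paper's Lemma~\ref{lm_length} and Corollary~\ref{cor_length}, proved via the off-diagonal fibre product $\Gamma(H)\dot{\times}\Gamma(H)$) feeds a small-cancellation argument that peels off a boundary cell and reduces the number of regions. However, the key reduction step contains a genuine gap.

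You assert that the face label $r=uv$ ``reads $u$ from $*$'' in $\Gamma$, and hence that $w_0\neq *$ would produce two distinct $u$-labelled paths. This is not justified: a face label is only a \emph{cyclically reduced conjugate} of an element of $N$; it labels a cycle in $\Gamma(H)$ based at some vertex $v''$ which need not be the basepoint. The bounded-piece lemma therefore only forces $w_0=v''$, so that $\alpha r\alpha^{-1}$ traces a loop at $1_H$ in $\Gamma(H)$ and hence lies in $H$---but \emph{not} a priori in $N$, and your appeal to ``$N\triangleleft H$'' is idle since $r$ itself need not lie in $N$ (nor even in $H$). The paper closes exactly this gap by passing to the covering $\Gamma(N)\to\Gamma(H)$ of Proposition~\ref{8}: the face comes from a cycle $A$ in $\Gamma(N)$, one lifts the path $P(1_H,v'',\alpha)$ so as to end on $A$, and then $\alpha r\alpha^{-1}$ labels a cycle in $\Gamma(N)$ based in $f^{-1}(1_H)$, whence $\alpha r\alpha^{-1}\in N$. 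Without this lifting step (or an equivalent device) your induction does not move.

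Two smaller points the paper handles and you do not. First, a cyclically reduced relator $w$ satisfies only $|w|_H\ge \gamma_H(N)-2\,\mathrm{diam}(\Gamma(H))$ (Lemma~\ref{9}), not $|w|_H\ge\gamma_H(N)$, so the constant $C_H=6c_0$ must be enlarged accordingly; the paper takes $C_H=6C+2\,\mathrm{diam}(\Gamma(H))$. Second, even an optimal diagram can contain \emph{inessential} pieces lying on the self-boundary of a single region $D$ with $\delta(D)=Q_1PQ_2P^{-1}$; such pieces are not bounded by Corollary~\ref{cor_length}, so the $(3,6)$/Greendlinger machinery does not apply directly. The paper treats this as a separate case in Subsection~\ref{proof_sub3}, again using the covering $\Gamma(N)$ to show that either the inner subdiagram already gives a smaller counterexample, or the region $D$ can be replaced by one with boundary $Q_2$.
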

{\bf Remarks.} We say that length functions $|\cdot|_1$ and $|\cdot|_2$ are
Lipschitz equivalent (notation: $|\cdot|_1\sim |\cdot|_2$) 
if there are $\alpha, \beta>0$ such that $\alpha |w|_1\leq |w|_2\leq\beta |w|_1$
for any $w\in F$. Let $\Omega_1$ and $\Omega_2$ be finite sets of subgroups, containing
the same nontrivial subgroups up to conjugacy. It is easy to check that 
$|\cdot|_{\Omega_1}\sim |\cdot|_{\Omega_2}$ and that Theorem~\ref{th_acep21} holds for any 
length function equivalent to $|\cdot|_H$. So, for the statement of the theorem, 
the only things that matter are the properties 1) and 2) but not the exact definition 
of $\Omega(H)$. However, the exact definition of $\Omega(H)$ matters if we want to find $C_H$ constructively. 
Clearly, $\Omega(H)\subseteq\{1\}$ for a malnormal $H$.
So, Theorem~\ref{1} is a corollary of Theorem~\ref{th_acep21}. Notice that the condition
 ``$N=\langle\langle N\rangle\rangle_F\cap H$ whenever 
$N\triangleleft H$ and $\gamma_H(N)>C_H$'' may be considered as some generalization of
ACEP. It is worth mentioning that if the normalizer of $H$ is nontrivial 
(i.e. $Norm(H)\neq H$) then $\Omega(H)$ contains a subgroup conjugate to $H$. 
So, $|\cdot|_H$ is bounded on $H$ and Theorem~\ref{th_acep21} is futile in this
case.  But it is not a weakness of the theorem. It is in the nature of things: 
$N=\langle\langle N\rangle\rangle_F\cap H$ implies that $N\triangleleft Norm(H)$.

\section{Groups without ACEP}  

\begin{lema}\label{2}
        Let $F$ be a free group and $u\in F$, 
        $u\neq 1$. 
        Then  
$\gamma(\left\langle \left\langle u^{n}\right\rangle \right\rangle _F)\geq (n-1)$.
\end{lema}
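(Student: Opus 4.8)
The plan is to reduce to a cyclically reduced relator and then read off a long block of the relator inside any short element of the normal closure. First I would use that the normal closure depends only on the conjugacy class of its generator: writing $u=p\,\tilde u\,p^{-1}$ with $\tilde u$ cyclically reduced gives $\ln{u^{n}}_F=\ln{\tilde u^{n}}_F$ and leaves $\gamma$ unchanged, so I may assume $u$ is cyclically reduced. Writing moreover $u=v^{m}$ with $v$ cyclically reduced and not a proper power, the defining element becomes $u^{n}=v^{mn}$, so the relator is $R^{k}$ with $R=v$ not a proper power and $k=mn\ge n$. Set $\ell=|R|\ge 1$ and pass to the one-relator group $G=F/\ln{u^{n}}_F=\ls{X\mid R^{k}}$, which has torsion since $k\ge 2$ (the case $n=1$ is vacuous, the asserted bound being $0$).

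Now take any $w\in\ln{u^{n}}_F$ with $w\ne 1$ and let it be freely reduced; then $w$ is a nonempty freely reduced word representing $1$ in $G$. The key input is the spelling theorem for one-relator groups with torsion (B.\,B. Newman): such a $w$ must contain a subword $b$ that is a subword of a cyclic permutation of $R^{\pm k}$ with $|b|>(k-1)\ell$. Consequently $|w|\ge|b|>(k-1)\ell\ge(mn-1)\cdot 1\ge n-1$, which gives $\gamma(\ln{u^{n}}_F)=\min\{|w|\mid w\in\ln{u^{n}}_F\setminus\{1\}\}\ge n-1$ (in fact the same estimate yields $\gamma\ge n$).

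The point that needs care -- and the main obstacle -- is the spelling input for small $n$. For $u$ not a proper power the only pieces of the symmetrized closure of $u^{n}$ are overlaps of the periodic word with a nontrivial cyclic shift, which by a Fine--Wilf periodicity argument have length $<\ell$; hence $u^{n}$ satisfies the metric condition $C'(1/n)$. When $n\ge 6$ this is genuine $C'(1/6)$ small cancellation and Greendlinger's lemma directly produces the cell with an exposed boundary arc longer than $(n-1)\ell$ that the argument needs. For $2\le n\le 5$ the metric condition is too weak, and one must instead use the finer theory of one-relator groups with torsion (equivalently, a tailored combinatorial--curvature count on a reduced van Kampen diagram over $\ls{X\mid R^{k}}$, using that every $2$-cell has perimeter $k\ell$ and that interior arcs decompose into pieces of length $<\ell$) to recover the long exposed arc. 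Establishing this uniformly in $n$ is where the work lies; everything else is the routine reduction above.
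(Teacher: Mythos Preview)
Your proof is correct and uses the same key input as the paper: the Newman spelling theorem for one-relator groups with torsion (cited in the paper as Theorem~3 of \cite{10}), which already holds for every exponent $k\ge 2$, so the worry in your final paragraph about small $n$ is moot once that theorem is invoked. The paper's version is slightly more direct---it applies the spelling theorem to $u^n$ without first extracting a root, getting $\gamma\ge (n-1)|u^*|$ where $u^*$ is the cyclic reduction of $u$---but the argument is the same.
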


\begin{proof}
        By Theorem~3 of \cite{10} if 
$w\in \left\langle \left\langle u^{n}\right\rangle\right\rangle _F$ then $w$ 
contains a subword which is identical with a subword of
$u^n$ of length greater than $(n-1)/n$ times the length of $u^n$. 
So, $\gamma( \left\langle \left\langle u^{n}\right\rangle\right\rangle _F)\geq(n-1)|u^*|$
where $u^*$ is a cyclic reduction of  $u$.
\end{proof}

We use the following results:
\begin{theorem}\label{11}
	 
\begin{enumerate}[(a)]
	\item Two elements $u_1$ and $u_2$ of a free group $F$ have the same normal closure in $F$ if and only if $u_1$ is conjugate to $u_2$ or to $u_2^{-1}$. (Magnus \cite{2}).
	\item In a free group $F$, a nontrivial commutator cannot be a proper power. (Schützenberger \cite{7})
	\item The solutions of equation $x^ny^m=z^p$, $n,m,p>1$ in $F$ are powers of the same element, say $a$. 
Moreover, if $x=a^{k_x}$, $y=a^{k_y}$, and $z=a^{k_z}$ then $nk_x+mk_y=pk_z$. (Lyndon and Schützenberger \cite{8}).
\end{enumerate}
\end{theorem}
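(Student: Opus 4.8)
The three assertions are classical facts about equations and normal closures in a free group, so for each my plan is to reduce it to standard machinery: the Magnus embedding of $F$ into the ring of formal power series, the associated lower central series, the Freiheitssatz together with one-relator group theory, and the periodicity combinatorics of reduced words. I would give the genuine mathematical reductions and then cite \cite{2}, \cite{7}, \cite{8} for the nontrivial cores rather than reproduce them in full.

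For part (a), the plan is first to pass to cyclically reduced representatives and, using that taking a root only enlarges the normal closure in a controlled way, to reduce to the case where $u_1$ and $u_2$ are not proper powers. If $\langle\langle u_1\rangle\rangle = \langle\langle u_2\rangle\rangle = N$, then the two one-relator presentations $F/N$ given by $u_1$ and by $u_2$ literally coincide, so it suffices to show that the defining relator of a one-relator group is determined up to conjugacy and inversion. This is what one extracts from the Freiheitssatz combined with Magnus rewriting: the relator becomes ``visible'' in the group through the staggered normal form, and two relators defining the same $N\triangleleft F$ must be conjugate or inverse-conjugate. Proper powers are handled alongside this by comparing the torsion of the quotients.

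For part (b), I would start from the Magnus embedding $\mu\colon F\to\mathbb{Z}\langle\langle X\rangle\rangle$, $x_i\mapsto 1+X_i$, under which the lowest-degree homogeneous part of $\mu(g)-1$ records the class of $g$ in the lower central series. A proper power $c^m$ has lowest part equal to $m$ times that of $c$, hence divisible by $m$, whereas the lowest part of $[a,b]-1$ lies in degree two and represents $\bar a\wedge\bar b\in\gamma_2/\gamma_3\cong\wedge^2 F^{ab}$. This rules out many proper powers but is not by itself conclusive, since $\bar a\wedge\bar b$ can be divisible in $\wedge^2 F^{ab}$; the leading-order obstruction must therefore be supplemented. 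The honest completion is Sch\"utzenberger's combinatorial analysis of the cyclic word $[a,b]$ (alternatively one may invoke that the stable commutator length of $[a,b]$ is $1/2$ while that of $c^m$ is at least $m/2$). I would present the lower-central-series computation as motivation and cite \cite{7} for the full result.

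For part (c), the ``moreover'' clause is immediate: once $x=a^{k_x}$, $y=a^{k_y}$, $z=a^{k_z}$, the equation reads $a^{nk_x+mk_y}=a^{pk_z}$, and since $a$ has infinite order this forces $nk_x+mk_y=pk_z$. The substance is to show that every solution of $x^ny^m=z^p$ with $n,m,p\ge 2$ forces $x,y,z$ into a common cyclic subgroup. My plan follows Lyndon and Sch\"utzenberger: analyze the cancellation of the reduced word $x^ny^m$ against $z^p$, where the high periodicity coming from exponents at least two lets a Fine--Wilf--type overlap argument produce a common period, i.e.\ a single $a$ with $x,y,z\in\langle a\rangle$; one establishes this first in the free monoid and then lifts to $F$, using that commuting elements of a free group share a common root. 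The main obstacle, and the deepest point overall, is exactly this periodicity bookkeeping in (c) together with the relator-uniqueness in (a); I would keep my own contribution to the reductions and the exponent count and defer these cores to \cite{8} and \cite{2}.
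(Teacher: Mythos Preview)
The paper does not prove Theorem~\ref{11} at all: it is stated as a list of classical results and simply attributed to Magnus~\cite{2}, Sch\"utzenberger~\cite{7}, and Lyndon--Sch\"utzenberger~\cite{8}, with no argument given. Your proposal is therefore already more than what the paper does; your ultimate plan---reduce where possible and cite the original sources for the substantive cores---matches the paper's treatment exactly.

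Your added sketches are reasonable as motivation, with one caveat worth flagging. In part~(b) you correctly observe that the Magnus-embedding/lower-central-series computation is suggestive but not conclusive on its own, and you offer two completions: Sch\"utzenberger's combinatorics, or the scl argument. The scl route is valid (if $[a,b]=c^m$ with $m\ge 2$ then $c$ lies in $[F,F]$ since $F^{\mathrm{ab}}$ is torsion-free, and then Duncan--Howie's gap $\mathrm{scl}\ge 1/2$ in free groups gives $1/2=\mathrm{scl}([a,b])=m\cdot\mathrm{scl}(c)\ge m/2$), but it is anachronistic and much heavier than~\cite{7}; if you present it, make clear it is an alternative modern proof rather than the intended one. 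Otherwise your reductions in~(a) via one-relator uniqueness and in~(c) via Fine--Wilf periodicity are the standard lines, and the ``moreover'' in~(c) is indeed immediate once commutativity is established.
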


Let $H$ be a group, $w,a\in H$. We use the notations $w^a=a^{-1}wa$, $w^H=\{h^{-1}wh|h\in H\}$ in other words, $w^H$ is the conjugacy class of $w$ in $H$.

\begin{definition}
	Let $H<F$. We say that $H$ is an S-subgroup of $F$ (notation: $H<_SF$) 
	if $\exists\; w\in H$ such that $w^H \not= w^F \cap H$.
\end{definition}

\begin{proposition}\label{3}
Let $F$ be a free group. If $H<_SF$ then $H$ does not have ACEP in $F$.
\end{proposition}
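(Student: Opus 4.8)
The plan is to show that the property $H<_S F$ directly contradicts the defining requirement of ACEP. Recall that $H<_S F$ means there is some $w\in H$ with $w^H \subsetneq w^F\cap H$; that is, there exists an element $v\in H$ that is conjugate to $w$ in $F$ but \emph{not} conjugate to $w$ inside $H$. The idea is to exploit this discrepancy by constructing, for every candidate finite set $\digamma\subset H$ of nontrivial elements, a normal subgroup $N\triangleleft H$ that avoids $\digamma$ yet fails the equation $H\cap\ln{N}_F=N$, thereby showing $H$ cannot have ACEP.

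First I would fix the witnessing pair $w,v\in H$ with $v\in w^F$ but $v\notin w^H$. The key algebraic observation is that since $v$ is conjugate to $w$ in $F$, the element $wv^{-1}$ (or $w^{-1}v$) lies in $\ln{w}_F$, and more generally for any $k$ the element $w^k v^{-k}$, being a product of $F$-conjugates of $w$ and $w^{-1}$, lies in the normal closure of appropriate powers of $w$ in $F$. The strategy is then: given an arbitrary finite set $\digamma\subset H\setminus\{1\}$, I would choose a large integer $n$ and set $N=\ln{w^n}_H$, the normal closure \emph{inside} $H$ of a high power of $w$. Using Lemma~\ref{2}, the minimal length $\gamma(\ln{w^n}_F)$ grows like $(n-1)|u^*|$, so by taking $n$ large enough I can guarantee that $N\cap\digamma=\emptyset$, since every nontrivial element of $N$ will then be longer than every element of the fixed finite set $\digamma$.

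Next I would establish the failure of the CEP equation for this $N$. The point is that $w^n$ and $v^n$ have the same normal closure in $F$ (by Theorem~\ref{11}(a), since $v^n$ is conjugate to $w^n$ in $F$), so $v^n\in\ln{w^n}_F=\ln{N}_F$, whence $v^n\in H\cap\ln{N}_F$. I must then argue that $v^n\notin N=\ln{w^n}_H$. This is where the hypothesis $v\notin w^H$ is essential: I would show that if $v^n$ were in the normal closure of $w^n$ in $H$, this would force $v$ to be conjugate to $w$ within $H$ (or at least force a relation incompatible with $v\notin w^H$), contradicting the choice of $v$. Consequently $H\cap\ln{N}_F\supsetneq N$, which violates the ACEP requirement for the set $\digamma$.

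The main obstacle is the last step: ruling out $v^n\in\ln{w^n}_H$. Membership of $v^n$ in an $H$-normal closure is a statement inside $H$, not inside $F$, and one must convert the non-conjugacy of $v$ and $w$ in $H$ into non-membership of a power $v^n$ in $\ln{w^n}_H$. I expect this to require care: one cannot simply invoke Magnus's theorem inside $H$ unless $H$ is free (which it is, as a subgroup of a free group), so the cleanest route is probably to apply Theorem~\ref{11}(a) \emph{within $H$}, concluding that $\ln{v^n}_H=\ln{w^n}_H$ would force $v^n$ conjugate in $H$ to $w^{\pm n}$, and then to descend from the equality of powers to the conjugacy of $v$ and $w$ themselves using the root-uniqueness and conjugacy structure of free groups (roughly, conjugate powers of non-proper-power elements come from conjugate bases). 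Handling the sign ambiguity ($w^{-n}$ versus $w^n$) and the possibility that $w$ is itself a proper power are the routine but necessary complications to dispatch.
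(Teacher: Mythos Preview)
Your approach is essentially the paper's: set $N=\ln{w^n}_H$, use Lemma~\ref{2} to make $\gamma(N)$ large enough to miss any given finite $\digamma$, observe that $v^n\in\ln{w^n}_F=\ln{N}_F$, and then argue $v^n\notin N$ via Magnus's theorem inside the free group $H$ together with uniqueness of roots. The paper does exactly this, invoking Theorem~\ref{11}(a) and Theorem~\ref{11}(c).

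One point to tighten. You want to deduce $v^n\notin\ln{w^n}_H$, but the tool you cite (Magnus in $H$) only tells you that $\ln{v^n}_H=\ln{w^n}_H$ would force $v^n$ to be $H$-conjugate to $w^{\pm n}$; mere membership $v^n\in\ln{w^n}_H$ does not a priori give equality of the two normal closures. The fix is immediate and is what the paper (tersely) does: show $\ln{v^n}_H\neq\ln{w^n}_H$, and then note that since both subgroups have the same $F$-normal closure $\ln{w^n}_F$ and both satisfy $\gamma\geq n-1$, at least one of them must fail $H\cap\ln{\,\cdot\,}_F=(\cdot)$. The sign and proper-power issues are indeed routine: if $v$ were $H$-conjugate to $w^{-1}$ then $w$ and $w^{-1}$ would be $F$-conjugate, impossible in a free group for $w\neq1$; and if $v^n=b^{-1}w^{\pm n}b$ in $H$ then uniqueness of $n$-th roots in a free group (equivalently Theorem~\ref{11}(c) with $y=1$) gives $v=b^{-1}w^{\pm1}b$, with no separate case analysis for $w$ a proper power.
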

{\bf Remark.} We will see in Proposition~\ref{example} that the converse of the statement 
is false.
\begin{proof}
Let $w\in H$ be such that $w^H \not= w^F \cap H$. 
This means that $\exists\; a\in F\backslash H$ with $ u=a^{-1}wa\in H$ such that 
$\forall\; b\in H\; u\neq b^{-1}wb\; $. Notice that $u$ cannot be conjugate to 
$w^{-1}$ in $H$. Otherwise, $w$ and $w^{-1}$ would be conjugate in a free group $F$, that is impossible for
$w\neq 1$.
By Lemma~\ref{2} 
$\gamma(\left\langle \left\langle u^{n}\right\rangle \right\rangle_H),
\gamma(\left\langle \left\langle w^{n}\right\rangle \right\rangle_H)\geq n-1$.
Now, as $u=a^{-1}wa$ we have 
$$
\left\langle \left\langle u^n\right\rangle \right\rangle _F=\left\langle \left\langle w^n\right\rangle \right\rangle _F.
$$
Also $u\neq b^{-1}wb$ and $u\neq b^{-1}w^{-1}b$ for any $b\in H$. It follows 
(by Theorem~\ref{11} (c) with $y=1$ and $n=p$) that
$
u^n \neq b^{-1}w^nb, \;
u^n \neq b^{-1}w^{-n}b.
$
Using once again Theorem \ref{11} (a) we have
$$ 
\left\langle \left\langle u^n\right\rangle \right\rangle _H\neq \left\langle \left\langle w^n\right\rangle \right\rangle _H.
$$
We deduce that $H$ does not have ACEP.
\end{proof}
\begin{lema}\label{4}
Let $a\in F\backslash H$. If $\exists\; u=a^{-1}wa\in H\cap H^a$, such that $w$ 
is not a proper power in $F$, then $H<_SF$.
\end{lema}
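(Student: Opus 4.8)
The goal is Lemma~\ref{4}: given $a\in F\setminus H$ and $u=a^{-1}wa\in H\cap H^a$ with $w$ not a proper power in $F$, we must produce an element of $H$ witnessing that $H<_SF$, i.e. some element whose $H$-conjugacy class is strictly smaller than the intersection of its $F$-conjugacy class with $H$.

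The plan is to show that $w$ itself (or $u$) is the witnessing element, so that $u\in w^F\cap H$ but $u\notin w^H$. Since $u=a^{-1}wa$ with $a\in F$, certainly $u$ and $w$ are conjugate in $F$, so $u\in w^F\cap H$, and both lie in $H$ by hypothesis. Thus it suffices to rule out $u\in w^H$, i.e. to show there is no $b\in H$ with $a^{-1}wa=b^{-1}wb$. The key point is that if such a $b$ existed, then $ab^{-1}$ would centralize $w$ in $F$. First I would invoke the standard fact that in a free group the centralizer of a nontrivial element $w$ is cyclic, generated by the root of $w$; since $w$ is \emph{not} a proper power in $F$, that root is $w$ itself, so the centralizer of $w$ is exactly $\langle w\rangle$. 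Hence $ab^{-1}=w^k$ for some $k\in\Z$, giving $a=w^kb\in H$ (as $w,b\in H$), contradicting $a\in F\setminus H$.

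So the argument is: assume for contradiction $u\in w^H$, deduce $a\in H$, contradiction. Concretely, if $a^{-1}wa=b^{-1}wb$ then $(ab^{-1})^{-1}w(ab^{-1})=w$, so $ab^{-1}\in C_F(w)=\langle w\rangle$, whence $a\in\langle w\rangle\, b\subseteq H$, the desired contradiction. Therefore no such $b$ exists, $u\notin w^H$, and since $u\in w^F\cap H$ we conclude $w^H\neq w^F\cap H$, i.e. $H<_SF$.

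The main thing to pin down carefully is the centralizer fact and its reliance on the hypothesis that $w$ is not a proper power. In a free group, any two commuting elements lie in a common cyclic subgroup, so $C_F(w)=\langle r\rangle$ where $r$ is the (unique, up to inverse) primitive root of $w$; the hypothesis that $w$ is not a proper power forces $w=r^{\pm1}$ and hence $C_F(w)=\langle w\rangle$. This is the only delicate step; the rest is formal manipulation. I would note that the non-proper-power hypothesis is genuinely needed here to keep the centralizer as small as $\langle w\rangle$, which is what lets us absorb the extra factor into $H$. One should also record the trivial but necessary observation that $w\neq 1$ (otherwise $u=1$ and the statement is vacuous), which is implicit since $u=a^{-1}wa\in H\cap H^a$ is being treated as a genuine witness.
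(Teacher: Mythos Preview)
Your proof is correct and follows essentially the same route as the paper: assume $u=b^{-1}wb$ for some $b\in H$, deduce that $ab^{-1}$ commutes with $w$, use that in a free group commuting elements are powers of a common element together with the hypothesis that $w$ is not a proper power to get $ab^{-1}\in\langle w\rangle$, and conclude $a\in H$, a contradiction. Your phrasing via the centralizer $C_F(w)=\langle w\rangle$ is just a repackaging of the same fact, and your explicit identification of $w$ as the witnessing element (with $u\in w^F\cap H\setminus w^H$) makes the logical structure a touch clearer than the paper's version.
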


\begin{proof}
Suppose that $\exists\; b\in H$ such that $u=b^{-1}wb$; then 
\begin{eqnarray*}
b^{-1}wb &=&u=a^{-1}wa \\
w &=&(ab^{-1})^{-1}w(ab^{-1})
\end{eqnarray*}
therefore, $w$ commutes with $ab^{-1}$, but $F$ is a free group and this only happens if 
$w$ and $ab^{-1}$ are powers of the same element; by hypothesis $w$ is not a proper power, 
and then 
\begin{eqnarray*}
ab^{-1} &=&w^n \\
a &=&w^nb\in H,
\end{eqnarray*}
but $a\notin H$, a contradiction.
\end{proof}

\begin{corollary}\label{12}
If 
$rank(H\cap H^a)\geq 2$ for some $a\in F\setminus H$ then $H<_SF$.
\end{corollary}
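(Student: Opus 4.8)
The plan is to reduce Corollary~\ref{12} to Lemma~\ref{4}, whose hypothesis asks for an element $u=a^{-1}wa\in H\cap H^a$ with $w\in H$ not a proper power in $F$. Thus the entire task is to manufacture, out of the assumption $\rank(H\cap H^a)\ge 2$, a single witness $w$ that lies in $H$, is carried by conjugation by $a$ into $H\cap H^a$, and is not a proper power in $F$.

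First I would pass from $H\cap H^a$ to its conjugate $a(H\cap H^a)a^{-1}$. A direct computation gives $a(H\cap H^a)a^{-1}=aHa^{-1}\cap H$, so this conjugate is again a \emph{subgroup of} $H$; being conjugate to $H\cap H^a$, it is a free group of the same rank $\ge 2$, hence nonabelian. I would then pick two elements $p,q\in aHa^{-1}\cap H$ that do not commute.

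Next I would set $w=[p,q]=p^{-1}q^{-1}pq$. Since $p,q$ do not commute, $w\neq 1$; since $p,q\in H$, we have $w\in H$; and since $p,q$ lie in $aHa^{-1}\cap H$, so does $w$, which means $u:=a^{-1}wa\in H\cap H^a$. The crucial point is that $w$ is a nontrivial commutator in the free group $F$, so Theorem~\ref{11}(b) forbids it from being a proper power. With $w\in H$ not a proper power and $u=a^{-1}wa\in H\cap H^a$, Lemma~\ref{4} applies verbatim and yields $H<_SF$.

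I do not anticipate a genuine obstacle here: the only point requiring care is the bookkeeping on conjugation, namely checking that the commutator $w$ simultaneously lives in $H$ (so that $w^H$ in the definition of an $S$-subgroup makes sense) and is conjugated by $a^{-1}(\cdot)a$ back into $H\cap H^a$, so that Lemma~\ref{4} is literally applicable. Once the identity $a(H\cap H^a)a^{-1}=aHa^{-1}\cap H\subseteq H$ is established, everything else is immediate from Theorem~\ref{11}(b).
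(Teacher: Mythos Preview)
Your proof is correct. Both your argument and the paper's reduce the corollary to Lemma~\ref{4} by producing an element of $H\cap H^a$ (equivalently, of $H\cap H^{a^{-1}}$) that is not a proper power in $F$; the difference lies only in how that element is manufactured. You take a nontrivial commutator $w=[p,q]$ inside the nonabelian free group $a(H\cap H^a)a^{-1}\subseteq H$ and invoke Theorem~\ref{11}(b) (Sch\"utzenberger) to conclude $w$ is not a proper power. The paper instead takes free generators $u,v$ of $H\cap H^a$ and uses Theorem~\ref{11}(c) (Lyndon--Sch\"utzenberger) to argue that if all three of $u$, $v$, $uv$ were proper powers then $\langle u,v\rangle$ would be cyclic, a contradiction; hence one of them is not a proper power, and Lemma~\ref{4} applies (being a proper power is conjugation-invariant, so there is no issue about which side of the conjugation one works on). Your route is arguably more direct and requires only the weaker result~(b), while the paper's route avoids the preliminary passage to the conjugate $aHa^{-1}\cap H$.
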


\begin{proof}
Let $u,v$ be in a set of free generators of $H\cap H^a$. Notice that at least one element in 
$\{u,v,uv\}$ is not a proper power in $F$ and we are done by Lemma \ref{4}. 
Indeed, putting $x^n=u$, $y^m=v$, $z^p=uv$ implies that $\langle u, v\rangle$ is a 
cyclic group by Theorem \ref{11} (c).
\end{proof}

\begin{corollary}
Let $H$ contain a normal subgroup of $F$, i.e. 
$\exists \; N<H$ such that $N\vartriangleleft F$, then $H<_SF$.
\end{corollary}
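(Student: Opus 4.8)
The plan is to reduce the statement to Lemma~\ref{4} (or, alternatively, to Corollary~\ref{12}). I read the hypothesis in its intended form: $F$ is non-abelian, $H$ is a \emph{proper} subgroup (for $H=F$ there is no $a\in F\setminus H$ and the notion of $S$-subgroup is vacuous), and the normal subgroup $N$ with $N<H$, $N\triangleleft F$ is \emph{nontrivial}.

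First I would record the elementary consequence of normality: since $N\triangleleft F$ we have $N=N^{a}\le H^{a}$ for every $a\in F$, and of course $N\le H$, so $N\le H\cap H^{a}$ for all $a$. Picking any $a\in F\setminus H$ (possible because $H$ is proper) already gives $\{1\}\neq N\le H\cap H^{a}$.

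The crux is to exhibit inside $N$ an element that is not a proper power in $F$. For this I first argue that $N$ is non-abelian. An abelian subgroup of a free group is cyclic, say $N=\langle c\rangle\cong\Z$; normality makes conjugation by any $a\in F$ an automorphism of $N$, so $a^{-1}ca=c^{\pm1}$, and since $c$ is not conjugate to $c^{-1}$ in a free group (for $c\neq1$) we get $a^{-1}ca=c$ for all $a$, i.e.\ $c$ is central — impossible in a non-abelian free group unless $c=1$. Hence $N$ is non-abelian, so there are $x,y\in N$ with $w:=[x,y]\neq1$. Then $w\in N$, and by Theorem~\ref{11}(b) a nontrivial commutator is not a proper power in $F$.

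Finally I would apply Lemma~\ref{4} with this $w$ and the chosen $a\in F\setminus H$. Setting $u=a^{-1}wa$, we have $u\in N\le H$ because $w\in N\triangleleft F$, and $u\in H^{a}$ because $w\in H$; thus $u=a^{-1}wa\in H\cap H^{a}$ with $w$ not a proper power, and Lemma~\ref{4} yields $H<_S F$. (Equivalently, one can skip the commutator: $N$ non-abelian is non-cyclic, so $H\cap H^{a}\supseteq N$ is non-cyclic, whence $\mathrm{rank}(H\cap H^{a})\ge2$ and Corollary~\ref{12} applies directly.) The only substantive point is the non-abelianness of a nontrivial normal subgroup of a non-abelian free group, which supplies the required non-proper-power element; the remaining steps are routine bookkeeping with normality and a direct appeal to an already-established lemma.
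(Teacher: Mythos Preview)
Your proof is correct. The paper's own proof is a single line invoking Corollary~\ref{12}, which is exactly the alternative you mention at the end: since $N\triangleleft F$ forces $N\le H\cap H^{a}$ and a nontrivial normal subgroup of a non-abelian free group is non-abelian (hence non-cyclic), one gets $\rank(H\cap H^{a})\ge 2$. Your primary route through Lemma~\ref{4} and Theorem~\ref{11}(b) (producing a commutator in $N$ that is not a proper power) is a minor variation resting on the same key fact; you also make explicit the implicit hypotheses (nontrivial $N$, proper $H$, non-abelian $F$) and the argument for why $N$ must be non-abelian, which the paper leaves to the reader.
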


\begin{proof}
It follows immediately from the previous corollary.
\end{proof}

\textit{Proof of the Theorem \ref{13}.} The first case follows from Theorem \ref{1}. The second and third cases follow from Lemma \ref{4}, Corollary \ref{12} and Proposition \ref{3}. The fourth case is studied in the next section.

\section{The case 4}

In this section we give examples of cyclonormal subgroups $H_{1,2}$ with 
$H_{1,2}\cap H_{1,2}^a$, 
$a\not\in H$, being trivial or generated by a proper power, such that $H_1$ does not have ACEP 
and $H_2$ has ACEP. After that we investigate the S-subgroups further 
and prove Proposition~\ref{example}.

\subsection{Examples.}
Using Proposition 9.15 \cite{3} we can see that the subgroups 
$ H_1=\left\langle x^n,y^{-1}x^ny\right\rangle $, $H_2=\left\langle x^n,y^n\right\rangle$ 
are cyclonormal and every nontrivial $H_i\cap H_i^a$,  $a\in F\backslash H$, 
is generated by a proper power. It is easy to see that  
$H_1$ does not possess ACEP while $H_2$ does. 

To prove that $H_2=\left\langle x^{n},y^{n}\right\rangle $ possesses ACEP (even CEP), 
let us take a group generated by 2 elements
$G=\ls{c,d}$ and define an epimorphism $\theta :H_2\longrightarrow G$
such that $\theta(x^n)=c$ and $\theta(y^n)=d$. Being able to 
extend  $\theta$ up to an epimorphism $\theta^{\ast }:F\longrightarrow G^{\ast }$ 
is equivalent to being able to solve the equations $x^n=c$, $y^n=d$ over $G$. 
But both equations have a solution according to 
F. Levin \cite{6}.
On the other hand, it is easy to see that $H_1$ does not possess ACEP. Moreover, it is
an S-subgroup. One can check it directly, or, alternatively, 
it follows by Proposition~\ref{6} 
below. An example of a non-S-subgroup without  ACEP is given in Proposition~\ref{example}. 
\subsection{S-Subgroups}\label{case4sub2}

\begin{definition}
Let $H<_SF$. A pair $(w,a)$ is said to be an S-witness for $H$ if $w,w^a \in H$ and 
$ \forall\;b\in H \; w^a \not= w^b$
\end{definition}
In this subsection we 
\begin{itemize}
\item Show that for a finitely generated
$H<F$ it is decidable whether $H<_SF$;
\item Give an example of a finitely generated $H<F$ that is not an S-subgroup but
does not possess ACEP.
\end{itemize}

\begin{proposition}\label{5}
Let $H<F$ and $b\in F$. Then 
\begin{itemize}
\item $H$ has ACEP if and only if $H^b$ has ACEP.
\item If $H<_SF$ with an S-witness $(w,a)$ then $H^b<_SF$ with an S-witness 
$(w^b,a^b)$.
\end{itemize}
\end{proposition}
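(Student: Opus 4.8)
The plan is to prove both bullet points of Proposition~\ref{5} by exploiting how conjugation by a fixed element $b\in F$ interacts with normal closures and conjugacy classes. I would first record the two elementary algebraic facts that do all the work: conjugation by $b$ is an automorphism of $F$ carrying $H$ isomorphically onto $H^b$, and normal closure is preserved under any automorphism, so that $\ln{K^b}_F=\big(\ln{K}_F\big)^b$ for any $K\subseteq F$. Since $b$ is fixed, conjugation $\phi_b(x)=b^{-1}xb$ is a bijection $F\to F$ restricting to a bijection $H\to H^b$, and I would verify that it sends normal subgroups of $H$ to normal subgroups of $H^b$ and vice versa, establishing a lattice isomorphism between the normal subgroups of $H$ and those of $H^b$.

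For the first bullet, I would show ACEP is invariant under the change of variable $K\mapsto K^b$. Given the finite witness set $\digamma\subset H$ for $H$, the natural candidate witness set for $H^b$ is $\digamma^b=\{f^b\mid f\in\digamma\}$, which is again a finite set of nontrivial elements of $H^b$. The key computation is that for any $N\triangleleft H$,
\begin{align*}
H^b\cap\ln{N^b}_F &= H^b\cap\big(\ln{N}_F\big)^b\\
&=\big(H\cap\ln{N}_F\big)^b,
\end{align*}
using that $\phi_b$ is a bijection of $F$ intersecting cleanly with the image sets. Combining this with $N^b\cap\digamma^b=\emptyset \iff N\cap\digamma=\emptyset$, the ACEP condition for $H$ with witness $\digamma$ transfers verbatim to the ACEP condition for $H^b$ with witness $\digamma^b$; the reverse implication follows by symmetry, replacing $b$ with $b^{-1}$.

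For the second bullet, I would verify directly that $(w^b,a^b)$ is an S-witness for $H^b$. From $w,w^a\in H$ I get $w^b,(w^a)^b\in H^b$, and the identity $(w^a)^b=w^{ab}=w^{a^b b}=(w^b)^{a^b}$ shows $(w^b)^{a^b}\in H^b$ as required. The defining inequality is where I would be most careful: I must show $(w^b)^{a^b}\neq(w^b)^{c}$ for all $c\in H^b$. Writing an arbitrary $c\in H^b$ as $c=d^b$ with $d\in H$, conjugation-invariance of the relation gives $(w^b)^{d^b}=(w^d)^b$, so the inequality $(w^b)^{a^b}\neq(w^b)^{d^b}$ is equivalent to $(w^a)^b\neq(w^d)^b$, which holds because $\phi_b$ is injective and $w^a\neq w^d$ for every $d\in H$ by hypothesis. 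I do not anticipate a genuine obstacle here; the only thing to handle with discipline is keeping the conjugation exponent bookkeeping correct, in particular the identity $a^b b=ab$ that underlies $(w^a)^b=(w^b)^{a^b}$, so that the index set $H^b$ is traversed exactly as $d$ ranges over $H$.
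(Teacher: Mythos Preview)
Your argument is correct and is exactly the routine verification the paper has in mind; in fact the paper states Proposition~\ref{5} without proof and moves directly on, so there is no ``paper's own proof'' to compare against.

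One small bookkeeping slip: in your chain $(w^a)^b=w^{ab}=w^{a^bb}=(w^b)^{a^b}$ the middle step is wrong as written, since $a^bb=b^{-1}ab^2\neq ab$ in general. The correct identity is $(x^y)^z=x^{yz}$, so $(w^b)^{a^b}=w^{b\cdot a^b}$ and $b\cdot a^b=b(b^{-1}ab)=ab$, giving $(w^b)^{a^b}=w^{ab}=(w^a)^b$. Your endpoints are right; just replace ``$a^bb=ab$'' by ``$b\,a^b=ab$'' (equivalently, keep track of the order in $(x^y)^z=x^{yz}$) and the exposition is clean.
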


We use the Stallings (folded) graphs, see \cite{3} for details. We start with some terminology. 
Let $F=F(X)$ be a free group on $X$. An $X$-digraph $\Gamma$ is a directed graph with edges 
labeled by elements of $X$. We denote by $V.\Gamma$ and $E.\Gamma$ the set of vertices and
the set of edges of $\Gamma$, respectively. 
For an edge $e\in E.\Gamma$ we denote the origin of $e$ by $o(e)$ and the terminus of 
$e$ by $t(e)$. The label of $e$ is denoted by $l(e)$. 
We introduce a formal inverse $e^{-1}$ of $e$ with label $l(e^{-1})=l(e)^{-1}$,
the origin $o(e^{-1}) = t(e)$, and terminus $t(e^{-1}) = o(e)$.
(Notice that $l(e)\in X$ while $l(e^{-1})\in X^{-1}$. We also suppose that 
$e^{-1}\not\in E.\Gamma$. Intuitively, if one goes in the direction 
of an edge one reads its label, say $x$, if one goes in the opposite direction of an edge 
one reads the inverse of its label, say $x^{-1}$). A path $P$ in $\Gamma$ is a sequence 
$P=e_1,...,e_k$ where $e_i\in (E.\Gamma\cup (E.\Gamma)^{-1})$  with
$o(e_i)=t(e_{i-1})$ for $1<i\leq k$. A path $P$ has a naturally defined 
label $l(P)=l(e_1),...,l(e_k)$, a naturally defined inverse path $P^{-1}=e_k^{-1},...,e_1^{-1}$ with label 
$l(P^{-1})=l(P)^{-1}$. (We suppose here that $(e^{-1})^{-1}=e$.) 
The origin and terminus of the path 
$P$ are defined as $o(P)=o(e_1)$ and $t(P)=t(e_k)$.
The label of a path is just a word in the alphabet $X\cup X^{-1}$, not necessarily 
reduced. A path in $\Gamma$ is called reduced if
it does not contains subpath $e,e^{-1}$. A cyclic path (or a cycle) is called cyclically reduced if
all its cyclic permutations are reduced. 
%%1
\begin{definition}
An $X$-digraph $\Gamma$ is called folded if for every vertex $v\in\Gamma$ and $x\in X$ there is
at most one edge $e$ with $o(e)=v$ and $l(e)=x$ as well as at most one 
edge $e$ with $t(e)=v$ and $l(e)=x$.
\end{definition}
%%1
\begin{figure}[H]
	\centering
	\includegraphics[scale=0.6]{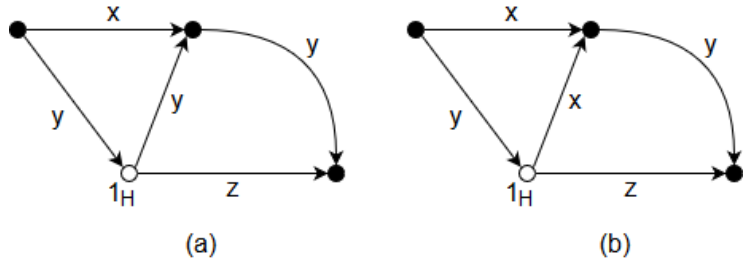}
	\caption{Folded (a) and nonfolded graphs (b).}
        \label{fig0}   
\end{figure}

Each (reduced) path in folded $\Gamma$
produces a (reduced) word in $X\cup X^{-1}$. We denote by $P(v_1,v_2,w)$ 
a path in $\Gamma$ from $v_1$ to $v_2$ with label $w$. For a folded $\Gamma$, $v\in V.\Gamma$, 
and a word $w\in (X\cup X^{-1})^*$ may exist at most one path $P(v,\cdot,w)$ 
started from  $v$ with label $w$. If $w$ is reduced, then $P(v,\cdot,w)$ is
reduced. If $P(v,v,w)$ is a cycle and $w$ is cyclically reduced, then $P(v,v,w)$
is cyclically reduced. Let $H$ be an f.g.s. of a free group $F=F(X)$.
\begin{definition}\label{def_Stallings}
The Stallings graph for a subgroup $H$ is a connected folded $X$-graph $\Gamma(H)$ with a marked vertex $1_H$
satisfying 
\begin{itemize}
\item $H$, as a set of reduced words, coincides with the set of labels of reduced cycles form $1_H$ to $1_H$;  
\item nonmarked vertex of $\Gamma(H)$ has degree at least $2$.  
\end{itemize}
Notice, that the marked vertex $1_H$ may have degree $1$.
\end{definition}
A Stallings graph $\Gamma(H)$ may be constructed using Stallings's folding, \cite{3}. The correspondence 
$H\to\Gamma(H)$ is a bijection, considering $\Gamma(H)$ up to isomorphisms 
of $X$-graphs with a marked vertex, \cite{3}. 
By definition,
$\Gamma=\Gamma(H)$ may be considered as a graph without edges of degree $1$, named $Type(\Gamma)$, 
with a ``tail'' going to $1_H$ attached. (The ``tail'' may be empty.)
Formally $Type(\Gamma)$ may be constructed as follows:
The only vertex of $\Gamma_0(H)=\Gamma(H)$ that may have degree one is $1_H$. 
In this case, removing
$1_H$ from $V.\Gamma$ we obtain the induced subgraph $\Gamma_1(H)$. The graph 
$\Gamma_1(H)$ has at most one vertex of degree 1. Removing it we obtain subgraph 
$\Gamma_2(H)$. Repeating this procedure, we end up with a graph $\Gamma_k(H)$ without 
vertices of degree 1. By definition, $Type(\Gamma(H))=\Gamma_k(H)$.     
\ignor{We need the following properties of the Stallings folded graph $\Gamma(H)$.
\begin{enumerate}
\item $H$ coincides with labels of the reduced (but not necessarily cyclically reduced) cycles
from $1_H$ to $1_H$. If $\tilde P$ is a reduction of a path $P$ then 
$l(\tilde P)$ is a reduction of $l(P)$ 
\item Let a word $\tilde w$ be a reduction of a word $w$. If there exist
a path $P=P(u,v,w)$ then the reduction of $P$ is a path $P(u,v,\tilde w)$.
\end{enumerate}
}
\begin{lema}[\cite{3}]\label{lm_type}
 Subgroups $H_1$ and $H_2$ are conjugate in $F$ if and only if
$Type(\Gamma(H_1))$ and $Type(\Gamma(H_2))$ are isomorphic as $X$-graphs (not respecting the marked vertices).
\end{lema}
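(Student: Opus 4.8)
The plan is to prove Lemma~\ref{lm_type}, the statement that two finitely generated subgroups $H_1,H_2<F$ are conjugate if and only if $Type(\Gamma(H_1))$ and $Type(\Gamma(H_2))$ are isomorphic as $X$-graphs (ignoring marked vertices). The cleanest route uses the dictionary between Stallings graphs and the core of the covering space of a wedge of circles, together with the observation that conjugation corresponds to moving the basepoint.

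\textbf{The reverse direction (isomorphic types $\Rightarrow$ conjugate).} First I would suppose $\phi\colon Type(\Gamma(H_1))\to Type(\Gamma(H_2))$ is an isomorphism of $X$-graphs. Recall from the excerpt's construction that each $\Gamma(H_i)$ is obtained from $Type(\Gamma(H_i))$ by attaching a (possibly empty) tail terminating at the marked vertex $1_{H_i}$. Let $v_i$ be the vertex of $Type(\Gamma(H_i))$ where the tail of $\Gamma(H_i)$ is attached (the endpoint of the tail inside the core), and let $p_i$ be the label of the tail path read from $1_{H_i}$ to $v_i$; so $p_i$ is a reduced word with $p_i^{-1}H_ip_i = \pi_1(Type(\Gamma(H_i)),v_i)$ in the obvious sense, i.e. $H_i = p_i\cdot\pi_1(Type,v_i)\cdot p_i^{-1}$ as sets of labels of reduced cycles. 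Since $\phi$ is an $X$-graph isomorphism it induces a label-preserving bijection on reduced cycles, hence $\pi_1(Type(\Gamma(H_1)),v_1)$ and $\pi_1(Type(\Gamma(H_2)),\phi(v_1))$ are \emph{equal} as subgroups of $F$ after we connect $\phi(v_1)$ to $v_2$ by a reduced path $q$ in $Type(\Gamma(H_2))$; conjugating by $l(q)$ identifies the basepoint-$\phi(v_1)$ subgroup with the basepoint-$v_2$ subgroup. Composing these three conjugations (by $p_1^{-1}$, by $l(q)$, by $p_2$) exhibits an explicit $a\in F$ with $H_2 = a^{-1}H_1 a$.

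\textbf{The forward direction (conjugate $\Rightarrow$ isomorphic types).} Conversely, suppose $H_2 = a^{-1}H_1 a$ for some $a\in F$. The key point is that $Type(\Gamma(H))$ is a conjugacy invariant of $H$: it depends only on the set of reduced cyclic words that represent conjugacy classes meeting $H$, equivalently on the underlying core graph with its $X$-labelling, which is insensitive to the choice of basepoint. I would make this precise by noting that $Type(\Gamma(H))$ can be characterized intrinsically as the unique (up to $X$-graph isomorphism) connected folded $X$-graph with no degree-one vertices whose cyclically reduced closed paths are exactly the cyclic reductions of the cyclically reduced elements of $H$ up to conjugacy. Since conjugation $H\mapsto a^{-1}Ha$ permutes each $F$-conjugacy class within itself, it preserves this set of cyclically reduced cyclic words, and hence must yield the same core graph. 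Because the correspondence $H\mapsto\Gamma(H)$ is a bijection and folding is confluent, the uniqueness statement forces $Type(\Gamma(H_1))\cong Type(\Gamma(H_2))$ as $X$-graphs.

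\textbf{Where the difficulty lies.} The routine bookkeeping (tails, basepoints, the three-fold conjugation) is straightforward; the genuine content is the intrinsic conjugacy characterization of $Type(\Gamma(H))$, i.e. showing that the core graph is \emph{determined} by the set of cyclically reduced conjugacy representatives and, conversely, that every cyclically reduced closed path in the core lifts to a conjugate of an element of $H$. The hard part is the second half of this: one must verify that a cyclically reduced cycle based at an arbitrary core vertex $v$, after conjugating by the label of any reduced path from $1_H$ to $v$, indeed lands in $H$ and is cyclically reduced, which requires the folding properties recorded just before the lemma (a cyclically reduced word labels a cyclically reduced cycle, and reductions of paths correspond to reductions of labels). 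I would isolate this as the central step and reduce everything else to it; the whole argument is essentially the standard fact, so I expect to cite \cite{3} for the folding machinery and supply only the conjugacy-tracking that the lemma needs.
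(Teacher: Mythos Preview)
The paper does not supply its own proof of this lemma: it is stated with the attribution \cite{3} and used as a black box. So there is no ``paper's approach'' to compare against; your proposal is a reconstruction of the standard argument from the reference.

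Your reverse direction is clean and correct. For the forward direction, the characterization you invoke---that $Type(\Gamma(H))$ is determined by the set of cyclically reduced words conjugate into $H$---is true but is itself essentially the content of the lemma, so appealing to it risks circularity unless you actually establish it. A more self-contained route (and the one usually taken in \cite{3}) is constructive: to build $\Gamma(a^{-1}Ha)$, attach a path with label $a^{-1}$ to $1_H$ in $\Gamma(H)$, declare its far endpoint the new basepoint, and fold. All folds occur along the old tail and the new segment; the subgraph $Type(\Gamma(H))$ is already folded and acquires no new degree-one vertices, so it survives unchanged as the core of the result. This avoids the uniqueness-from-cyclic-words step you flagged as the ``hard part'' and makes the argument a direct computation rather than an appeal to an intrinsic characterization. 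Either way the result is standard, and your outline would be acceptable with that step tightened.
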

\begin{proposition}\label{6}
Let $\Gamma (H)$ be the Stallings  graph for $H$. 
Then $H<_SF$ if and only if there are cyclically reduced paths $P(v,v,w)$, 
$P(v^{\prime },v^{\prime },w)$ in $\Gamma (H)$ such that one is not 
a cyclic permutation of the other.
\end{proposition}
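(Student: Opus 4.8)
The plan is to translate the property $H<_SF$ into the language of the Stallings graph $\Gamma=\Gamma(H)$ through the standard dictionary between elements of $H$ and labelled paths, and then to isolate the one genuinely algebraic input, namely that centralizers in a free group are cyclic.

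First I would fix the bookkeeping. Since $\Gamma$ is folded, for every vertex $v$ and every word there is at most one path starting at $v$ with that label; reading a word $g\in F$ from the marked vertex $1_H$ therefore ends at a well-defined vertex $\delta(g)$, and the basic fact $\delta(g_1)=\delta(g_2)\iff g_1g_2^{-1}\in H$ holds. To each cyclically reduced closed path $P(v,v,\omega)$ I associate the element $g=u_v\omega u_v^{-1}\in H$, where $u_v$ is the label of a reduced path from $1_H$ to $v$; changing the connecting path multiplies $u_v$ on the left by an element of $H$, so $g$ is well defined up to $H$-conjugacy, and in every case $g\sim_F\omega$ (I write $\sim_F,\sim_H$ for conjugacy in $F$ and in $H$). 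Conversely, every $H$-conjugacy class arises this way: a nontrivial $g\in H$ is a reduced closed path at $1_H$, and cyclically reducing this path (a free homotopy in $\Gamma$) produces a cyclically reduced closed path whose associated element is $H$-conjugate to $g$ and whose label is the cyclic reduction of $g$.

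The heart of the argument is the following computation for two cyclically reduced closed paths carrying the \emph{same} label $\omega$, based at $v$ and $v'$. Writing $g_1=u_v\omega u_v^{-1}$ and $g_2=u_{v'}\omega u_{v'}^{-1}$, an equality $g_2=b^{-1}g_1b$ with $b\in H$ rearranges to say that $c:=u_v^{-1}bu_{v'}$ centralizes $\omega$. Since the centralizer of the nontrivial element $\omega$ in the free group $F$ is the cyclic group $\langle\omega_0\rangle$ generated by the primitive root $\omega_0$ of $\omega$, we get $b=u_v\omega_0^{m}u_{v'}^{-1}$ for some $m$, and $b\in H$ holds iff $\delta(u_v\omega_0^{m})=\delta(u_{v'})$. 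But $\omega_0$ labels a closed path at $v$ (as $\omega$ does and $\Gamma$ is folded), so $\delta(u_v\omega_0^{m})=v$, while $\delta(u_{v'})=v'$; hence $g_1\sim_H g_2$ iff $v=v'$. Because a cyclic permutation of $P(v,v,\omega)$ that again reads $\omega$ must return to the base vertex $v$, this says precisely that two same-label cyclically reduced closed paths give $H$-conjugate elements iff one is a cyclic permutation of the other. \textbf{This step is the main obstacle}: it is where foldedness and the cyclicity of centralizers in free groups must be combined correctly, and where one must be careful that the two paths share the label $\omega$ rather than merely conjugate labels.

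With this in hand both directions are bookkeeping. If there are same-label cyclically reduced closed paths that are not cyclic permutations of one another, then $v\neq v'$, so $g_1,g_2\in H$ satisfy $g_1\sim_F g_2$ but $g_1\not\sim_H g_2$; thus $g_1^{H}\neq g_1^{F}\cap H$ and $H<_SF$. Conversely, from an S-witness I take $w\in H$ and $u\in(w^{F}\cap H)\setminus w^{H}$, realize the $H$-conjugacy classes of $w$ and $u$ by cyclically reduced closed paths as above, and use $w\sim_F u$ to cyclically permute one of the two paths until both read the same label $\omega$. Were the resulting base vertices equal, the two paths would coincide and force $w\sim_H u$, a contradiction; hence the base vertices differ and, by the previous paragraph, the two paths are not cyclic permutations of each other, completing the equivalence.
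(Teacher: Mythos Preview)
Your architecture matches the paper's --- both hinge on the cyclicity of centralizers in $F$ --- but the step you flag as the main obstacle contains an actual error. You assert that ``$\omega_0$ labels a closed path at $v$ (as $\omega$ does and $\Gamma$ is folded)'' and conclude $\delta(u_v\omega_0^{m})=v$, hence ``$g_1\sim_H g_2$ iff $v=v'$''. Foldedness does not force the primitive root $\omega_0$ to close up at $v$ merely because $\omega=\omega_0^{\,n}$ does. Take $H=\langle x^2\rangle$: the Stallings graph is an $x$-labelled $2$-cycle on $1_H,v$; the paths $P(1_H,1_H,x^2)$ and $P(v,v,x^2)$ are cyclic permutations of each other with distinct base vertices, and both yield $g_1=g_2=x^2$. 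So both ``$g_1\sim_H g_2\iff v=v'$'' and your companion claim that a same-label cyclic permutation of $P(v,v,\omega)$ must be based at $v$ are false; you reach the correct final equivalence only because these two false equivalences happen to compose to a true one.

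The repair stays within your framework. Let $v=v_0,v_1,\dots,v_n=v$ be the vertices reached from $v$ after reading $\omega_0,\omega_0^{2},\dots$ along $P(v,v,\omega)$. Since $u_v\omega_0^{\,n}u_v^{-1}=g_1\in H$, one may reduce $m$ modulo $n$; then $\delta(u_v\omega_0^{\,m})=v_m$, so $b=u_v\omega_0^{\,m}u_{v'}^{-1}\in H$ for some $m$ iff $v'\in\{v_0,\dots,v_{n-1}\}$, which is exactly the condition that $P(v',v',\omega)$ is a cyclic permutation of $P(v,v,\omega)$. This is essentially the paper's computation as well (after it first conjugates $H$ so that one base vertex is $1_H$), and with this correction both directions of your argument go through as written.
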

{\bf Remark.} The existence of such a pair of paths may be effectively checked, using, for
example, the product-graph $\Gamma(H)\times\Gamma(H)$, see \cite{3} and 
Subsection~\ref{sub_length}. 

\begin{proof}
Notice that all cyclically reduced cycles lie in $Type(\Gamma(H))$. 
So, by Lemma~\ref{lm_type} and Proposition~\ref{5}, we may  
assume that $v^{\prime }=1_H$, changing $H$ by its conjugate, if necessary. 
All words in the proof are supposed to be cyclically reduced. 
So, $w_1w_2$ denotes the product in a free group, or, precisely, 
the corresponding reduced word.

$\Longleftarrow$ Suppose, that there are paths $P_2=P(v,v,w)$, 
$P_1=P(1_H,1_H,w)$ in $\Gamma (H)$ such that 
$P_2$ is not a cyclic permutation of $P(1_H,1_H,w)$. 
There is a reduced path $Q$ from $v$ to $1_H$. Let $a=l(Q)$. We show that $(w,a)$ is
an S-witness for $H$. First of all, $a\not\in H$ and the reduction of 
$Q^{-1}P_2Q$ is $P(1_H,1_H,a^{-1}wa)$. 
So, $w,a^{-1}wa\in H$. Suppose, searching for a contradiction,  
that $(w,a)$ is not an S-witness. Then there exists $b\in H$,
such that $a^{-1}wa=b^{-1}wb$.  
The reduction of $P(1_H,1_H,b)P(1_H,v,a^{-1})$ is a path $P(1_H,v,u)$ with
$u=ba^{-1}\neq 1$. 
As $[u,w]=1$ we have 
$w=y^n$ and $u=y^m$. Since $w$ is cyclically reduced, $y$ is cyclically reduced as well. 
So, product $yy$ is just a concatenation (no cancellations happen). $P(1_H,v,u=y^m)$ is a subpath of 
$P(1_H,1_H,w=y^n)$ or vice versa (by uniqueness of paths with given label and origin in folded graphs).
In either case, we deduce that $P(1_H,1_H,w)=P(1_H,v,y^k)P(v,1_H,y^{n-k})$ and 
$P_2=P(v,1_H,y^{n-k})P(1_H,v,y^k)$ is
a cyclic permutation of $P_1$, a contradiction.

$\Longrightarrow$ Suppose that $H<_SF$ and $(w,a)$ is an S-witness. W.l.g. we assume
that $w$ is cyclically reduced. It implies that either $a^{-1}w$ or $wa$ is a product 
without cancellation. Changing, if necessary, $w\to w^{-1}$, w.l.g. we may assume that 
$a^{-1}w$ is the product without cancellation. We may assume as well that not all 
letters of $w$ are canceled in $wa$. (If not, then change $a\to wa$. It may happen that after this we
have to change $w\to w^{-1}$, and, if necessary, repeat the process several times.)
As $a^{-1}wa\in H$ and the product of $a^{-1}$ and $wa$ is without cancellations paths $P(1_H,1_H,a^{-1}wa)=P(1_H,v,a^{-1})P(v,1_H,wa)$ exist.
Now, there is a path
$P(v,v,w)$ which is the reduction of $P(v,1_H,wa)P(1_H,v,a^{-1})$. 
So, there are $P_1=P(1_H,1_H,w)$ and $P_2=P(v,v,w)$. Suppose, searching for 
a contradiction, that $P_2$ is a cyclic permutation of $P_1$. In other words, 
$P_1=P(1_H,v,y)P(v,1_H,z)$ and 
$P_2=P(v,1_H,z)P(1_H,v,y)$. So, $w=yz=zy$. It implies that $y^{-1}wy=w$, $b=ya\in H$
and $b^{-1}wb=a^{-1}wa$, a contradiction.    
\end{proof}
%%1
\begin{proposition}\label{example}
Let $H=\left\langle a^4, a^2ba, aca^2,bc^{-1}\right\rangle$. 
Then $H$ is not an S-subgroup of $F(a,b,c)$ and does not have ACEP.
\end{proposition}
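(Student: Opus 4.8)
The statement has two parts: $H$ does not have ACEP, and $H$ is not an S-subgroup. The plan is to establish the first part by exhibiting concrete witnesses to a failure of the ACEP criterion, and the second part by a case analysis over the Stallings graph using Proposition~\ref{6}.

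For the failure of ACEP, I would exhibit an explicit conjugating element $a \in F \setminus H$ and a nontrivial $w \in H$ with $w^a \in H$, so that $H \cap H^a$ is nontrivial and generated by a proper power (consistent with $H$ being a case-4 subgroup). The natural candidate is to notice that $H$ is designed so that some cyclically reduced word lies in $H$ along two essentially different cyclic readings. Concretely, I would compute the relations among the generators: from $a^2ba$ and $aca^2 \in H$ one extracts conjugates of $b$ and $c$, and $bc^{-1} \in H$ ties them together; combined with $a^4 \in H$ this should produce an element $u = a^{-1}w a \in H$ with $w \in H$ but $w^a \neq w^b$ for all $b \in H$. The element $w$ here will be a proper power (so Lemma~\ref{4} does not apply directly), and I expect the cleanest route is to invoke the ACEP-failure mechanism from the proof of Proposition~\ref{3}: produce $w \in H$ with $w^H \subsetneq w^F \cap H$ replaced by the weaker data sufficient to separate the normal closures. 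Since $H$ is asserted \emph{not} to be an S-subgroup, the ACEP failure here cannot come from property (S); instead I would directly build, for each large $n$, a normal subgroup $N_n \triangleleft H$ with $\gamma(N_n) \to \infty$ yet $N_n \neq \langle\langle N_n\rangle\rangle_F \cap H$, using the proper-power structure of $H \cap H^a$ and Theorem~\ref{11}(c) to separate $\langle\langle u^n\rangle\rangle_H$ from $\langle\langle w^n\rangle\rangle_H$ while keeping their $F$-normal closures equal.

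For $H$ not being an S-subgroup, I would draw the Stallings graph $\Gamma(H)$ explicitly from the generators $a^4,\,a^2ba,\,aca^2,\,bc^{-1}$ by Stallings folding, and then apply Proposition~\ref{6}: I must show that no cyclically reduced word $w$ labels two cyclically reduced cycles $P(v,v,w)$, $P(v',v',w)$ in $\Gamma(H)$ that are not cyclic permutations of one another. Equivalently, via the Remark after Proposition~\ref{6}, I would examine the product-graph $\Gamma(H)\times\Gamma(H)$ and check that every cyclically reduced closed path read simultaneously from two basepoints forces the two cycles to coincide up to cyclic rotation. The bulk of the work is combinatorial: identify $Type(\Gamma(H))$, enumerate the cyclically reduced cycles up to the relevant length, and verify the uniqueness-up-to-rotation property.

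The main obstacle will be reconciling the two parts, since they pull in opposite directions: the absence of property (S) (hence the Stallings-graph rigidity in Proposition~\ref{6}) must coexist with the failure of ACEP. The delicate point is that ACEP can fail for reasons strictly weaker than (S), and the proof of Proposition~\ref{3} used (S) essentially; so for the ACEP-failure half I cannot simply reuse that argument and must instead argue more carefully at the level of the normal subgroups $N_n$ directly, controlling $\langle\langle N_n\rangle\rangle_F \cap H$ using the $F$-normal closure computations (Lemma~\ref{2} for the length bound, Theorem~\ref{11}(a) and (c) for the conjugacy obstructions). Getting the Stallings graph exactly right and then certifying the product-graph condition, while simultaneously producing the separating family $N_n$, is where the real effort lies.
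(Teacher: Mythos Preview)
Your plan for the ``not an S-subgroup'' half is fine and matches the paper: build $\Gamma(H)$ and invoke Proposition~\ref{6}. The gap is in the ACEP-failure half.

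You correctly observe that the argument of Proposition~\ref{3} is unavailable, but your proposed replacement---``separate $\langle\langle u^n\rangle\rangle_H$ from $\langle\langle w^n\rangle\rangle_H$ while keeping their $F$-normal closures equal,'' with $u=w^a$---is still the S-subgroup mechanism in disguise. If $H$ is not an S-subgroup, then \emph{every} $w\in H$ satisfies $w^F\cap H=w^H$; so whenever $u=w^a\in H$ you automatically have $u=w^b$ for some $b\in H$, and then $\langle\langle u^n\rangle\rangle_H=\langle\langle w^n\rangle\rangle_H$. Theorem~\ref{11}(a),(c) cannot separate two $H$-normal closures that are literally equal. Thus no family $N_n$ built from a single cyclic $H$-normal closure can witness ACEP failure here.

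The paper's route is genuinely different. Writing $w_1=a^4$, $w_2=a^2ba$, $w_3=aca^2$, $w_4=bc^{-1}$ (a free basis of $H$), one takes
\[
N=\langle\langle [w_1^n,w_2],\,[w_1^n,w_3]\rangle\rangle_H .
\]
The point is the identity $w_4=a^{-2}w_2\,a\,w_3^{-1}\,a$ in $F$ together with $[w_1,a]=1$: modulo $\langle\langle N\rangle\rangle_F$ the element $w_1^n$ commutes with $w_2,w_3$ and $a$, hence with $w_4$, so $[w_1^n,w_4]\in\langle\langle N\rangle\rangle_F$. On the other hand, $H/N$ is analyzed as an amalgamated-product/HNN-type quotient with an explicit normal form, from which one reads off both $\gamma(N)\geq\lfloor (n-1)/2\rfloor$ and $[w_1^n,w_4]\notin N$. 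So the separating element is a commutator involving the fourth generator, and the obstruction lives in a noncyclic quotient; none of this is visible from the cyclic-normal-closure template you sketched.
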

%%1
\begin{proof}
First of all, using the Stallings graph $\Gamma (H)$  and Lemma 6.1 \cite{3} we see 
that $H$ is freely generated by
$$
w_1=a^4,\; w_2=a^2ba,\; w_3=aca^2,\; w_4=bc^{-1}.
$$
Let $N=\left\langle \left\langle [w_1^n,w_2],[w_1^n,w_3]\right\rangle \right\rangle _H$, 
$n\in \mathbb{Z}$ and $[\cdot,\cdot]$ is a commutator. 
One can check that $\gamma(N)\geq \lfloor\frac{n-1}{2}\rfloor$ and 
$[w_1^n,w_4]\not\in N$. Indeed,
$$
H/N=(\langle u,w_2,w_3\;|\;[u,w_2],\;[u,w_3]\rangle\mathop{*}\limits_{u=w_1^n}\langle w_1\rangle)*\langle w_4\rangle,
$$
Consider the image of $\langle w_1,w_2,w_3\rangle$ in $H/N$. 
The elements of $H/N$ have the following normal form: 
$$
w_1^{rn}p_1(w_2,w_3)w_1^{\alpha_1}\dots,
$$
where $p_i\in \ls{w_2,w_3}$ and 
$-\lfloor\frac{n-1}{2}\rfloor\leq\alpha\leq\lfloor\frac{n}{2}\rfloor $, see \cite{9}.
It follows that any word in $w_1,\dots,w_4$ of length less 
than $\lfloor\frac{n-1}{2}\rfloor$ 
is in the normal form and does not belong to $N$. Also, it is easy to check that 
$[w_1^n,w_4]=w_1^nw_4w_1^{-n}w_4^{-1}\not\in N$.   
But
$[w_1^n,w_4]\in \left\langle \left\langle N\right\rangle \right\rangle _F$. Indeed, 
$w_4=a^{-2}w_2aw_3^{-1}a$ and $[w_1,a]=1$ by definitions of $w_i$.

\begin{figure}[H]
	\centering
	\includegraphics[scale=0.5]{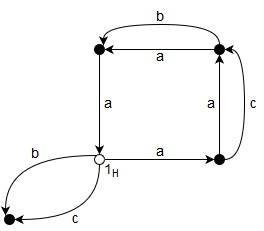}
	\caption{$\Gamma(H)$}
         \label{fig1}
\end{figure}

Using the graph $\Gamma(H)$ we can see that there are no cyclically reduced paths $P(v,v,h)$, $P(v^{\prime },v^{\prime },h) \in \Gamma (H)$ such that $P(v,v,h)$ is not a cyclic permutation of $P(v^{\prime },v^{\prime },h)$. Then by Proposition \ref{6} $H$ is not an $S$-subgroup.
\end{proof}

\section{Proof of Theorem \ref{th_acep21}}\label{sec_proof}

In this section we prove  Theorem~\ref{1} by means of the small cancellation 
theory (see \cite{2}) with the use of Stallings's folded graphs. 
Let $H$ be an f.g.s. of a free group $F$ and $N\triangleleft H$.
We start with the construction of derivation diagrams for 
$u\in\langle\langle N\rangle\rangle_F$.
\subsection{Derivation diagram for $u\in\langle\langle N\rangle\rangle_F$}

\begin{definition}
A morphism from an $X$-digraph $\Gamma _1$ to an $X$-graph $\Gamma _2$ is a map $\pi$ 
from the set of vertices and edges of $\Gamma _1$ to the set of vertices and edges of 
$\Gamma _2$ sending vertices to vertices, edges to edges, preserving
labels ($l(e)=l(\pi(e))$) and extremes of edges ($o(\pi (e))=\pi (o(e))$, 
$t(\pi (e))=\pi (t(e))$).
\end{definition}

\begin{definition}
Let $E.\Gamma$ be the set of edges of $\Gamma$.
The in-star of a vertex $v$ in a graph $\Gamma$ is 
the set $instar(v)=\{e\in E.\Gamma\;|\; v=t(e)\}$.
The out-star of a vertex $v$ in a graph $\Gamma$ is the set
$outstar(v)=\{e\in E.\Gamma\;|\; v=o(e)\}$
\end{definition}

\begin{definition}
 A graph $\Gamma _1$ is a covering graph of a graph $\Gamma _2$ if there is a 
covering map from $\Gamma _1$ to $\Gamma _2$. A covering map 
$f:\Gamma _1 \rightarrow \Gamma _2$ is an $X$-digraph  epimorphism such that 
the in-star and out-star of each vertex $v$ of $\Gamma _1$ is mapped bijectively 
onto the in-star and out-star of its image $f(v)$, respectively.  
\end{definition}
Notice that a covering graph for a folded graph is folded.
\begin{proposition}\label{8}
Let $H<F,\; N\vartriangleleft H$, and $\Gamma (H)$ be the corresponding 
Stallings folded graph for $H$. There is a connected $X$-digraph $\Gamma (N)$ that  is a covering of $\Gamma (H)$ 
with a covering map $f:\Gamma(N)\to \Gamma(H)$ satisfying the following property:
\begin{description}
\item[*)] for every $v\in f^{-1}(1_H)$ the group $N$ (as a set of reduced words) coincides with the labels of
reduced paths from $v$ to $v$.  
\end{description}  
Property {\bf*)} uniquely (up to  isomorphism) determines a connected covering of $\Gamma(H)$.
\end{proposition}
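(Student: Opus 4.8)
The plan is to construct $\Gamma(N)$ explicitly as a quotient of the \emph{Schreier-type} covering associated to the normal subgroup $N$, and then verify that it covers $\Gamma(H)$ and enjoys property \textbf{*)}. First I would recall that the Stallings graph $\Gamma(H)$ is, up to the tail, the quotient $H\backslash T_H$ of a suitable tree, where $\pi_1(\Gamma(H),1_H)=H$ under the canonical identification of reduced cycles at $1_H$ with reduced words in $H$. Since $N\triangleleft H$, the quotient group $Q=H/N$ acts on the covering of $\Gamma(H)$ corresponding to $N$. Concretely, I would define $V.\Gamma(N)=V.\Gamma(H)\times Q$ with edges lifted label-by-label: for each edge $e$ of $\Gamma(H)$ from $o(e)$ to $t(e)$ with label $x$, and each element $g\in Q$, put an edge $(o(e),g)\to(t(e),g\cdot\overline{l(e)})$ in $\Gamma(N)$ labeled $x$, where $\overline{l(e)}$ denotes the image in $Q$ of the generator read along $e$. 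The covering map $f$ is the projection onto the first coordinate.

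The key verifications are then routine but must be laid out in order. First, $\Gamma(N)$ is folded and connected: foldedness follows because $\Gamma(H)$ is folded and the second coordinate is determined by the label, so no two edges at a vertex share a label; connectedness follows because $Q$ is generated by the images of reduced paths in $\Gamma(H)$ based at $1_H$ (i.e.\ by $H$), so one can reach $(1_H,g)$ from $(1_H,1)$ for every $g$ by reading a word mapping to $g$. Second, $f$ is a covering map in the stated sense: the in-star and out-star of $(v,g)$ map bijectively onto those of $v$, because the label of a lifted edge equals the label downstairs and the second coordinate of the lift is forced, giving a unique lift of each incident edge. Third, and most importantly, I would check property \textbf{*)}: a reduced path from $(1_H,g)$ to $(1_H,g)$ has label $w$ precisely when $w\in H$ (it projects to a cycle at $1_H$) \emph{and} the second coordinate returns, i.e.\ $g\cdot\overline{w}=g$, equivalently $\overline{w}=1$ in $Q$, equivalently $w\in N$. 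Thus the reduced cycle labels at any $v\in f^{-1}(1_H)$ are exactly $N$, as required. The same argument runs with $g$ arbitrary, covering all vertices in the fiber.

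For uniqueness up to isomorphism, I would invoke the standard covering-space dictionary: connected coverings of $\Gamma(H)$ correspond bijectively to conjugacy classes of subgroups of $\pi_1(\Gamma(H),1_H)=H$, with the pointed covering $(f,v)$ corresponding to $f_*\pi_1(\Gamma(N),v)$. Property \textbf{*)} says exactly that $f_*\pi_1(\Gamma(N),v)=N$ for every $v$ in the fiber; since $N\triangleleft H$ all these subgroups coincide (not merely up to conjugacy), pinning down the isomorphism type of the pointed covering and hence, forgetting the basepoint, the covering itself. I expect the main obstacle to be bookkeeping rather than conceptual difficulty: one must be careful that $\Gamma(H)$ may carry a tail (so $1_H$ can have degree one), and that ``reduced paths'' in the folded setting correspond bijectively to reduced words, so that the label map is the clean group homomorphism $\overline{(\,\cdot\,)}:H\to Q$ used above. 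Once the identification of reduced cycles with elements of $H$ is made precise via the folded structure (as set up before Definition~\ref{def_Stallings}), every step reduces to the elementary theory of coverings of graphs.
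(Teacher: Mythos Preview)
Your overall strategy --- build the regular covering with deck group $Q=H/N$ on the vertex set $V.\Gamma(H)\times Q$ and read off property \textbf{*)} from the second coordinate --- is exactly the right one, and is essentially what the paper does. But your explicit edge rule has a genuine gap: you write the lifted edge as $(o(e),g)\to(t(e),g\cdot\overline{l(e)})$, where $\overline{l(e)}$ is ``the image in $Q$ of the generator read along $e$''. The label $l(e)$ lies in $X\subset F$, not in $H$, so it has no image in $Q=H/N$; the map $\overline{(\cdot)}$ you later invoke is only defined on $H$. As written, the construction is not well defined, and your later verification that ``the second coordinate returns iff $\overline{w}=1$'' silently switches from the ill-defined $\overline{l(e)}$ to the legitimate $\overline{w}$ for $w\in H$.

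The fix is precisely the ingredient the paper supplies: choose a spanning tree $T$ of $\Gamma(H)$, set $t(e,g)=(t(e),g)$ for tree edges, and $t(e,g)=(t(e),g\bar h_e)$ for non-tree edges, where $h_e\in H$ is the Schreier generator $l\bigl(P(o(e))\,e\,P^{-1}(t(e))\bigr)$. This makes the twist live in $Q$ as required, and then your foldedness, connectedness, and property-\textbf{*)} verifications go through verbatim (a reduced cycle at $(1_H,g)$ with label $w\in H$ now changes the second coordinate by $\overline{w}$, since the tree contributions cancel and only the Schreier generators accumulate). Your uniqueness argument via the covering-space correspondence is fine.
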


\begin{proof}
We give a precise construction for $\Gamma(N)$ as the covering graph of $\Gamma(H)$ with 
a desk transformation $H/N$. Fix a spanning tree $T$ in $\Gamma(H)$.
For $v\in V.\Gamma(H)$ let $P(v)$ be the reduced path in $T$ from $1_H$ to $v$.
For each edge $e\not\in E.T$ let $h_e$ be the label of the path $P(o(e))eP^{-1}(t(e))$.
The set $\{h_e\;|\;e\in E.\Gamma(H)\setminus E.T\}$ generates $H$ freely, \cite{3}. 
Let $G=H/N$ and $\bar h$ denote the image of $h\in H$ in $G$ by the natural homomorphism
$H\to H/N$. We define $\Gamma(N)$ as follows:
\begin{itemize}
\item $V.\Gamma(N)=V.\Gamma(H)\times G$,
\item $E.\Gamma(N)=E.\Gamma(H)\times G$,
\item $l(e,g)=l(e)$, $o(e,g)=(o(e),g)$, and
$
t(e,g)=\left\{\begin{array}{lll} (t(e),g), & \mbox{if} & e\in T\\
                                 (t(e),g\bar h_e) & \mbox{if} & e\not\in T
              \end{array}\right.   
$
\end{itemize}
It is routine to check that $N$ coincides with labels of the reduced cycles 
of $(1_H,g)$ of $\Gamma(N)$ and that map $f:\Gamma(N)\to \Gamma(H)$, $f(a,g)=a$, is a covering map. Notice that it is not difficult to construct an isomorphism 
between two connected covering graphs of $\Gamma(H)$ with property {\bf *)}. 
\end{proof}

Let $f:\Gamma_1\to \Gamma_2$ be an $X$-digraph morphism. We naturally extend $f$ to the inverses 
of edges: $f(e^{-1})=(f(e))^{-1}$. For any path $Q=e_1,e_2,\dots,e_k$
in $\Gamma_1$ its image $f(Q)=f(e_1),f(e_2),\dots,f(e_k)$ is a path in $\Gamma_2$. 
It is clear by definition that $l(Q)=l(f(Q))$. 
If $P$ is a path in $\Gamma _2$, a lift of $P$ is a path $Q$ in $\Gamma_1$ such that 
$f(Q)=P$.
\begin{lema}
Let $\Gamma_1$, $\Gamma_2$ be graphs and 
$f:\Gamma_1\longrightarrow\Gamma_2$ be a covering map. 
If $P$ is a path in $\Gamma_2$ then
for any $v\in f^{-1}(o(P))$, there is a unique lift of $P$ starting at $v$.
\end{lema}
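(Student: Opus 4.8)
The plan is to prove the standard unique path lifting property for covering maps of graphs by induction on the length of the path $P$, using the local bijectivity of the covering map on in-stars and out-stars at each step to determine the lift one edge at a time.

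First I would fix the path $P = f_1, f_2, \dots, f_k$ in $\Gamma_2$ and a vertex $v \in f^{-1}(o(P)) = f^{-1}(o(f_1))$. The base case is $k = 0$ (the trivial path at $o(P)$), whose lift is forced to be the trivial path at $v$. For the inductive step, suppose the lift of the initial segment $f_1, \dots, f_{i-1}$ has been uniquely determined and ends at a vertex $w$ with $f(w) = t(f_{i-1}) = o(f_i)$. I would then split into two cases according to whether the next edge $f_i$ of $P$ is traversed forwards (an element of $E.\Gamma_2$) or backwards (an inverse edge). If $f_i \in E.\Gamma_2$, then $o(f_i) = f(w)$, so any lifting edge $e$ must satisfy $o(e) = w$, i.e.\ $e \in outstar(w)$; since $f$ maps $outstar(w)$ bijectively onto $outstar(f(w))$ and $f_i \in outstar(f(w))$, there is exactly one such $e$, and I extend the lift by it. If instead $f_i$ is an inverse edge, so $f_i = g^{-1}$ with $t(g) = f(w)$, then the lifting edge must lie in $instar(w)$, and the bijectivity of $f$ on $instar(w)$ again produces a unique lifting edge. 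In either case the extended lift is uniquely determined and ends at a vertex mapping to $t(f_i)$, closing the induction.

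I would conclude by noting that existence and uniqueness are both delivered simultaneously by the induction: at each stage the bijectivity hypothesis guarantees the lifting edge both exists and is unique, so the full lift $Q$ exists and is the only path with $f(Q) = P$ starting at $v$. I should take a little care to state the out-star/in-star dichotomy cleanly, since the paths here run through $E.\Gamma \cup (E.\Gamma)^{-1}$ rather than merely through $E.\Gamma$; extending $f$ to inverse edges via $f(e^{-1}) = (f(e))^{-1}$, as already done in the excerpt, makes the backward case symmetric to the forward one.

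The main obstacle, such as it is, is purely bookkeeping: keeping the forward and backward edge cases aligned with the out-star versus in-star bijection and verifying that the terminus of the newly chosen lifting edge is the correct vertex from which to continue. There is no genuine difficulty, as unique lifting is a formal consequence of the definition of covering map; the only thing to be vigilant about is that $\Gamma_1$ is connected only as needed and that the argument never implicitly assumes $P$ is reduced, since lifts of non-reduced paths are used later in the construction of derivation diagrams.
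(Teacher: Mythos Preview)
Your proposal is correct and is the standard inductive proof of unique path lifting; the paper itself states this lemma without proof, treating it as a well-known fact about covering maps of graphs. There is nothing to compare against.
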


\begin{lema}\label{lm_lev1}
	Let $N\vartriangleleft H$ and 
$f:\Gamma (N)\longrightarrow \Gamma (H)$ be a covering map.
	Then $\forall $ $v\in V.\Gamma (H)$ and $\forall v_1,v_2\in f^{-1}(v)$
	there exist an automorphism $\alpha :\Gamma (N)\longrightarrow \Gamma (N)$ such
	that $\alpha (v_1)=v_2$.
\end{lema}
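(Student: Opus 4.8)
The plan is to exploit the explicit description of $\Gamma(N)$ given in Proposition~\ref{8}, where $V.\Gamma(N)=V.\Gamma(H)\times G$ with $G=H/N$, and to produce the required automorphism $\alpha$ as a deck transformation induced by right multiplication in $G$. Since $v_1,v_2\in f^{-1}(v)$ and $f(a,g)=a$, we have $v_1=(v,g_1)$ and $v_2=(v,g_2)$ for some $g_1,g_2\in G$. The natural candidate is the map $\alpha(a,g)=(a,g_2g_1^{-1}g)$ on vertices, with the analogous definition $\alpha(e,g)=(e,g_2g_1^{-1}g)$ on edges. First I would check that $\alpha(v_1)=(v,g_2g_1^{-1}g_1)=(v,g_2)=v_2$, so the base-point condition holds immediately.

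Next I would verify that $\alpha$ is an $X$-digraph morphism, i.e.\ that it preserves labels and the origin/terminus maps. Writing $c=g_2g_1^{-1}$, label preservation is clear since $l(\alpha(e,g))=l(e,g)=l(e)$. For origins, $\alpha(o(e,g))=\alpha(o(e),g)=(o(e),cg)=o(\alpha(e,g))$. For termini I would split into the two cases from the definition of $t(e,g)$: if $e\in T$ then $t(\alpha(e,g))=t(e,cg)=(t(e),cg)$ and $\alpha(t(e,g))=\alpha(t(e),g)=(t(e),cg)$, which agree; if $e\notin T$ then $t(\alpha(e,g))=t(e,cg)=(t(e),cg\bar h_e)$ and $\alpha(t(e,g))=\alpha(t(e),g\bar h_e)=(t(e),cg\bar h_e)$, which again agree. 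The crucial point making this work is that $c$ multiplies on the \emph{left} while the twisting factor $\bar h_e$ from the covering construction multiplies on the \emph{right}, so the two commute in the sense required for $\alpha$ to respect the terminus map.

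Finally I would argue that $\alpha$ is an automorphism. It is a bijection on both vertices and edges because its inverse is given explicitly by left multiplication by $c^{-1}=g_1g_2^{-1}$, and the verification above applied to $c^{-1}$ shows this inverse is also a morphism. A bijective $X$-digraph morphism whose inverse is a morphism is an automorphism of $\Gamma(N)$, completing the proof. I expect the only delicate point to be the bookkeeping in the terminus check for edges outside the spanning tree $T$; the rest is a routine confirmation that left- and right-multiplication on the group factor do not interfere. Since right multiplication by $\bar h_e$ is what encodes the covering, left multiplication by a fixed element of $G$ automatically descends to a deck transformation, which is exactly the automorphism we need.
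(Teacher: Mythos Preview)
Your proposal is correct and follows exactly the same approach as the paper: the paper's proof consists solely of the one-line formula $\alpha(a,h)=(a,(h_2h_1^{-1})h)$, while you supply the routine verifications (label, origin, terminus, bijectivity) that the paper leaves implicit. The key observation you highlight---that left multiplication by $c$ commutes with the right-multiplication twist by $\bar h_e$---is precisely what makes the formula work, and the paper simply takes this for granted.
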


\begin{proof}
Indeed, $v_1=(v,h_1)$, $v_2=(v,h_2)$,
and $\alpha(a,h)=(a,(h_2h_1^{-1})h)$.    
\end{proof}

A presentation $\ls{X\;|\;R}$ is called symmetrized if $R$ consists of cyclically 
reduced words and for any $w\in R$ all cyclic permutations of $w$ as well as $w^{-1}$
belong to $R$.
A diagram $M$ over the symmetrized presentation $\left\langle X\mid R\right\rangle $ 
is a planar finite cell complex (which we denote, abusing notation, by $M$ as well), 
given with a specific
embedding $M\subseteq \mathbb{R}^{2}$, the following additional data, and satisfying 
the following additional properties:

\begin{enumerate}
\item The complex $M$ is connected and simply connected.

\item Each edge (one-cell) of $M$ is labeled by an arrow and a letter $x\in
X$.

\item Some vertex (zero-cell) which belongs to the topological boundary $\delta M$ of 
$M\subseteq \mathbb{R}^{2}$  is specified as a base-vertex.

\item For each region (two-cell) $D$ of $M$ the label of boundary cycle is in $R$,
(in our notations $l(\delta(D))\in R$). (Notice that $l(\delta(D))$ is defined up to
cyclic permutation, if we do not specify a base vertex of $\delta(D)$.)
\end{enumerate}

Thus the 1-skeleton of $M$ is a finite connected planar graph $M^1$
embedded in $\mathbb{R}^{2}$ and the two-cells of $M$ are precisely the bounded 
complementary regions for this graph. Each geometric edge $e$ has two 
orientations. We chose the positive orientation, such that the label 
of $e$ is in $X$.
With this convention we consider $M^1$ as an $X$-digraph with $E.M^1$ being a
set of positively oriented edges.
For a two-cell $D$ its boundary 
$\delta(D)$ is a closed path $e_1,v_1,e_2,v_2...e_kv_k$ in $M^1$ with edges 
$e_j\in E.M^1\cup (E.M^1)^{-1}$
and vertices $v_j\in V.M^1$.  
A diagram $M$ also has the boundary cycle, denoted by $\delta M$, which is an
edge-path in the graph $M^1$ corresponding to going around $M$ once in
the clockwise direction along the boundary of the unbounded complementary
region of $M^1$, starting and ending at the base-vertex of $M$. The
label of that boundary cycle is a word $w$ in the alphabet $X\cup X^{-1}$
that is called the boundary label of $M$. We call $M$ a derivation diagram for 
$w$. 
It is a lemma of Van Kampen  that $w\in \langle\langle R\rangle\rangle_F$ if and only if a derivation diagram for $w$ exist, see \cite{2} page 237.
As we are interested in $\langle\langle N\rangle\rangle_F$ for $N\triangleleft H<F$ we
chose $R$ to be the set of  labels of all cyclically reduced cycles in $\Gamma(N)$.
This implies that for any region $D\in M$ there is a cycle $C_D=P(v,v,w)$ in $\Gamma(N)$ with
$w=l(\delta(D))$. It is possible that a vertex of $M^1$ appears more than once in  $\delta(D)$. 
We call such a vertex a multiple vertex of $\delta(D)$. 
Also, it is possible that both $e$ and $e^{-1}$ are in $\delta(D)$. We call such 
an edge a double edge of $\delta(D)$.  
A graph formed by multiple vertices and double edges of $\delta(D)$ is called 
the self-boundary of $D$. Notice that a connected component of a self-boundary is
a tree.  
Let $\delta(D)=e_1,v_1,e_2,v_2,\dots,e_k,v_k$ with edges $e_i$ and vertices $v_i$. Changing $\delta(D)$ by 
$\tilde\delta(D)=(1,e_1),(1,v_1),(2,e_2),(2,v_2),\dots,(k,e_k),(k,v_k)$, we get a cycle with different vertices 
and edges with labels $l((j,e_j))=l(e_j)$.  Intuitively,
we think of $\tilde\delta(D)$ as an infinitesimal shift of $\delta(D)$ inside of $D$.
In $\tilde\delta(D)$ there are edges with labels from $X$ (positive edges) and edges with labels from
$X^{-1}$ (negative edges). Changing the negative edges in $\tilde\delta(D)$ by its inverse we obtain an
$X$-digraph $\delta'(D)$. 
By construction there exists a  morphism $\phi_D:\delta'(D)\to \Gamma(N)$.   
\begin{definition} \label{def_diag}
Let $w\in \langle\langle N\rangle\rangle_F$. A derivation $N$-diagram for $w$ is
a diagram $M$ with the following properties:
	\begin{itemize}
		\item 
		$l(\delta (M))=w$, where $\delta (M)$ denotes the boundary of $M$.
		\item For each label of an internal region of $M$ there exists a 
cyclically reduced
cycle in $\Gamma(N)$ with the same label. So, for a region $D$ there exists a 
labeled graph morphism $\phi_D:\delta'(D)\to\Gamma(N)$.
	\end{itemize}
\end{definition}
The discussion above shows that $w\in\langle\langle N\rangle\rangle_F$  iff a derivation $N$-diagram for $w$ exist.
We call a path $P$ in $M^1$ a piece if degrees of $o(P)$ and $t(P)$ are at least $3$ and degrees of
all other vertices of $P$ are $2$. We call a piece $P$ external if $P\in\delta(M)$, otherwise we call it
internal. In what follows we consider internal pieces only, so ``piece'' means ``internal piece'' through the text.  
Let $P$ be a piece. There exist regions  
$D$ and $E$ such that $P\in\delta (D)$ and $P^{-1}\in\delta(E)$. 
There is a natural image $P_D$ (resp. $P_E$) of $P$ in  $\delta'(D)$  (resp. $\delta'(E)$), see fig.\ref{fig2}.
If $E=D$ then $P$ is a path in the self-boundary of $D$. In this case, there are two different paths in 
$\delta'(D)$ corresponding to $P$. 
We say that a piece
$P$ is inessential if there are morphisms 
$\phi_D:\delta'(D)\to\Gamma(N)$ and $\phi_E:\delta'(E)\to\Gamma(N)$ such that 
$f(\phi_D(P_D))=f(\phi_E(P_E))$. (Recall that 
$f:\Gamma(N)\to\Gamma(H)$ is a covering map. 
Also, if $D=E$ we assume that $\phi_D=\phi_E$.)
We say that $P$ is essential if it is not inessential.
For an $N$-diagram $M$ let $r(M)$ be a number of regions.
For $u\in\langle\langle N\rangle\rangle_F$ let 
$r(u)=\min\{r(M)\;|\;\mbox{$M$ is a derivation $N$-diagram for $u$}\}$.
A derivation $N$-diagram $M$ for $u$ is called optimal if $r(M)=r(u)$.

\begin{figure}[H]
	\centering
	\includegraphics[scale=0.5]{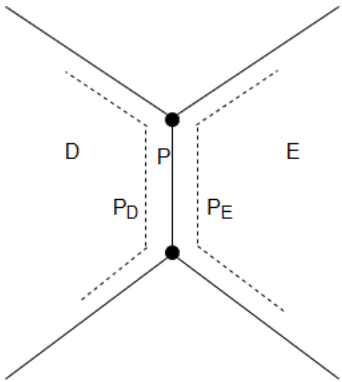}
	\caption{The piece P in the D and E regions}
	\label{fig2}
\end{figure}

\begin{lema}\label{lm_ess_diff}
An inessential piece of an 
optimal $N$-diagram is a path in a self-boundary of a region.  
\end{lema}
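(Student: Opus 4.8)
The plan is to prove the contrapositive by a region‑merging argument: an inessential piece joining two \emph{distinct} regions can be erased, fusing those regions into one and thereby lowering the region count, which is impossible in an optimal diagram. So suppose $P$ is an inessential (internal) piece of an optimal $N$-diagram $M$ and that $P$ is \emph{not} contained in a self-boundary; then the regions $D,E$ with $P\in\delta(D)$ and $P^{-1}\in\delta(E)$ satisfy $D\neq E$. By inessentiality there are morphisms $\phi_D:\delta'(D)\to\Gamma(N)$ and $\phi_E:\delta'(E)\to\Gamma(N)$ with $f(\phi_D(P_D))=f(\phi_E(P_E))$; that is, $\phi_D(P_D)$ and $\phi_E(P_E)$ are two lifts of one and the same path $\bar P$ of $\Gamma(H)$, and in particular their initial vertices lie in a common fiber of $f$.

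The conceptual key is to upgrade this coincidence in $\Gamma(H)$ to an honest coincidence in $\Gamma(N)$. Since $f$ is a covering and, by Lemma~\ref{lm_lev1}, the deck transformations of $\Gamma(N)$ act transitively on each fiber, there is a label-preserving automorphism $\alpha$ of $\Gamma(N)$ carrying the initial vertex of $\phi_E(P_E)$ to that of $\phi_D(P_D)$. Then $\alpha\circ\phi_E$ is again an admissible morphism for $E$ — its image is a cyclically reduced cycle of $\Gamma(N)$ with label $l(\delta(E))$ — so by uniqueness of lifts we may replace $\phi_E$ by $\alpha\circ\phi_E$ and assume $\phi_D(P_D)=\phi_E(P_E)=:\rho$, the \emph{same} path of $\Gamma(N)$. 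Writing $\delta(D)=PA$ and $\delta(E)=P^{-1}B$, this yields cycles $C_D=\rho\,\phi_D(A)$ and $C_E=\rho^{-1}\,\phi_E(B)$ in $\Gamma(N)$ sharing the common segment $\rho$.

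Now I would carry out the geometric reduction. Deleting the interior edges of $P$ fuses $D$ and $E$ into a single two-cell $D'$ whose boundary is $AB$; the corresponding path $\phi_D(A)\,\phi_E(B)$ is a closed path of $\Gamma(N)$ with label $l(\delta(D'))$, so after cyclic reduction it is the label of a cyclically reduced cycle of $\Gamma(N)$, and $D'$ is a legitimate region of a derivation $N$-diagram. Since $P$ is internal, the outer boundary, and hence $u$, is untouched, and deleting an interior edge-path separating two distinct faces merges them while preserving connectedness and simple connectedness. The result is a derivation $N$-diagram $M'$ for $u$ with $r(M')=r(M)-1<r(M)$, contradicting optimality. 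Thus no inessential piece can join two distinct regions, which is precisely the assertion.

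The step I expect to be the main obstacle is verifying that the merged boundary genuinely yields a valid region: the concatenation $l(A)\,l(B)$ may fail to be cyclically reduced because of cancellation at the junction vertices $o(P)$ and $t(P)$. I would handle this by reducing $\phi_D(A)\,\phi_E(B)$ inside the folded graph $\Gamma(N)$ while simultaneously removing the resulting spurs from $M'$; since folding and spur-removal never create new regions — and in the degenerate case where the label reduces to $1$ the two cells cancel outright, dropping the count by two — the strict inequality $r(M')<r(M)$ survives in every case. Lemma~\ref{lm_lev1} is what makes the whole scheme work, as it is exactly what converts the hypothesis ``same image in $\Gamma(H)$'' into the usable ``same path in $\Gamma(N)$''.
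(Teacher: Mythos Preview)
Your proof is correct and follows essentially the same route as the paper: assume $D\neq E$, use Lemma~\ref{lm_lev1} to align $\phi_D(P_D)$ with $\phi_E(P_E)$ via a deck transformation so that the concatenated complement is a genuine cycle in $\Gamma(N)$, merge the two regions, and apply folding to handle any failure of cyclic reduction, contradicting optimality. The only cosmetic difference is that the paper applies the automorphism to $\phi_D$ rather than $\phi_E$, and it is slightly terser about the reduction step; your added remark about the degenerate case where the merged label reduces to the identity is a welcome clarification.
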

\begin{proof}
Let $M$ be a derivation $N$-diagram for $u$. 
Suppose, that a piece $P\subseteq \delta(D)\cap \delta(E)$ is 
inessential and $D\neq E$. We may write $\delta(D)=Q_1P$ and 
$\delta(E)=P^{-1}Q_2$. Let $D'$ be a region, obtained by gluing $D$ and $E$ along
$P$. Construct $M'$ by removing $D\cup E$ and gluing $D'$ along $Q_1Q_2$. 
If $Q_1Q_2$ is not cyclically reduced we may apply folding to $M'$, see 
section 5.1 of \cite{2}. Clearly, the resulting diagram has $r(M)-1$ region.
To get a contradiction it suffices to show that it   
is an $N$-diagram or, the same, that 
in $\Gamma(N)$ there exists a cycle with label $l(Q_1Q_2)$. But 
$f(\phi_D(P))=f(\phi_E(P))$ for some $\phi_D$, $\phi_E$. By Lemma~\ref{lm_lev1} there is an automorphism
$\alpha:\Gamma(N)\to\Gamma(N)$ such that $\alpha(\phi_D(P))=\phi_E(P)$.
So, $\alpha(\phi_D(Q_1))\phi_E(Q_2)$ is a required cycle .  
\end{proof}
Let $P$ be an 
inessential piece in the self-boundary of $D$, i.e. 
$\delta(D)=Q_1PQ_2P^{-1}$. $P$ splits in $\delta'(D)$ in two paths, say 
$P_1$ and $P_2$. It may happen that $\phi_D(P_1)\neq \phi_D(P_2)$ 
for a morphism $\phi_D:\delta'(D)\to\Gamma(N)$.
By definition of an inessential boundary component $f(\phi_D(P_1)=f(\phi_D(P_2)$ 
for some morphism $\phi_D$. (Particularly, it implies that
$f(\phi_D(Q_i))$ are cycles in $\Gamma(H)$.) 

\subsection{Length functions}\label{sub_length}

In this subsection we give a detailed definition of $|\cdot|_H$.
Recall the definition of $|\cdot|_\Omega$ in Subsection~\ref{intr_sub2}.
Recall, as well, that $|\cdot|_H=|\cdot|_{\Omega(H)}$. So, the only thing
we need is a detailed definition of $\Omega(H)$. We start with some properties of
$\Gamma(H)$.   

\begin{proposition}\label{prop_paths}
Let $v_i\in\Gamma(H)$ and $P(1_H,v_i,w_i)$ be reduced paths in $\Gamma(H)$ for some
$w_i\in F$, $i=1,2$. 
Then $w_1w_2^{-1}\in H$ if and only if $v_1=v_2$.
\end{proposition}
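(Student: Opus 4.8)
The plan is to rely on two elementary properties of folded graphs, both recalled in the discussion preceding Definition~\ref{def_Stallings} (see also \cite{3}): first, for a folded graph $\Gamma$, a vertex $v$, and a word $w$, there is \emph{at most one} path $P(v,\cdot,w)$ starting at $v$ with label $w$; second, reducing a path preserves its endpoints and replaces its label by the freely reduced word representing the same element of $F$, so the reduction of a path $P(u,v',w)$ is a path $P(u,v',\bar w)$ where $\bar w$ is the reduced form of $w$. Throughout I identify an element of $H$ with the reduced word representing it and use the defining property of $\Gamma(H)$: the set of labels of reduced cycles at $1_H$ is exactly $H$.

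For the direction $(\Leftarrow)$, suppose $v_1=v_2=:v$. Concatenating $P(1_H,v,w_1)$ with the inverse path $P(v,1_H,w_2^{-1})$ yields a cycle at $1_H$ whose label is the (possibly unreduced) word $w_1w_2^{-1}$. Reducing this cycle gives a reduced cycle at $1_H$ whose label is the reduced form of $w_1w_2^{-1}$; by the defining property of $\Gamma(H)$ this label, hence the element $w_1w_2^{-1}$, lies in $H$.

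For the direction $(\Rightarrow)$, suppose $g:=w_1w_2^{-1}\in H$ and let $\bar g$ be its reduced form, so that there is a reduced cycle $P(1_H,1_H,\bar g)$. Concatenating this cycle with $P(1_H,v_2,w_2)$ produces a path from $1_H$ to $v_2$ labeled by the word $\bar g\,w_2$. Since $g w_2 = w_1$ in $F$ and $w_1$ is reduced, the reduced form of $\bar g\, w_2$ is precisely $w_1$, so the reduction of this path is a reduced path $P(1_H,v_2,w_1)$. On the other hand, $P(1_H,v_1,w_1)$ is a reduced path from $1_H$ carrying the same label $w_1$. By the uniqueness of paths with a given label and starting vertex in a folded graph, the two paths coincide, and in particular their terminal vertices agree, giving $v_1=v_2$.

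The delicate point — and the step I would treat most carefully — is that the words $w_1w_2^{-1}$ and $\bar g\,w_2$ need not be reduced, and a priori need not even label a path in $\Gamma(H)$; this is why the argument goes through the reduction of an explicitly constructed path rather than directly asserting the existence of a cycle with the unreduced label. Once the reduction property is invoked, matching the resulting reduced paths by uniqueness closes both directions immediately.
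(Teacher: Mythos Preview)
Your proof is correct and uses the same two ingredients as the paper's own argument: the defining property of $\Gamma(H)$ (reduced cycles at $1_H$ are exactly $H$) and uniqueness of paths with a given origin and label in a folded graph. The only cosmetic difference is that the paper, for the $\Longrightarrow$ direction, explicitly writes $w_i=w'_i\alpha$ to isolate the common suffix $\alpha$ and then applies uniqueness to $P(v',\cdot,\alpha)$, whereas you invoke the path-reduction property more abstractly and apply uniqueness to the full label $w_1$; the underlying reasoning is the same.
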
 
\begin{proof}
The proof uses the uniqueness in $\Gamma(H)$ of paths with given origin and label.

$\Longrightarrow$ Let $w_1=w'_1\alpha$ and $w_2=w'_2\alpha$ be such that $w'_1(w'_2)^{-1}$ 
and $w'_i\alpha$ are products without cancellation. As $w'_1(w'_2)^{-1}=w_1w_2^{-1}\in H$,
there is a cycle $P(1_H,1_H,w'_1w_2^{\prime-1})=P(1_H,v',w'_1)P(v',1_H,w_2^{\prime-1})$.
Now,  
$$
P(1_H,v_1,w_1)=P(1_H,v',w'_1)P(v',v_1,\alpha) \mbox{ and } 
P(1_H,v_2,w_2)=P(1_H,v',w'_2)P(v',v_2,\alpha).
$$ 
So, $v_1=v_2$ by the uniqueness of the path $P(v',\cdot,\alpha)$.

$\Longleftarrow$ Similar.
\end{proof}

Consider a path $P(1_H,v,w^{-1})$ in $\Gamma(H)$. By Proposition~\ref{prop_paths},
$H^w$ depends on $v$ only. So, for $v\in V.\Gamma(H)$ we define 
$H^v=H^w$ for some $w\in F$ with $P(1_H,v,w^{-1})$. Actually, $H^v$ consists of
labels of the reduced cycles from $v$ to $v$ in $\Gamma(H)$.  
Now, let
$\Omega(H)=\{H^a\cap H^b\;|\;a,b\in V.\Gamma(H) \;\land\;a\neq b\}$.
We define $|w|_H=|w|_{\Omega(H)}$. 

\begin{definition} \label{def_diam} 
We define $diam(\Gamma )=\max \{d(u,v)\;|\;\ u,v\in V.\Gamma,\; d(u,v)<\infty \}$, 
where $d(u,v)$ is 
a graph distance, that is, the length of the shortest path from $u$ to $v$. If 
$P$ is a path in $\Gamma$, we denote by 
$\left\vert P\right\vert$ the length of $P$. Notice that $diam(\Gamma)$ is
finite for a finite graph $\Gamma$ (even disconnected).
\end{definition}

Consider $\Gamma(H)\times \Gamma(H)$, the product of $\Gamma(H)$ in the category of 
$X$-labeled graphs. That is, $V.(\Gamma\times\Gamma)=(V.\Gamma)\times (V.\Gamma)$
and $l^{-1}_{\Gamma\times\Gamma}(x)=l^{-1}_{\Gamma}(x)\times l^{-1}_\Gamma(x)$. Where $l^{-1}_G(x)$
is the set of edges with label $x$ in a graph $G$.  Notice that 
 path $P((v_1,v'_1),(v_2,v'_2),w)$ in $\Gamma\times\Gamma$ defines two paths $P(v_1,v_2,w)$ and
$P(v'_1,v'_2,w)$ in $\Gamma$. It follows that $H^v\cap H^u$ consists of 
the labels of cycles from $(v,u)$ to $(v,u)$ in $\Gamma(H)\times\Gamma(H)$.  
The next proposition follows from the properties of folded graphs.
\begin{proposition}
Suppose, that there is an edge from $(u,v)$ to $(u',v')$ in $\Gamma(H)\times\Gamma(H)$.
Then $u=v$ if and only if $u'=v'$.
\end{proposition}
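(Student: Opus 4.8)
The plan is to exploit the defining property of the product graph $\Gamma(H)\times\Gamma(H)$ together with the fact that $\Gamma(H)$ is folded. Recall that an edge in $\Gamma(H)\times\Gamma(H)$ from $(u,v)$ to $(u',v')$ is, by the definition $l^{-1}_{\Gamma\times\Gamma}(x)=l^{-1}_\Gamma(x)\times l^{-1}_\Gamma(x)$, precisely a pair of edges $e_1,e_2$ in $\Gamma(H)$ carrying a \emph{common} label $x\in X$, with $o(e_1)=u$, $t(e_1)=u'$, $o(e_2)=v$, $t(e_2)=v'$. So the hypothesis gives me edges $e_1=P(u,u',x)$ and $e_2=P(v,v',x)$ in $\Gamma(H)$ sharing the label $x$.

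First I would prove the direction ``$u=v\Rightarrow u'=v'$''. Assuming $u=v$, the two edges $e_1$ and $e_2$ have the same origin and the same label $x$. Since $\Gamma(H)$ is folded, the defining property of folded graphs says there is at most one edge with a given origin and given label (reading labels with the convention that traversing $e^{-1}$ reads $x^{-1}$). Hence $e_1=e_2$, and in particular their termini agree: $u'=t(e_1)=t(e_2)=v'$. I should be slightly careful that an ``edge'' of $\Gamma(H)\times\Gamma(H)$ projecting with label $x\in X$ corresponds to genuine positively-oriented edges, but this is exactly what the product construction records, so no case analysis on orientation is needed.

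The reverse direction ``$u'=v'\Rightarrow u=v$'' is symmetric, using the second folding condition: a folded graph has at most one edge with a given \emph{terminus} and given label. With $u'=v'$, the edges $e_1,e_2$ share terminus and label $x$, so foldedness forces $e_1=e_2$ and therefore $u=o(e_1)=o(e_2)=v$. Thus both implications reduce to the two symmetric clauses in the definition of a folded $X$-digraph.

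The only real subtlety — and the step I would treat as the main obstacle — is bookkeeping on orientation: an edge of the product graph carries a label $x\in X$, but when one thinks of paths one also allows inverse edges reading $x^{-1}$. I would resolve this by working throughout with the positively oriented edges $E.(\Gamma\times\Gamma)$ as defined, where each edge has label genuinely in $X$; then the statement is literally about a single positively oriented edge of the product graph, its two projections are positively oriented edges of $\Gamma(H)$ with equal label in $X$, and the two folding conditions apply directly. Once the orientation conventions are pinned down, the argument is just the two defining clauses of foldedness applied once each, so the proof is short.
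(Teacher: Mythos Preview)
Your argument is correct and is exactly what the paper has in mind: it states only that the proposition ``follows from the properties of folded graphs,'' and your unpacking via the two clauses of the foldedness definition is precisely that. There is nothing to add.
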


So, $\Gamma(H)\times\Gamma(H)$ contains a diagonal connected component
isomorphic to $\Gamma(H)$. Let $\Gamma(H)\dot{\times}\Gamma(H)$ be
$\Gamma(H)\times\Gamma(H)$ with the diagonal component removed. 
Let $C=diam(\Gamma(H)\dot{\times}\Gamma(H))+1$. (Notice that $C$ is finite
by Definition~\ref{def_diam}.)

\begin{lema}\label{lm_length}
Let $w\in F$ and suppose that there exists a path $P(v,u,w)$ in 
$\Gamma(H)\dot{\times}\Gamma(H)$. Then $|w|_H\leq C$
\end{lema}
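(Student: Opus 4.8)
The plan is to reduce the arbitrary path $P(v,u,w)$ to a geodesic in $\Gamma(H)\dot\times\Gamma(H)$ and to absorb the discrepancy into a single factor from $\Omega(H)$. Write $v=(v_1,v_2)$; since $v$ lies in $\Gamma(H)\dot\times\Gamma(H)$ (the diagonal component removed) we have $v_1\neq v_2$, which is the point that will let us invoke $\Omega(H)$.

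First I would choose a shortest path $Q=P(v,u,w_0)$ from $v$ to $u$ inside $\Gamma(H)\dot\times\Gamma(H)$. Such a $Q$ exists because the given $P$ already joins $v$ to $u$, so these vertices lie in one connected component; and by the very definition $C=diam(\Gamma(H)\dot\times\Gamma(H))+1$ its length obeys $|Q|\leq diam(\Gamma(H)\dot\times\Gamma(H))=C-1$. Each edge of $Q$ carries a label in $X\cup X^{-1}$, so $w_0=l(Q)$ is a product of at most $C-1$ such letters, and the definition of $|\cdot|_{\Omega(H)}$ (letters of $X\cup X^{-1}$ are admissible single factors) gives at once $|w_0|_H\leq |Q|\leq C-1$.

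Next I would consider the closed path $PQ^{-1}$, a cycle based at $v=(v_1,v_2)$ with label $w w_0^{-1}$. By the product-graph description established just before the lemma, the labels of cycles from $(v_1,v_2)$ to $(v_1,v_2)$ in $\Gamma(H)\times\Gamma(H)$ are exactly the elements of $H^{v_1}\cap H^{v_2}$, and since $v_1\neq v_2$ this subgroup is a member of $\Omega(H)$. Hence $w w_0^{-1}\in H^{v_1}\cap H^{v_2}\in\Omega(H)$, so it contributes a single factor and $|w w_0^{-1}|_H\leq 1$ (it is $0$ in the degenerate case $w=w_0$, which only helps).

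Finally, using subadditivity of the word metric $|\cdot|_{\Omega(H)}$ and the factorisation $w=(w w_0^{-1})\,w_0$, I would conclude
$$
|w|_H\leq |w w_0^{-1}|_H+|w_0|_H\leq 1+(C-1)=C .
$$
The only non-formal ingredient is the identification of a cycle label at an off-diagonal vertex $(v_1,v_2)$ with an element of the subgroup $H^{v_1}\cap H^{v_2}\in\Omega(H)$; this is supplied verbatim by the description of $H^{v_1}\cap H^{v_2}$ via cycles in $\Gamma(H)\times\Gamma(H)$, so I anticipate no real obstacle beyond this bookkeeping, the rest being the definition of the length function and the finiteness of the diameter.
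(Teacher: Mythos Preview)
Your proof is correct and essentially identical to the paper's: you take a short auxiliary path between $v$ and $u$ in $\Gamma(H)\dot\times\Gamma(H)$, use the resulting cycle at $v=(v_1,v_2)$ to place the discrepancy into $H^{v_1}\cap H^{v_2}\in\Omega(H)$, and bound $|w|_H$ by $1+(C-1)$. The paper does exactly this, merely orienting the auxiliary path from $u$ to $v$ (with label $a$, so $wa\in H^{v_1}\cap H^{v_2}$) rather than using $Q^{-1}$.
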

\begin{proof}
Take $a\in F$, $|a|<C$, such that there is a path 
$P(u,v,a)$ in $\Gamma(H)\dot{\times}\Gamma(H)$. It follows that
$P(v,v,wa)$ exist. So, $wa\in H^{v_1}\cap H^{v_2} \in \Omega(H)$ 
where $v=(v_1,v_2)$,
$v_1\neq v_2$. 
Now, 
$|w|_H=|waa^{-1}|_H\leq 1+|a|\leq C$.
\end{proof}
\begin{corollary}\label{cor_dpath}
Let $w\in F$, $|w|_H>C$. There exists at most one path with label $w$ in $\Gamma(H)$.
\end{corollary}
\begin{corollary}\label{cor_length}
Let $N\triangleleft H$ and $M$ be an $N$-diagram. 
Let $P$ be an essential piece in $M$.
Then $|l(P)|_H\leq C$. 
\end{corollary}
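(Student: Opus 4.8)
The plan is to read the definition of an essential piece as the bare assertion that $l(P)$ labels two \emph{distinct} paths in $\Gamma(H)$, and then feed this into the contrapositive of Lemma~\ref{lm_length} (equivalently Corollary~\ref{cor_dpath}). So the corollary should be an almost immediate unwinding of definitions, with the genuine work having already been absorbed into Lemma~\ref{lm_length} and the product-graph setup.

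First I would set $w=l(P)$ and locate the regions attached to the piece. Since $P$ is an internal piece, there are regions $D$ and $E$ with $P\in\delta(D)$ and $P^{-1}\in\delta(E)$, and by Definition~\ref{def_diag} there exist morphisms $\phi_D\colon\delta'(D)\to\Gamma(N)$ and $\phi_E\colon\delta'(E)\to\Gamma(N)$. Composing with the covering map $f\colon\Gamma(N)\to\Gamma(H)$, and using that morphisms and covering maps preserve labels, the images $f(\phi_D(P_D))$ and $f(\phi_E(P_E))$ are two paths in $\Gamma(H)$ each carrying the label $w$. (In the degenerate case $D=E$ the piece lies in the self-boundary of $D$ and splits into $P_1,P_2$ in $\delta'(D)$; I would run the identical argument with the pair $f(\phi_D(P_1))$, $f(\phi_D(P_2))$, recalling the convention $\phi_D=\phi_E$ there.)

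Essentiality says precisely that for \emph{every} admissible choice of the morphisms these two $\Gamma(H)$-paths are unequal; fixing one such choice therefore yields two genuinely distinct paths in $\Gamma(H)$ sharing the label $w$. Two paths with the same label and the same origin must coincide, by foldedness and the uniqueness of $P(v,\cdot,w)$, so the origins of my two paths differ. By the (unnamed) proposition asserting that an edge of $\Gamma(H)\times\Gamma(H)$ joins off-diagonal vertices exactly when its endpoints are off-diagonal, the pair of paths lifts to a single path $P((a_1,a_2),(b_1,b_2),w)$ lying wholly in $\Gamma(H)\dot{\times}\Gamma(H)$, since its origin $(a_1,a_2)$ is off-diagonal. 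Lemma~\ref{lm_length} then delivers $|w|_H\le C$, i.e. $|l(P)|_H\le C$, which is the claim; alternatively one invokes Corollary~\ref{cor_dpath} directly, as $|w|_H>C$ would permit at most one path with label $w$, contradicting distinctness.

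I expect the only real friction to be bookkeeping rather than mathematics: verifying that $f(\phi_D(P_D))$ truly reads the label $w$ after the shift-and-flip passage from $\delta(D)$ to the $X$-digraph $\delta'(D)$, and treating the self-boundary case $D=E$ on the same footing as the generic case $D\neq E$. Both are routine once one records that \emph{essential} was defined to mean exactly the failure of the two label-$w$ paths to coincide in $\Gamma(H)$, so that the hypothesis of Lemma~\ref{lm_length} is met automatically.
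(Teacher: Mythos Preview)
Your proposal is correct and follows essentially the same approach as the paper: unwind the definition of ``essential'' to obtain two distinct paths in $\Gamma(H)$ with common label $l(P)$, pair them into a path of $\Gamma(H)\dot{\times}\Gamma(H)$, and invoke Lemma~\ref{lm_length}. The paper's proof is a two-line version of yours; your added remarks (why the origins must differ, and the self-boundary case) simply spell out details the paper leaves implicit.
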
 
\begin{proof} 
$f(\phi_D(P))\neq f(\phi_E(P))$ for all $\phi_D,\,\phi_E$ of Definition~\ref{def_diag}. 
Fix such $\phi_D,\;\phi_E$ ($\phi_D=\phi_E$ if $D=E$).
It follows that
in $\Gamma(H)$ there are two different paths with label $l(P)$. 
They define a path in
$\Gamma(H)\dot{\times}\Gamma(H)$ and the proof is concluded by Lemma~\ref{lm_length}.
\end{proof}

\begin{lema}\label{9}
Let $N\vartriangleleft H$. 
If a path $P(v,v,w)$ in $\Gamma (N)$ is cyclically reduced 
then $|w|_H \geq\gamma_H (N)-2diam(\Gamma(H))$.
\end{lema}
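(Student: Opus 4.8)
The plan is to reduce the general case of a cyclically reduced cycle based at an arbitrary vertex $v\in V.\Gamma(N)$ to the special case where the basepoint lies in the fibre $f^{-1}(1_H)$, for which property {\bf *)} of Proposition~\ref{8} identifies the label with an element of $N$ outright. We may assume $w\neq 1$, the case $w=1$ being vacuous. Write $u=f(v)\in V.\Gamma(H)$ and choose a shortest path $Q$ in $\Gamma(H)$ from $1_H$ to $u$; since $\Gamma(H)$ is connected and finite, $Q$ is reduced and its label $a$ satisfies $|a|=|Q|\le diam(\Gamma(H))$.

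Next I would transport $v$ into the fibre over $1_H$. Applying the path-lifting property of the covering $f$ to $Q^{-1}$ with start vertex $v$, I obtain a unique lift from $v$ to some vertex $v_0$; since $f(v_0)=t(Q^{-1})=1_H$, we have $v_0\in f^{-1}(1_H)$, and the reversed lift $\tilde Q$ runs from $v_0$ to $v$ with label $a$. Concatenating yields a closed path $\tilde Q\,P(v,v,w)\,\tilde Q^{-1}$ at $v_0$ whose (a priori unreduced) label is $awa^{-1}$. Folding backtracks in the folded graph $\Gamma(N)$, so its reduction is a reduced cycle at $v_0$ carrying the reduced word representing $awa^{-1}$; property {\bf *)} of Proposition~\ref{8} then gives $awa^{-1}\in N$. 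As $w\neq 1$ forces $awa^{-1}\neq 1$, the definition of $\gamma_H(N)$ yields $\gamma_H(N)\le |awa^{-1}|_H$.

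It remains to compare $|awa^{-1}|_H$ with $|w|_H$. Since $|\cdot|_H=|\cdot|_{\Omega(H)}$ is the word metric with respect to the symmetric generating set $\left(\bigcup\Omega(H)\right)\cup X\cup X^{-1}$, it is subadditive and invariant under inversion, so
$$
|awa^{-1}|_H\le |a|_H+|w|_H+|a|_H=|w|_H+2|a|_H.
$$
Because that generating set contains $X\cup X^{-1}$, every reduced word of length $|a|$ is a product of $|a|$ generators, whence $|a|_H\le |a|\le diam(\Gamma(H))$. Combining the two estimates gives $\gamma_H(N)\le |w|_H+2\,diam(\Gamma(H))$, which rearranges to the asserted inequality $|w|_H\ge \gamma_H(N)-2\,diam(\Gamma(H))$.

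I expect the only delicate point to be the second step: verifying that the lift of $Q^{-1}$ really lands in $f^{-1}(1_H)$ and that conjugating the cycle by the connecting label produces an element of $N$ \emph{itself}, rather than merely a conjugate of $N$. This is exactly what property {\bf *)} of Proposition~\ref{8} guarantees at \emph{every} vertex of the fibre $f^{-1}(1_H)$, so the normality of $N$ in $H$ is already encoded in the covering $\Gamma(N)$ and requires no separate argument; everything else is the triangle inequality for the length function $|\cdot|_H$ together with the elementary bound $|a|\le diam(\Gamma(H))$.
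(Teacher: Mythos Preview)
Your argument is correct and is essentially the same as the paper's: project $v$ to $u=f(v)$, connect $1_H$ to $u$ by a short path in $\Gamma(H)$, lift it to $\Gamma(N)$ so as to hit $v$, conjugate the cycle into the fibre over $1_H$, invoke property {\bf *)}, and finish with the triangle inequality for $|\cdot|_H$. The paper's proof is just a terser version of what you wrote.
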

\begin{proof} 
Let $f:\Gamma(N)\to\Gamma(H)$ be a covering map, $u=f(v)$. There is a path
$P(1_H,u,b)$ in $\Gamma(H)$ with $|b|_H\leq diam(\Gamma(H))$. 
Let $Q$ be the lift of
$P(1_H,u,b)$ with $t(Q)=v$. The cycle $QP_wQ^{-1}$ shows that $bwb^{-1}\in N$, but
$\gamma_H(N)\leq |bwb^{-1}|_H\leq |w|_H+2|b|$.  
\end{proof}

\subsection{Proof of Theorem~\ref{th_acep21}}\label{proof_sub3}

Here we prove a version of Theorem~\ref{th_acep21} with an explicit estimate.
\begin{theorem*}
Let $N\triangleleft H$, 
$C=diam(\Gamma(H)\dot{\times}\Gamma(H))+1$ and $\gamma_H(N)\geq 6C+2diam(\Gamma(H))$.
Then $N=\langle\langle N\rangle\rangle_F\cap H$.
\end{theorem*}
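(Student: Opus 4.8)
We must show that if $N \triangleleft H$ has $\gamma_H(N)$ large enough, then every $u \in \langle\langle N\rangle\rangle_F \cap H$ already lies in $N$. The inclusion $N \subseteq \langle\langle N\rangle\rangle_F \cap H$ is trivial, so the content is the reverse. The natural strategy, given all the machinery set up, is to argue by contradiction using the derivation $N$-diagrams of Definition~\ref{def_diag}: suppose $u \in \langle\langle N\rangle\rangle_F \cap H \setminus N$, take an \emph{optimal} $N$-diagram $M$ for $u$, and derive a small-cancellation-type contradiction from the lower bound $\gamma_H(N) \geq 6C + 2\,diam(\Gamma(H))$.

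**Key steps.**
First I would record the two opposing length estimates that the hypothesis is engineered to combine. By Corollary~\ref{cor_length}, every \emph{essential} piece $P$ of $M$ satisfies $|l(P)|_H \leq C$; this is the ``small cancellation'' input, bounding the overlap between adjacent regions. In the opposite direction, by Lemma~\ref{9}, each region $D$ has a boundary cycle $C_D = P(v,v,w)$ in $\Gamma(N)$ with $|w|_H = |l(\delta(D))|_H \geq \gamma_H(N) - 2\,diam(\Gamma(H))$, so each relator is \emph{long} in the $|\cdot|_H$-metric. The constant $6C$ is exactly what is needed to run a Greendlinger/curvature argument: each region's boundary has $|\cdot|_H$-length at least $6C + \text{(the }diam\text{ correction)}$, while each essential piece contributes at most $C$, so no region can have its boundary covered by fewer than six essential pieces. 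Second, I would use Lemma~\ref{lm_ess_diff} to dispose of inessential pieces: in an optimal diagram every inessential piece lies in the self-boundary of a single region, hence can be pushed into the boundary-reading without creating genuine region adjacencies. This reduces the combinatorics to essential pieces between distinct regions. Third, combining the piece bound with the relator-length bound yields a $C'(1/6)$ small-cancellation condition in the $|\cdot|_H$-metric, and I would invoke Greendlinger's lemma to conclude that any \emph{nonempty} reduced diagram whose boundary reads $u$ must have a boundary region meeting $\delta(M)$ in more than half its $|\cdot|_H$-length. Finally, I would translate this back: such a boundary word forces $u$ to contain a subword that is more than half of some relator, i.e. a long labelled cycle in $\Gamma(N)$; reading that cycle off via the covering $f:\Gamma(N)\to\Gamma(H)$ and Proposition~\ref{8}, together with Corollary~\ref{cor_dpath} (uniqueness of long-$|\cdot|_H$-length paths in $\Gamma(H)$), one shows this subword can be absorbed, reducing the diagram and contradicting minimality of $r(M)$ unless $M$ is empty, i.e. $u \in N$.

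**Main obstacle.**
The delicate point is the bookkeeping between the two length functions: pieces are controlled in the $|\cdot|_H$-metric by Corollary~\ref{cor_length}, but the Greendlinger machinery and the reduction steps live on the diagram $M^1$, whose vertices of degree $\geq 3$ define pieces, while the crucial uniqueness statements (Corollary~\ref{cor_dpath}, Proposition~\ref{prop_paths}) are phrased for $\Gamma(H)$. I expect the hard part to be showing that an essential boundary piece detected by Greendlinger's lemma genuinely lifts to a labelled path in $\Gamma(N)$ that can be used to shorten the diagram, rather than merely to an abstract overlap; this is where the covering map $f$, the self-boundary analysis, and the automorphism lifting of Lemma~\ref{lm_lev1} must all be brought to bear simultaneously, and where the precise constant $6C + 2\,diam(\Gamma(H))$ is consumed. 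The $2\,diam(\Gamma(H))$ summand in particular is there precisely to absorb the discrepancy, in Lemma~\ref{9}, between a cycle based at an arbitrary vertex $v \in \Gamma(N)$ and one based at a preimage of $1_H$, so I would track it carefully through every conjugation $w \mapsto bwb^{-1}$ that the argument performs.
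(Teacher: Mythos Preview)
Your plan is essentially the paper's own argument, and the ingredients you name (Corollary~\ref{cor_length}, Lemma~\ref{9}, Lemma~\ref{lm_ess_diff}, Corollary~\ref{cor_dpath}, and the $(3,6)$/Greendlinger machinery) are exactly the ones used. Two points need correction.

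First, the minimality you want is not ``optimal diagram for a fixed $u$'' but rather: choose $u\in\Sigma=(\langle\langle N\rangle\rangle_F\cap H)\setminus N$ with $r(u)$ minimal over \emph{all} of $\Sigma$, and then an optimal diagram for that $u$. This matters because the Greendlinger step does not produce a smaller diagram for the same $u$; using the long boundary arc $Q\subseteq\delta(D)\cap\delta(M)$ and Corollary~\ref{cor_dpath} one shows that a conjugate of $l(\delta(D))$ lies in $N$, and then Lemma~\ref{7} gives a \emph{different} element $u'=x_1z^{-1}x_2\in\Sigma$ admitting a diagram with one fewer region. The base case $r(u)=1$ also needs its own argument: here $u=awa^{-1}$ with $w\in N$ and $a\notin H$, and Lemma~\ref{lm_path_conj} with Proposition~\ref{prop_paths} forces the cyclic reduction $w^*$ into some $H^{v_x}\cap H^{v_y}$ with $v_x\neq v_y$, so $|w^*|_H=1$ and a short $H$-conjugate of $w^*$ contradicts $\gamma_H(N)$.

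Second, your treatment of inessential pieces is not how the argument runs. You say they ``can be pushed into the boundary-reading,'' reducing the combinatorics to essential pieces; but an inessential piece lies on the self-boundary of a single region $D$ with $\delta(D)=Q_1PQ_2P^{-1}$, and there is no way to discard it from the $(3,6)$ count. Instead its presence is a \emph{separate case} that directly yields a reduction without Greendlinger: the inner curve $Q_1$ bounds a subdiagram $M_1$, and attaching a tail $T$ (label of a path from $1_H$ to $f(\phi_D(o(P)))$) gives $u'=l(TQ_1T^{-1})\in H$. Either $u'\in\Sigma$ with $r(u')<r(u)$, or $u'\in N$, in which case $l(TPQ_2P^{-1}T^{-1})\in N$ as well, so one excises $D\cup M_1$ and glues a single new region along $Q_2$, again lowering the region count. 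The dichotomy is therefore: all pieces essential (run Greendlinger) \emph{or} some piece inessential (run the self-boundary surgery); the two are not combined.
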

Let $\Sigma=(\left\langle \left\langle N\right\rangle \right\rangle _F\cap H)\backslash N$.
In order to prove the theorem it suffices to show that $\Sigma=\emptyset$
under conditions of the theorem.

\begin{lema}\label{7}
Let $N\vartriangleleft H$ and 
$\Sigma =(\left\langle \left\langle N\right\rangle \right\rangle _F\cap H)\backslash N$. 
If $w \in \Sigma$, $\tau \in N$ and $h\in H$ then $\tau w  \in \Sigma$ and 
$w^h\in \Sigma $.
\end{lema}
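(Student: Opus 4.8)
The plan is to verify directly that $\tau w$ and $w^h$ each satisfy the three defining conditions for membership in $\Sigma$: lying in $\ln{N}_F$, lying in $H$, and lying \emph{outside} $N$. Throughout I would exploit that $\ln{N}_F$ is a subgroup of $F$ which is moreover normal in $F$, that $H$ is a subgroup, and crucially that $N$ is normal in $H$ and not merely contained in it.

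First I would treat $\tau w$ with $\tau\in N$ and $w\in\Sigma$. Since $\tau\in N\subseteq\ln{N}_F$ and $w\in\ln{N}_F$, closure of the subgroup $\ln{N}_F$ under products gives $\tau w\in\ln{N}_F$; likewise $\tau\in N\subseteq H$ and $w\in H$ yield $\tau w\in H$. For the last condition I would argue by contradiction: if $\tau w\in N$, then since $\tau^{-1}\in N$ we would obtain $w=\tau^{-1}(\tau w)\in N$, contradicting $w\notin N$. Hence $\tau w\in\Sigma$.

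Next I would treat $w^h=h^{-1}wh$ with $h\in H$ and $w\in\Sigma$. Because $\ln{N}_F\triangleleft F$, it is invariant under conjugation by $h\in H\subseteq F$, so $w^h\in\ln{N}_F$; because $H$ is a subgroup containing both $h$ and $w$, we have $w^h\in H$. Finally, to see $w^h\notin N$, I would again argue by contradiction: if $w^h\in N$, then normality of $N$ in $H$ permits conjugating back by $h^{-1}\in H$, giving $w=(w^h)^{h^{-1}}=h\,w^h\,h^{-1}\in N$, which contradicts $w\notin N$. Hence $w^h\in\Sigma$.

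There is no serious obstacle here; the lemma is a routine structural fact whose proof is pure bookkeeping of the three membership conditions. The only point worth stating carefully is the final step of each case, where the element must be kept outside $N$: for $\tau w$ this uses only that $N$ is a subgroup, whereas for $w^h$ it uses precisely that $N\triangleleft H$, so that conjugation by elements of $H$ preserves $N$. This is exactly the hypothesis that makes the conjugation invariance of $\Sigma$ hold, and it is the one place where $N<H$ alone would not suffice.
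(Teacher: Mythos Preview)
Your proof is correct and follows essentially the same approach as the paper's: directly verifying membership in $\ln{N}_F\cap H$ and then arguing by contradiction that the element lies outside $N$. Your write-up is in fact more explicit than the paper's terse version, and correctly isolates where normality of $N$ in $H$ is actually used.
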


\begin{proof}
As $N<\left\langle \left\langle N\right\rangle \right\rangle _F$ and $N\vartriangleleft H$, $\tau w \in \left\langle \left\langle N\right\rangle \right\rangle _F$ and $\tau w  \in H$. Suppose that $\tau w  \in N$, then $w \in \tau^{-1}N=N$, but $w \notin N$, it's a contradiction.
For the second part, it's clear that $w^h \in \left\langle \left\langle N\right\rangle \right\rangle_F \cap H$. Suppose $w^h\in N$, then $w\in hNh^{-1}=N$, but $w\notin N$, it's a contradiction.
\end{proof}
\begin{proposition}\label{prop_com_red}
Let $w_1,w_2\in F$ and $w_1$ be cyclically reduced. Suppose, that $w_1w_2=w_2w_1$ and there is a cycle 
$P(v,v,w_1)$ in $\Gamma(H)$. Then a path $P(v,v',w_2)$ and a cycle $P(v',v',w_1)$ exist in $\Gamma(H)$.
\end{proposition}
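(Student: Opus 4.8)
The plan is to reduce the whole statement to the structure of centralizers in the free group $F$ and then to a periodicity argument in the folded graph $\Gamma(H)$. First I would dispose of the degenerate cases: if $w_2=1$ take $v'=v$, and I assume throughout that $w_1\neq 1$ (this is the case of interest, the only one for which $w_2$ is forced to label a path). Since $w_1w_2=w_2w_1$ and $w_1\neq 1$, the commuting pair lies in a common maximal cyclic subgroup of $F$, so there exist $y\in F$ and integers $n,m$ with $w_1=y^n$ and $w_2=y^m$; this is exactly the fact used in the proof of Proposition~\ref{6} (the centralizer of a nontrivial element of a free group is cyclic). Because $w_1$ is cyclically reduced I may take $y$ to be its primitive cyclically reduced root, as in Proposition~\ref{6}, so that every power $y^k$ is already a reduced word and the concatenations $y^{k}y^{l}=y^{k+l}$ involve no cancellation. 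Replacing $y$ by $y^{-1}$ if necessary, I arrange $n\geq 1$.

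Next I would exploit foldedness. Since $\Gamma(H)$ is folded, a path with prescribed origin and label is unique, so reading $y$ defines a partial map $\tau$ on $V.\Gamma(H)$ sending a vertex to the terminus of the unique $y$-labelled path from it, whenever one exists. The given cycle $P(v,v,w_1)=P(v,v,y^n)$ then splits into $n$ consecutive $y$-blocks and produces vertices $v=v_0,v_1,\dots,v_n=v$ with $v_{i+1}=\tau(v_i)$. Consequently $\tau$ maps the finite orbit $O=\{v_0,\dots,v_{n-1}\}$ into itself and is surjective onto it (because $\tau(v_{n-1})=v_n=v_0$), hence is a bijection of $O$ with $\tau^n=\mathrm{id}_O$. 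This gives that $\tau^k(v)$ is defined for every $k\in\Z$ (negative powers read $y^{-1}$ along reversed edges, which exist since $\tau$ is invertible on $O$), and that the labelled path $P\bigl(v,\tau^k(v),y^k\bigr)$ exists for all $k\in\Z$.

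Finally I would set $v'=\tau^m(v)$. Reading $y^m$ from $v$ yields the path $P(v,v',y^m)=P(v,v',w_2)$, and since $\tau^n$ and $\tau^m$ commute, $\tau^n(v')=\tau^m\bigl(\tau^n(v)\bigr)=\tau^m(v)=v'$, so reading $y^n$ from $v'$ closes up to the cycle $P(v',v',y^n)=P(v',v',w_1)$, which is the claim. The hard part will be the middle step, namely verifying that ``read $y$'' restricts to a genuine permutation of the orbit with $\tau^n=\mathrm{id}_O$: the vertices $v_0,\dots,v_{n-1}$ need not be distinct, so I cannot simply call $\tau$ an $n$-cycle, and the periodicity must instead be forced by the uniqueness of labelled paths in a folded graph together with the reducedness of $y$. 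I would also be careful to treat negative $m$ via reversed edges and to record the trivial cases $w_1=1$ or $w_2=1$ explicitly.
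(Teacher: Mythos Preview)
Your proposal is correct and follows the same approach as the paper: the paper's proof consists of the single observation that $w_1$ and $w_2$ are powers of a common element and then appeals to uniqueness of labelled paths in a folded graph, leaving all the details implicit. Your orbit argument with the partial map $\tau$ simply makes explicit the periodicity step that the paper waves at, so the two proofs are essentially identical in strategy, yours being a fully unpacked version of theirs.
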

\begin{proof}
Notice that $w_1$ and $w_2$ are powers of the same element. The result follows by uniqueness of a path with
a given origin and label in a folded graph.
\end{proof}
\begin{lema}\label{lm_path_conj}
Let $aw^*a^{-1}\in H$ and $w^*$ be cyclically reduced. Then a path 
$P(1_H,v,a)$ and a cycle $P(v,v,w^*)$ exist in $\Gamma(H)$.
\end{lema}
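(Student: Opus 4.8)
The plan is to work with the reduced form of $aw^*a^{-1}$ and to exploit the uniqueness of paths with a prescribed origin and label in the folded graph $\Gamma(H)$. First I would take $a$ to be the reduced word representing it and isolate the cancellations occurring in the product $a\,w^*\,a^{-1}$. Write $a=ps$, where $s$ is the longest suffix of $a$ that cancels against a prefix of $w^*$, so that $w^*=s^{-1}t$ as a reduced word. Then $aw^*=pt$, and hence $aw^*a^{-1}=p(ts^{-1})p^{-1}$.

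The key observation, which I would establish next, is that $\hat w:=ts^{-1}$ is a cyclic permutation of $w^*=s^{-1}t$. Since $w^*$ is cyclically reduced, every one of its cyclic permutations is reduced, so $\hat w$ is itself cyclically reduced with $|\hat w|=|w^*|$. A short check, using the maximality of $s$ at the left junction and the reducedness of $a=ps$ at the right junction, shows that no further cancellation occurs, so $p\,\hat w\,p^{-1}$ is exactly the reduced word representing $aw^*a^{-1}$. Because $aw^*a^{-1}\in H$, this reduced word labels a reduced cycle from $1_H$ to $1_H$; reading that cycle yields a path $P(1_H,v',p)$, then a cycle $P(v',v',\hat w)=P(v',v',ts^{-1})$, and finally the reverse of $P(1_H,v',p)$.

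From the cycle $P(v',v',ts^{-1})$ I would split off the $t$-part to define a vertex $v$ by $P(v',v,t)$, so that $P(v,v',s^{-1})$ closes the loop; concatenating in the order $s^{-1},t$ then gives the cycle $P(v,v,s^{-1}t)=P(v,v,w^*)$, which is the second assertion of the lemma. It remains to produce the path $P(1_H,v,a)$, and this is the step I expect to be the main obstacle: since $a=ps$ is \emph{not} a prefix of the reduced word $pt$, one cannot simply read $a$ along the cycle just built. The resolution is to reverse $P(v,v',s^{-1})$ to obtain $P(v',v,s)$, and then to concatenate $P(1_H,v',p)$ with $P(v',v,s)$, producing a path labeled $ps=a$ that also terminates at $v$; by uniqueness of labeled paths this is precisely $P(1_H,v,a)$. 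Finally I would verify that the degenerate cases in which $s$ is empty (no cancellation) or $t$ is empty (so $\hat w=w^*$ and $v=v'$) fall under the same conclusion, which completes the argument.
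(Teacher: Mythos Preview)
Your argument is correct when both $s$ and $t$ are nonempty, but the degenerate cases do \emph{not} ``fall under the same conclusion,'' and this is where the real content of the lemma lies. Take $a=x^{-1}y^{-1}x^{-1}$ and $w^*=xy$: then $s=y^{-1}x^{-1}$, $p=x^{-1}$, $t$ is empty, $\hat w=w^*=xy$, and your candidate $p\hat w p^{-1}=x^{-1}\cdot xy\cdot x$ is \emph{not} reduced---in fact $aw^*a^{-1}=yx$. So the path $P(1_H,v',p)$ cannot be read off from the boundary cycle as you claim. Symmetrically, when $s$ is empty your justification for the right junction (``reducedness of $a=ps$'') is vacuous, and cancellation between $w^*$ and $a^{-1}$ may occur: e.g.\ $a=x$, $w^*=yx$ gives $aw^*a^{-1}=xy$, not $x\cdot yx\cdot x^{-1}$. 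Your ``short check'' only uses the junction $s^{-1}\cdot p^{-1}$, which is absent when $s$ is empty.

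The phenomenon behind both failures is that $a$ can differ from the conjugator visible in the reduced form by an element $\beta$ commuting with $w^*$ (a power of the primitive root of $w^*$), and this $\beta$ may be arbitrarily long. The paper's proof absorbs this: it starts from the reduced form $aw^*a^{-1}=a'(\alpha_2\alpha_1)a'^{-1}$ (for a suitable cyclic permutation $\alpha_2\alpha_1$ of $w^*$), reads off $P(1_H,v,a'\alpha_1^{-1})$ and the cycle $P(v,v,w^*)$, then notes $a=a'\alpha_1^{-1}\beta$ with $[\beta,w^*]=1$ and invokes Proposition~\ref{prop_com_red} to slide the endpoint along $\beta$. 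Your approach can be repaired (iterate the reduction, since each pass strictly shortens the conjugator), but as written the claim that $p\hat w p^{-1}$ is reduced is false, and the missing step is precisely this commutation argument.
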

\begin{proof}
If the products in $aw^*a^{-1}$ are without cancellation, then the conclusion of the lemma
is valid by definition of $\Gamma(H)$. Otherwise, there is a presentation 
$w^*=\alpha_1\alpha_2$ and  $aw^*a^{-1}=a'\alpha_2\alpha_1a'^{-1}$ such that
all products in $\alpha_1\alpha_2$ and $a'\alpha_2\alpha_1a'^{-1}$ are without 
cancellation. Then there are the following paths in $\Gamma(H)$:
$$
P(1_H,1_H,a'\alpha_2\alpha_1a'^{-1})=P(1_H,v',a')P(v',v,\alpha_2)P(v,v',\alpha_1)
P^{-1}(1_H,v',a').
$$
So, there is a path $P(1_H,v,a'\alpha_1^{-1})$ as a reduction of $P(1_H,v',a')P^{-1}(v,v',\alpha_1)$ and
a path $P(v,v,w^*)=P(v,v',\alpha_1)P(v',v,\alpha_2)$. Notice that $a=a'\alpha_1^{-1}\beta$ where $\beta \in F$
commute with $w^*$. The lemma follows by Proposition~\ref{prop_com_red}.    
\end{proof}
Suppose that $\Sigma\neq\emptyset$. Choose $u\in\Sigma$ with 
$r(u)=\min\{r(y)\;|\;y\in\Sigma\}$ and an optimal derivation 
$N$-diagram $M$ for $u$.
If $r(u)=1$ then $u=awa^{-1}\in H$, with $u\in H,\;w\in N$ and 
$a\in F\backslash H$. One can write $w=xw^*x^{-1}$ and $u=yw^*y^{-1}$ with
a cyclically reduced $w^*$. By Lemma~\ref{lm_path_conj}, there are paths
$P(1_H,v_x,x)$, $P(1_H,v_y,y)$ and cycles $P(v_x,v_x,w^*)$, $P(v_y,v_y,w^*)$.
Moreover, $v_x\neq v_y$ by Proposition~\ref{prop_paths}. It follows
that $w^*\in H^{v_x}\cap H^{v_y}$ and $|w^*|_H=1$. Take $z\in F$, $|z|\leq diam(\Gamma(H))$, 
such that a path $P(1_H,v_x,z)$ exist in $\Gamma(H)$. Then
$zx^{-1}\in H$ and $w'=zw^*z^{-1}\in N$. But $|w'|_H\leq 1+2|z|<\gamma(N)$ would be a contradiction. 

Therefore, suppose that $r(u)\geq 2$ and $M$ has more than one region.
There are two possibilities: 
\begin{enumerate}
\item All pieces are essential.
\item There is a inessential piece.
\end{enumerate} 
In the first case, by Lemma~\ref{9} and Corollary~\ref{cor_length} 
the number of pieces in $\delta(D)$ for a region $D$ is, at least, $6$ if 
$\delta(D)\cap\delta(M)$ does not contain an edge. So, $M$ is a (3,6)-diagram,
\cite{2}, (chapter 5). 
Theorem 4.3 of \cite{2}
is applicable because, for any region $D$, its boundary $\delta(D)$ does 
not contain vertices of degree $1$. It implies that
there is a region $D$ of $M$ such that 
$\delta (M) \cap\delta (D)$ contains a subpath $Q$ of $\delta(D)$ with 
$$
|l(Q)|_H \geq 3C>C
$$

Let $v_0\in \delta(M)$ be a base vertex of $\delta(M)$ for $u$. 
Let $v_1=o(Q)$, $v_2=t(Q)$. Let $P_D$ be a path in $\delta(D)$, such that 
$QP_D$ is a cycle $\delta(D)$. 
Let $P_1$ and $P_2$ be paths in $\delta(M)$ such that $o(P_1)=v_0$, $t(P_1)=v_1$,
$o(P_2)=v_2$, $t(P_2)=v_0$ and $u=l(P_1)l(Q)l(P_2)$. Denote
$l(P_1)=x_1$, $l(P_2)=x_2$, $l(Q)=y$, $l(P_D)=z$. 
In $\Gamma(N)$ there is a cycle $A$ with the label $yz$, its image $f(A)$ is 
a cycle in $\Gamma(H)$ with the same label. Also in $\Gamma(H)$ there are 
paths $P(1_H,\tilde v_1,x_1)$, $P(\tilde v_1, \tilde v_2, y)$ and $P(\tilde v_2,1_H,x_2)$, 
see figure~\ref{fig3}.
A path in $\Gamma(H)$ with label $y$ is unique by Corollary~\ref{cor_dpath}. So,
$P(\tilde v_1, \tilde v_2,y)$ is a subpath of $f(A)$. It follows that the 
path $P(1_H,\tilde v_1,x_1)$ has a lift $B$ on $\Gamma(N)$ such that $BAB^{-1}$ is a cycle
in $\Gamma(N)$. So, $x_1yzx_1^{-1}\in N$ by Proposition~\ref{8} and 
$u'=x_1z^{-1}x_2=(x_1z^{-1}y^{-1}x_1^{-1})(x_1yx_2)\in\Sigma$ by Lemma~\ref{7}. 
But $u'$ has a derivation $N$-diagram with $r(u)-1$ regions. 
This contradiction with minimality of $r(u)$ refutes the first case.

\begin{figure}[H] 
	\centering
	\includegraphics[scale=0.6]{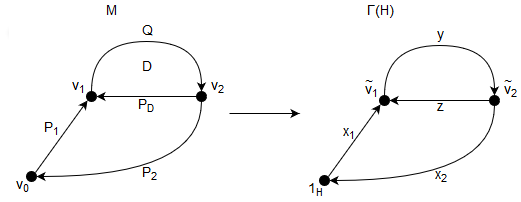}
      \caption{$M$ and $\Gamma (H)$}
      \label{fig3}
\end{figure}

Consider the second case. By Lemma~\ref{lm_ess_diff} an inessential piece 
$P$ is a path in the self-boundary of a region $D$. So, $\delta(D)=Q_1PQ_2P^{-1}$.
$Q_1$ and $Q_2$ define closed curves in $M$. Let $R_1$ and $R_2$ be compact subsets of 
the plane, bounded by $Q_1$ and $Q_2$ respectively. One has  $R_1\subseteq R_2$ or 
$R_2\subseteq R_1$. W.l.g. let $R_1\subseteq R_2$, see figure~\ref{fig4}. 
Denote $M_1=R_1\cap M$.
Notice that $M_1$ is a derivation $N$-diagram for $l(Q_1)$. 
Fix $\phi_D:\delta'(D)\to\Gamma(N)$ of Definition~\ref{def_diag}.
The piece $P$  splits in $P_1$ and $P_2$ in
$\delta'(D)$, see figure~\ref{fig4}.  
$f(\phi_D(P_1))=f(\phi_D(P_2))$ by 
definition of inessential boundary. Let $v=f(\phi_D(o(P)))\in V.\Gamma(H)$.
Take a path ${\cal T}$ from $1_H$ to $v$ in $\Gamma(H)$.  
Add a path (``tail'') $T$ with $l(T)=l({\cal T})$ to
the diagram $M_1$. The resulting diagram $M_1'$ is a derivation $N$-diagram
for $l(TQ_1T^{-1})=u'\in H$. 
There are two possibilities:
\begin{description}
\item{i)} $u'\in\Sigma$
\item{ii)} $u'\in N$.
\end{description} 
In the case {\bf i)} $r(u')<r(u)$ and we get a contradiction with the minimality of  $r(u)$.
Consider case {\bf ii)}.  Notice that $l(TQ_1PQ_2P^{-1}T^{-1})\in N$ and, consequently,   
$l(TPQ_2P^{-1}T^{-1})\in N$.
So, there is
a cycle in $\Gamma(N)$ with label $l(Q_2)$. It follows that we may remove $D$ and $R_1$,
and glue new region $D'$ along $Q_2$. A new diagram $M'$ is a derivation $N$-diagram 
for $u$ with $r(M')\leq r(M)-1$, a contradiction.

\begin{figure}[H]
	\centering
	\includegraphics[scale=0.5]{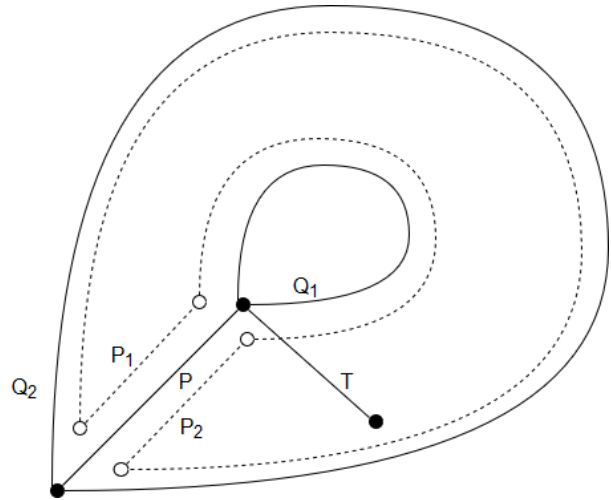}
	\caption{Region with self-boundary}
        \label{fig4}
\end{figure}

\subsection{Another proof of Theorem~\ref{th_acep21}}

Theorem~\ref{th_acep21} may be seen as a particular case of
Theorem 7.15, \cite{13}. For proof, we need the adapted definition of
the distance $\hat d$ of \cite{13} only. We do not state the results of \cite{13},
the interested reader might choose to look at \cite{13}.   

A product $w_1h_1\dots h_{n-1}w_n$ is said
to be alternating if
\begin{description}
\item[I)] $h_i\in H$\;
\item[II)] $w_1h_1\dots w_i\not\in H$ for $i<n$.
\end{description} 
The weight of an alternating product is
$$
\|w_1h_1\dots h_{n-1}w_n\|_H=\sum_{i=1}^n(|w_i|+1)-1
$$
For $h\in H$ define $\hat d(1,h)=\hat d_H(1,h)$ to be the least $\|w_1h_1\dots h_{n-1}w_n\|_H$
over all alternating products $w_1h_1\dots h_{n-1}w_n=h$.
\begin{proposition}\label{prop_lip_equiv}
The length functions $\hat d_H(1,\cdot)$ and $|\cdot|_H$ are Lipshitz equivalent.
\end{proposition}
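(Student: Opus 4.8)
The plan is to establish both inequalities of the Lipschitz equivalence directly from the definitions, comparing the two length functions $\hat d_H(1,\cdot)$ and $|\cdot|_H=|\cdot|_{\Omega(H)}$ on an arbitrary $h\in H$. Recall that $|h|_{\Omega(H)}$ counts the minimal number of factors when writing $h$ as a product of single generators $x\in X\cup X^{-1}$ and elements lying in the finitely many subgroups of $\Omega(H)$, whereas $\hat d_H(1,h)$ is the minimal weight of an alternating product $w_1h_1\cdots h_{n-1}w_n$ with $h_i\in H$ and proper-prefix condition \textbf{II)}. The two definitions have rather different flavors---one uses subgroups from $\Omega(H)$ as "free" generators, the other penalizes letters $w_i$ outside $H$ by their free length---so the core task is to interpolate between them.

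First I would prove $|h|_H \leq \beta\,\hat d_H(1,h)$ for a suitable constant $\beta$. Starting from an optimal alternating product $h=w_1h_1\cdots h_{n-1}w_n$ realizing $\hat d_H(1,h)$, I would bound $|h|_H$ factor by factor: each $h_i\in H$ contributes $|h_i|_H\le$ (some constant, since $H\in\Omega(H)$ up to conjugacy when $\mathrm{Norm}(H)\neq H$, or more robustly $|h_i|_H\le|h_i|$ bounded via the generators of $H$)\,---\,actually the cleanest route is to observe each $h_i$ is a product of free generators of $H$, hence $|h_i|_H$ is at most a constant times $|h_i|$, and each $w_i\in X\cup X^{-1}$ contributes at most $|w_i|$ to $|h|_H$ since single letters are admissible factors in $\Omega(H)$. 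Summing and using $\|w_1h_1\cdots w_n\|_H=\sum(|w_i|+1)-1$ gives the bound with $\beta$ depending only on the generating set.

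For the reverse inequality $\hat d_H(1,h)\le\alpha^{-1}|h|_H$, I would start from an optimal $\Omega(H)$-factorization $h=g_1g_2\cdots g_k$ with each $g_j$ either a letter in $X\cup X^{-1}$ or an element of some $H^a\cap H^b\in\Omega(H)$, where $k=|h|_H$. Each $g_j\in H^a\cap H^b$ can be written as $a^{-1}h_j'a$ (or the $b$-version) with $h_j'\in H$, so the letters of $a,a^{-1}$ contribute a bounded number of non-$H$ symbols to an alternating product, while the $H$-factors are absorbed by the $h_i$'s. The key is that the finitely many subgroups in $\Omega(H)$ each have a \emph{bounded} conjugating element $a$ (one may take $a$ with $|a|\le\mathrm{diam}(\Gamma(H))$ using the Stallings graph, as in the proof of Lemma~\ref{lm_length}), so each $\Omega(H)$-factor translates into an alternating subproduct of bounded weight. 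Concatenating these over the $k$ factors yields an alternating product for $h$ of weight at most $\alpha^{-1}k=\alpha^{-1}|h|_H$.

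The main obstacle I anticipate is verifying condition \textbf{II)}, the requirement that no proper prefix $w_1h_1\cdots w_i$ lies in $H$: a naive concatenation of the bounded alternating subproducts need not respect this alternation constraint, since consecutive $w$-letters from adjacent factors, or spurious prefixes landing in $H$, can appear. I would handle this by a normalization step---merging adjacent $H$-factors and collapsing any prefix that falls into $H$ into a single $H$-element---which can only decrease the weight and restores the alternating form, so the weight bound survives. Once both linear bounds are in hand with explicit constants determined by the generators of $H$ and $\mathrm{diam}(\Gamma(H))$, Lipschitz equivalence follows immediately from the definition, and in particular Theorem~\ref{th_acep21} transfers verbatim to the length function $\hat d_H$ by the Remark following its statement.
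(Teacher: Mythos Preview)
Your second inequality (bounding $\hat d_H$ by $|\cdot|_H$) is close to the paper's Lemma~\ref{lm_leq2}; the main difference is that the paper handles condition~\textbf{II)} not by a post-hoc normalization but by exploiting the built-in choice: each factor $h_i\in H^{a_i}\cap H^{a'_i}$ comes with \emph{two} conjugators $a_i,a'_i$ lying in distinct right cosets of $H$, so at each step one of them can always be chosen so that the running prefix stays outside $H$. Your ``merge and collapse'' step is plausible but you should check it carefully: collapsing a prefix in $H$ leaves a product beginning with an $H$-element, and the alternating form requires $w_1\notin H$ when $n>1$.

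The real gap is in your first inequality. You propose to bound
\[
|h|_H \;\le\; \sum_i |w_i|_H + \sum_i |h_i|_H \;\le\; \sum_i |w_i| + (\text{const})\sum_i |h_i|,
\]
and then compare this with the weight $\sum_i(|w_i|+1)-1$. But the weight does \emph{not} involve $|h_i|$ at all: the factors $h_i\in H$ are free in $\hat d_H$, and each $h_i$ can be arbitrarily long while contributing essentially nothing to the weight. So a bound of the form $|h_i|_H\le C|h_i|$ is useless here; you would need $|h_i|_H\le C$ uniformly, which is simply false for general $h_i\in H$ (think of the malnormal case, where $\Omega(H)$ is trivial and $|h_i|_H=|h_i|$).

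What is actually needed---and what the paper does in Lemma~\ref{lm_admissible}---is to track, inductively in $i$, the \emph{reduced} word representing $w_1h_1\cdots w_ih_i$ and to show that the portion of $h_i$ surviving the cancellation has $|\cdot|_H$-length at most $C$. The mechanism is that this surviving subword $\alpha_1$ of $h_i$ can be read in $\Gamma(H)$ in two ways: once as the tail of the admissible prefix of the reduced product, and once as a subpath of the cycle $P(1_H,1_H,h_i)$. Condition~\textbf{II)} (that $w_1h_1\cdots w_i\notin H$) forces these two readings to start at \emph{different} vertices of $\Gamma(H)$, so together they give a path in $\Gamma(H)\dot\times\Gamma(H)$, whence $|\alpha_1|_H\le C$ by Lemma~\ref{lm_length}. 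This is the idea your argument is missing; without it the first inequality does not go through.
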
 
Theorem~7.15 of \cite{13} implies an analogue of our Theorem~\ref{th_acep21},
with the length function $|\cdot|_H$ replaced by the length function $\hat d_H(1,\cdot)$,
see \cite{13} for details. So,  Theorem~\ref{th_acep21} follows by Proposition~\ref{prop_lip_equiv}.
The drawback of this approach is the absence of an estimate for $\gamma_H$.   
\subsubsection{Proof of Proposition~\ref{prop_lip_equiv}}
Let a free group $F$ be  freely generated by $X$.
 Let $w=x_1x_2\dots x_n$ be a reduced word in 
$X\cup X^{-1}$. We call a prefix $x_1x_2\dots x_i$ admissible if $P(1_H,v,x_1\dots x_i)$ exist for some $v\in V.\Gamma(H)$ (such a path is unique).
A prefix $x_1\dots x_i$ is a maximal admissible prefix if it is admissible and
$x_1\dots x_{i+1}$ is not (or $i=n$).

Fix an alternating product $w_1h_1\dots w_{n-1}h_{n-1}w_n$.
Let $a_i\circ b_i=w_1h_1\dots w_ih_i$, where $a_i\circ b_i$ is a product without
cancellation and $a_i$ is a maximal admissible prefix of $a_ib_i$.
\begin{lema}\label{lm_admissible}
$|a_i|_H\leq \sum\limits_{j=1}^{i}(|w_j|+C)$, where $C$ is of 
Lemma~\ref{lm_length}. 
\end{lema}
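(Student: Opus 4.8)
The plan is to prove the bound by induction on $i$, the case $i=0$ (empty product, $a_0=1$) being trivial. Write $g_i=w_1h_1\cdots w_ih_i$ for the reduced word, so that $g_i=a_ib_i$ with $a_i$ the maximal admissible prefix, and observe that $g_i$ is obtained from $g_{i-1}=a_{i-1}b_{i-1}$ by reducing $g_{i-1}w_ih_i$. First I would record the purely combinatorial relation between $a_i$ and $a_{i-1}$. If the cancellation in $g_{i-1}(w_ih_i)$ stops inside $b_{i-1}$, then maximality of $a_{i-1}$ (there is no edge at the terminus $v_{i-1}$ reading the first letter of $b_{i-1}$) forces the maximal admissible prefix of $g_i$ to still be $a_{i-1}$, and there is nothing to prove. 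Otherwise $b_{i-1}$ is consumed and the cancellation eats a (possibly empty) suffix $\sigma$ of $a_{i-1}$; writing $a_{i-1}=a_i'\sigma$ and letting $\tau'$ be the admissible continuation read into the surviving part of $\mathrm{red}(w_ih_i)$, one gets $a_i=a_i'\tau'=a_{i-1}\,\sigma^{-1}\tau'$. By subadditivity of $|\cdot|_H$ this gives $|a_i|_H\le|a_{i-1}|_H+|\rho|_H$ with $\rho=\sigma^{-1}\tau'$ an infix of $\mathrm{red}(w_ih_i)$, so the whole inductive step reduces to the estimate $|\rho|_H\le|w_i|+C$.

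To bound $|\rho|_H$ I would split $\mathrm{red}(w_ih_i)=\mu\nu$ where $|\mu|\le|w_i|$ and $\nu$ is a suffix of $\mathrm{red}(h_i)$ (only the tail of $w_i$ can cancel into the head of $h_i$, so at most $|w_i|$ letters of the reduced word originate from $w_i$). Splitting $\rho=\rho_1\rho_2$ accordingly, with $\rho_1$ in the $\mu$-part and $\rho_2$ in $\nu$, gives $|\rho_1|_H\le|\rho_1|\le|w_i|$. The key point is that $\rho_2$ is a subword of $\mathrm{red}(h_i)$ and $h_i\in H$, so $\rho_2$ is the label of a path running along the $h_i$-cycle based at $1_H$ (which exists by Proposition~\ref{8}); simultaneously $\rho_2$ is a suffix of $\rho$, hence the label of the terminal subpath of the reading of $a_i$ in $\Gamma(H)$. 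Thus $\rho_2$ labels two paths in $\Gamma(H)$, which assemble into a single path $P\big((p,p'),(q,q'),\rho_2\big)$ in $\Gamma(H)\times\Gamma(H)$.

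Now I would invoke the dichotomy coming from the product-graph structure (the proposition preceding Lemma~\ref{lm_length}, that an edge of $\Gamma(H)\times\Gamma(H)$ is diagonal iff its endpoints project diagonally): this path lies entirely in the diagonal component or entirely in $\Gamma(H)\dot{\times}\Gamma(H)$. In the off-diagonal case Lemma~\ref{lm_length} yields $|\rho_2|_H\le C$ at once, whence $|\rho|_H\le|w_i|+C$ and the induction closes. In the diagonal case the two readings of $\rho_2$ literally coincide, so the terminus $v_i$ of the reading of $a_i$ sits on the $h_i$-cycle; but then the letter of $b_i$ immediately following $a_i$ — which is again a letter of that same suffix of $\mathrm{red}(h_i)$ — is readable from $v_i$ along the cycle, contradicting the maximality of the admissible prefix $a_i$. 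Hence the diagonal case cannot occur when $\rho_2$ is nonempty, and the estimate $|\rho_2|_H\le C$ holds in all cases.

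I expect the fussiest part to be the bookkeeping of the reduction $g_{i-1}w_ih_i\to g_i$ — tracking how much of $b_{i-1}$ and of $a_{i-1}$ is cancelled and where the boundary between the $w_i$-letters and the $h_i$-letters falls — and the genuine obstacle to be the diagonal (coincidence) case: one must verify that coincidence of the two readings really forces $v_i$ onto the $h_i$-cycle together with the readability of the next letter of $b_i$, so that maximality of $a_i$ is violated. (Alternatively one can reach the same contradiction via the uniqueness of long-labelled paths, Corollary~\ref{cor_dpath}.) Everything else is the subadditivity and $|\cdot|_H\le|\cdot|$ estimates combined with Lemma~\ref{lm_length}.
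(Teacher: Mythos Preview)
Your overall architecture matches the paper's: induction on $i$, the trivial case when $b_{i-1}$ survives, writing $a_i=a_{i-1}\rho$ and splitting $\rho$ into a $w_i$-part (bounded by $|w_i|$) and an $h_i$-part $\rho_2$ that labels two paths in $\Gamma(H)$, then invoking Lemma~\ref{lm_length} on the off-diagonal component. That is exactly what the paper does.

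The difference lies in how you rule out the diagonal case. The paper argues that if the two readings of $\alpha_1$ start at the same vertex then, by uniqueness, their endpoints coincide, and concatenating the $a_{i+1}$-path with the inverse of the initial segment of the $h_{i+1}$-cycle yields a cycle $P(1_H,1_H,w_1h_1\cdots w_{i+1})$, i.e.\ $w_1h_1\cdots w_{i+1}\in H$, contradicting condition {\bf II)} of the alternating product. You instead appeal to maximality of $a_i$: if the two readings coincide then $v_i$ lies on the $h_i$-cycle and the next letter of $b_i$ is readable.

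Here is the gap: your argument presupposes that $b_i$ is nonempty. If $b_i=\varnothing$ (equivalently $a_i=g_i$), there is no ``next letter'' and maximality gives nothing. This case genuinely occurs and is precisely where condition {\bf II)} is indispensable. Indeed, without {\bf II)} the lemma is false: take $H=\langle x,y\rangle$ as a free factor (so $\Gamma(H)$ is a single vertex and $C=1$), $w_1=1$, $h_1=xyxy$; then $a_1=g_1=xyxy$ with $|a_1|_H=4>1=|w_1|+C$. In your scheme this is exactly the diagonal case with $\rho_2=xyxy$ and $b_1$ empty, and your maximality argument does not fire. When $b_i=\varnothing$ one has $v_i=1_H$ (since $\rho_2$ then runs to the end of the $h_i$-cycle), hence $g_i\in H$, hence $w_1h_1\cdots w_i=g_ih_i^{-1}\in H$, contradicting {\bf II)}; so the fix is short, but you must invoke {\bf II)} somewhere, and your proposal never does.
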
     
\begin{proof}
Induction on $i$.
For $i=0$ there is nothing to prove.

$i\Longrightarrow i+1$. Let $P(1_H,v_i,a_i)$ be a path in $\Gamma(H)$ and
$a_{i+1}\circ b_{i+1}=a_ib_iw_{i+1}h_{i+1}$.
If not all letters of $b_i$ cancel with $w_{i+1}h_{i+1}$ then $a_{i+1}=a_i$ and
proof is concluded by induction. So, let $a_{i+1}=a_i\alpha$ with a path 
$P(v_i,v_{i+1},\alpha)$. (Notice that cancellations in $a_i\alpha$ do not cause 
problems.) 
We may represent $\alpha=\alpha_0\circ\alpha_1$,
where $\alpha_0$ is a subword of $w_{i+1}$ and $\alpha_1$ is a subword of 
$h_{i+1}$. (Some $\alpha_i$ may be empty.) We may decompose 
$P(v_i,v_{i+1},\alpha_0\circ\alpha_1)=
P(v_i,v',\alpha_0)P(v',v_{i+1},\alpha_1)$.
As $\alpha_1$ is a subword of $h_{i+1}$ there is $P(u',u,\alpha_1)$ which is
a subpath of $P(1_H,1_H,h_{i+1})$, i.e.. 
$$
P(1_H,1_H,h_{i+1})=P(1_H,u',h'_{i+1})P(u',u,\alpha_1)
P(u,1_H,h''_{i+1}),
$$
for corresponding representation $h_{i+1}=h'_{i+1}\circ\alpha_1\circ h''_{i+1}$.
We claim that $u'\neq v'$. Indeed, 
$a_{i+1}=w_1h_1...w_ih_iw_{i+1}h'_{i+1}\alpha_1$ and there is a path 
$P(1_H,v_{i+1},a_{i+1})$. If $u'=v'$ then $v_{i+1}=u$ by the uniqueness 
of the path with given origin and label. So, the reduction of a path 
$P(1_H,v_{i+1},a_{i+1})P^{-1}(1_H,u=v_{i+1},h'_{i+1}\alpha_1)$ is 
a path $P(1_H,1_H,w_1h_1\dots w_ih_iw_{i+1})$, a contradiction with
{\bf II)}.
So, $u'\neq v'$ and there is a path in $\Gamma(H)\dot{\times}\Gamma(H)$ with label 
$\alpha_1$. It follows that  
 $|\alpha_1|_H\leq C$ by Lemma~\ref{lm_length}. It
is also clear that $|\alpha_0|\leq |w_{i+1}|$.  
\end{proof}
Taking alternating representation $h=w_1h_1w_2h_2\dots w_n$ for $h\in H$ and substituting $i=n$ in
Lemma~\ref{lm_admissible} we get the following corollary.
\begin{corollary}\label{cor_leq1}
$|h|_H\leq C\hat d_H(1,h)$ for  $h\in H$.
\end{corollary}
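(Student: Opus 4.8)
The plan is to obtain the corollary as a direct reading of Lemma~\ref{lm_admissible}. Given $h\in H$, I would fix an alternating product $h=w_1h_1w_2h_2\cdots w_{n-1}h_{n-1}w_n$ realizing the minimum defining $\hat d_H$, so that $\hat d_H(1,h)=\sum_{i=1}^n(|w_i|+1)-1$. Lemma~\ref{lm_admissible} is stated for prefixes $a_i\circ b_i=w_1h_1\cdots w_ih_i$ ending in an $H$-syllable, so the first step is to put the product into that shape by appending the trivial factor $h_n:=1\in H$. This leaves conditions \textbf{I)} and \textbf{II)} untouched (the new factor lies in $H$, and no new prefix of the form $w_1h_1\cdots w_i$ with $i<n$ is created), so $w_1h_1\cdots w_nh_n$ remains alternating and its reduced word equals $h$.

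The crucial point is that $h\in H$ makes the whole word admissible. Indeed, the reduced word of $h$ labels a cycle $P(1_H,1_H,h)$ in $\Gamma(H)$, and every prefix of a path from $1_H$ is again a path from $1_H$; hence the maximal admissible prefix of $a_nb_n=h$ is the entire word, i.e. $a_n=h$ and $b_n$ is empty. Substituting $i=n$ into Lemma~\ref{lm_admissible} therefore gives
$$
|h|_H=|a_n|_H\leq\sum_{j=1}^{n}\bigl(|w_j|+C\bigr)=\sum_{j=1}^n|w_j|+nC ,
$$
with $C$ the constant of Lemma~\ref{lm_length}.

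Finally I would compare this with the weight. Since $C\geq 1$,
$$
\sum_{j=1}^n|w_j|+nC\leq C\Bigl(\sum_{j=1}^n|w_j|+n\Bigr)=C\bigl(\hat d_H(1,h)+1\bigr),
$$
which yields $|h|_H\leq C\hat d_H(1,h)$ after the harmless adjustment $\hat d_H(1,h)+1\leq 2\hat d_H(1,h)$ for $h\neq 1$ (the case $h=1$ being trivial); any such bounded constant is all that Proposition~\ref{prop_lip_equiv} requires. I expect the only genuinely delicate step to be the bookkeeping at the terminal syllable $w_n$: one must verify that inserting $h_n=1$ and invoking admissibility of the full cycle legitimately forces $a_n=h$, since all the real work---controlling the growth of the maximal admissible prefix across each syllable---is already carried out inside Lemma~\ref{lm_admissible}.
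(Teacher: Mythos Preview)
Your argument is correct and follows exactly the paper's approach: the paper's entire proof is the single sentence preceding the corollary, ``Taking alternating representation $h=w_1h_1w_2h_2\dots w_n$ for $h\in H$ and substituting $i=n$ in Lemma~\ref{lm_admissible} we get the following corollary.'' Your treatment is in fact more careful than the paper's---you make explicit the padding by $h_n=1$, the reason $a_n=h$, and the arithmetic, and you correctly observe that the bound one actually obtains is $|h|_H\le 2C\,\hat d_H(1,h)$ rather than the stated $C$, which is immaterial for Proposition~\ref{prop_lip_equiv}.
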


Consider the other inequality. 
\begin{lema}\label{lm_leq2}
$\hat d_H(1,h)\leq (1+2diam(\Gamma))|h|_H$.
\end{lema}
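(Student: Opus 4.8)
The plan is to prove the reverse Lipschitz bound $\hat d_H(1,h)\leq (1+2\,diam(\Gamma))|h|_H$ by taking an optimal factorization of $h$ witnessing $|h|_H$ and converting each factor into an alternating product whose weight we can control. Recall that $|h|_H=|h|_{\Omega(H)}$ means we may write $h=g_1g_2\cdots g_k$ with $k=|h|_H$, where each $g_i$ lies either in $X\cup X^{-1}$ or in some subgroup $H^{a}\cap H^{b}\in\Omega(H)$ (with $a,b\in V.\Gamma(H)$, $a\neq b$). The idea is to build an alternating product for $h$ by treating these two types of factors differently and bounding the total weight by a constant times $k$.

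First I would handle a single factor $g_i$. If $g_i\in X\cup X^{-1}$, then $g_i=w$ is a single letter and contributes weight $|w|+1=2$ to an alternating product, which is already $O(1)$. The substantive case is when $g_i\in H^{v_1}\cap H^{v_2}$ for vertices $v_1\neq v_2$ of $\Gamma(H)$. Here the key observation is that $g_i$ is a label of a cycle $P(v_1,v_1,g_i)$ in $\Gamma(H)$, and I want to express the conjugating element connecting this to the base vertex $1_H$ using a short path. Concretely, choose a reduced path $P(1_H,v_1,c)$ in $\Gamma(H)$ with $|c|\leq diam(\Gamma(H))$; then $c\,g_i\,c^{-1}\in H$, so I may write $g_i=c^{-1}(c g_i c^{-1})c$. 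The middle term $c g_i c^{-1}$ is an element of $H$, while $c^{-1}$ and $c$ are words of length at most $diam(\Gamma(H))$. This realizes $g_i$ as an alternating product of bounded weight relative to the contribution of $g_i$: the two flanking pieces cost $|c|+1\leq diam(\Gamma)+1$ each, and the central $H$-element costs $1$, giving weight roughly $2\,diam(\Gamma)+3$ per factor, but crucially the $H$-element absorbs into neighboring $H$-pieces when factors are concatenated.

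Next I would concatenate the $k$ factors. The subtlety is that naively stringing together alternating products for each $g_i$ need not be alternating, since consecutive factors in $H$ should be merged into a single $H$-letter rather than counted separately, and condition {\bf II)} ($w_1h_1\cdots w_i\not\in H$) must be respected. The clean bookkeeping is to observe that in the product $h=g_1\cdots g_k$, each $g_i$ contributes at most one ``escape'' out of $H$ (via the letter $c^{-1}$ or a single $x\in X^{\pm1}$) and one ``return'' (via $c$), so the number of $w$-syllables is $O(k)$, and each $w$-syllable has length at most $diam(\Gamma)$. Summing $\sum(|w_j|+1)-1$ then yields a bound of the form $(1+2\,diam(\Gamma))k=(1+2\,diam(\Gamma))|h|_H$, matching the claimed constant. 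The bound on the number and length of $w$-syllables is exactly where the factor $2\,diam(\Gamma)$ enters.

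The main obstacle I expect is the merging step: ensuring that after concatenation the resulting product is genuinely alternating in the sense of {\bf I)} and {\bf II)}, and that the weight is counted correctly without double-counting the $H$-syllables at the seams between consecutive factors. In particular, when two adjacent factors $g_i,g_{i+1}$ both lie in subgroups of $\Omega(H)$, the return path for $g_i$ and the escape path for $g_{i+1}$ together with the intervening $H$-elements must be reorganized so that we do not create a syllable pattern violating alternation, and so that the per-factor cost stays bounded by $1+2\,diam(\Gamma)$ rather than accumulating. Handling the single-letter factors $g_i\in X\cup X^{-1}$ uniformly within this scheme (they have an empty ``body'' in $H$) is a minor but necessary consistency check. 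Once the weight accounting is set up carefully, the inequality follows by summing over the $k$ factors.
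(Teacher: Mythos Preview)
Your overall plan is along the right lines, but there is a genuine gap at exactly the point you flag as ``the main obstacle'': you never explain how to achieve condition \textbf{II)} (that $w'_1h'_1\cdots w'_i\notin H$ for $i<n$), and with only one conjugator per factor this cannot be arranged in general. Concretely, if you write $g_i=c^{-1}(cg_ic^{-1})c$ using a single path $P(1_H,v_1,c)$, then after concatenation the prefix ending just before the $H$-letter $cg_ic^{-1}$ is $g_1\cdots g_{i-1}c^{-1}$, and there is no reason this is outside $H$. Merging adjacent $H$-letters does not fix this, since it changes which prefixes are tested but not the fact that some prefix may land in $H$.

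The missing idea is that each $h_i\in H^{a_i}\cap H^{a'_i}$ gives you \emph{two} conjugators, because by definition of $\Omega(H)$ the vertices $a_i,a'_i$ are distinct, hence $Ha_i\neq Ha'_i$. The paper sets $h'_i=b_ih_ib_i^{-1}$ and $w'_i=b_{i-1}w_ib_i^{-1}$ with $b_0=b_n=1$, and chooses $b_i\in\{a_i,a'_i\}$ inductively. Since $w'_1h'_1\cdots w'_i=w_1h_1\cdots w_i b_i^{-1}$ lies in $H$ iff $w_1h_1\cdots w_i\in Hb_i$, and $Ha_i\neq Ha'_i$, at most one choice of $b_i$ fails \textbf{II)}, so the other succeeds. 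With $|a_i|,|a'_i|\le diam(\Gamma(H))$ one gets $|w'_i|\le|w_i|+2\,diam(\Gamma)$ (only $+diam(\Gamma)$ at the two ends, since $b_0=b_n=1$), and summing gives the stated bound. Your single-conjugator scheme lacks precisely this freedom, so the alternation condition is not guaranteed.
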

\begin{proof}
Let $|h|_H=k$. Then there is a presentation $h=w_1h_1w_2h_2\dots h_{n-1}w_n$ with 
$1\neq h_i\in H^{a_i}\cap H^{a'_i}$ and
$k=\sum\limits_{i=1}^n(|w_i|+1)-1$.
W.l.g. $|a'_i|,\;|a_i|\leq diam(\Gamma(H))$.
Define an alternating product 
$h=w'_1h'_1\dots w'_n$ where $h'_i=b_ih_ib_i^{-1}$ and $w'_i=b_{i-1}w_ib_{i}^{-1}$,
$b_0=b_n=1$. Here $b_i=a_i$ or $a'_i$ is chosen in such a way that 
$w'_1h'_1\dots w'_i\not\in H$ for $i<n$.
Clearly, 
$|w'_{i}|\leq |w_i|+2diam(\Gamma)$. 
\end{proof}
Notice that Corollary~\ref{cor_leq1} and Lemma~\ref{lm_leq2} imply Proposition~\ref{prop_lip_equiv}.
\bibliographystyle{ieeetr}
\bibliography{bib}

\begin{thebibliography}{10}

\bibitem{13}
F.~{Dahmani}, V.~{Guirardel}, and D.~{Osin}, ``{Hyperbolically embedded
  subgroups and rotating families in groups acting on hyperbolic spaces},''
  {\em ArXiv e-prints}, Nov. 2011.

\bibitem{4}
A.~Y. Ol'shanskii, ``The sq-universality of hyperbolic groups,'' {\em Sbornik:
  Mathematics}, vol.~186, no.~8, pp.~1199--1211, 1995.

\bibitem{11}
B.~H. Neumann, M.~P. Murthy, B.~H. Neumann, and B.~H. Neumann, {\em Lectures on
  topics in the theory of infinite groups}, vol.~21.
\newblock Tata Institute of Fundamental Research, 1968.

\bibitem{12}
J.~R. Stallings, ``Surfaces in three-manifolds and non-singular equations in
  groups,'' {\em Mathematische Zeitschrift}, vol.~184, no.~1, pp.~1--17, 1983.

\bibitem{1}
D.~V. Osin, ``Peripheral fillings of relatively hyperbolic groups,'' {\em
  Invent. Math.}, vol.~167, no.~2, pp.~295--326, 2007.

\bibitem{5}
G.~Higman, B.~H. Neumann, and H.~Neumann, ``Embedding theorems for groups,''
  {\em J. London Math. Soc.}, vol.~24, pp.~247--254, 1949.

\bibitem{10}
B.~B. Newman, ``Some results on one-relator groups,'' {\em Bull. Amer. Math.
  Soc.}, vol.~74, pp.~568--571, 1968.

\bibitem{2}
R.~C. Lyndon and P.~E. Schupp, {\em Combinatorial group theory}.
\newblock Classics in Mathematics, Springer-Verlag, Berlin, 2001.
\newblock Reprint of the 1977 edition.

\bibitem{7}
M.-P. Sch{\"u}tzenberger, ``Sur l'\'equation {$a^{2+n}=b^{2+m}c^{2+p}$} dans un
  groupe libre,'' {\em C. R. Acad. Sci. Paris}, vol.~248, pp.~2435--2436, 1959.

\bibitem{8}
R.~C. Lyndon and M.~P. Sch{\"u}tzenberger, ``The equation {$a^{M}=b^{N}c^{P}$}
  in a free group,'' {\em Michigan Math. J.}, vol.~9, pp.~289--298, 1962.

\bibitem{3}
I.~Kapovich and A.~Myasnikov, ``Stallings foldings and subgroups of free
  groups,'' {\em J. Algebra}, vol.~248, no.~2, pp.~608--668, 2002.

\bibitem{6}
F.~Levin, ``Solutions of equations over groups,'' {\em Bull. Amer. Math. Soc.},
  vol.~68, pp.~603--604, 1962.

\bibitem{9}
O.~Bogopolski, {\em Introduction to group theory}.
\newblock EMS Textbooks in Mathematics, European Mathematical Society (EMS),
  Z\"urich, 2008.
\newblock Translated, revised and expanded from the 2002 Russian original.

\end{thebibliography}

\end{document}